\newcommand{\PP}{\mathbb{P}}
\newcommand{\ZZ}{\mathbb{Z}}
\newcommand{\CC}{\mathbb{C}}
\newcommand{\mcC}{\mathcal{C}}
\newcommand{\mcB}{\mathcal{B}}
\newcommand{\fd}{\mathfrak{d}}
\newcommand{\ft}{\mathfrak{t}}
\newcommand{\fS}{\mathfrak{S}}
\newcommand{\mcL}{\mathcal{L}}
\newcommand{\WCC}{\underline{\mcC}}
\newcommand{\WCB}{\underline{\mcB}}
\newcommand{\adper}{\mathfrak{S}_{\mathrm{ad}}}
\newcommand{\pic}{\mathop{\mathrm{Pic}}\nolimits}
\newcommand{\ord}{\mathop{\mathrm{ord}}\nolimits}
\newcommand{\comb}{\mathop{\mathrm{Comb}}\nolimits}
\newcommand{\irr}{\mathop{\mathrm{Irr}}\nolimits}
\newcommand{\id}{\mathrm{Id}}
\newcommand{\lcm}{\mathop{\mathrm{lcm}}\nolimits}
\newcommand{\pgl}{\mathop{\mathrm{PGL}}\nolimits}
\newcommand{\psl}{\mathop{\mathrm{PSL}}\nolimits}
\newcommand{\Sl}{\mathop{\mathrm{SL}}\nolimits}
\newcommand{\gl}{\mathop{\mathrm{GL}}\nolimits}
\newcommand{\weil}[1]{\mathcal{W}_{#1}}
\newcommand{\gr}[1]{\left\langle#1\right\rangle\!}
\DeclareMathOperator{\hess}{Hess}
\newtheorem{thm}{Theorem}[section] 
\newtheorem{thm0}{Theorem}
\newtheorem{lem}[thm]{Lemma}     
\newtheorem{cor}[thm]{Corollary}
\newtheorem{prop}[thm]{Proposition}
\theoremstyle{definition}
\newtheorem{defin}[thm]{Definition}
\theoremstyle{remark}
\newtheorem{rem}[thm]{Remark}
\begin{document}
\title[Torsion divisors of plane curves with maximal flexes]{
Torsion divisors of plane curves with maximal flexes and Zariski pairs
}
\date{\today}

\author[E. Artal]{Enrique Artal Bartolo}
\address{
Departamento de Matem{\'a}ticas, IUMA, Universidad de Zaragoza, C. Pedro Cerbuna 12, 50009 Zaragoza, SPAIN}
\email{artal@unizar.es}

\author[S. Bannai]{Shinzo Bannai}
\address{National Institute of Technology, Ibaraki College, 866 Nakane, Hitachinaka, Ibaraki 312-8508, Japan}
\email{sbannai@ge.ibaraki-ct.ac.jp}

\author[T. Shirane]{Taketo Shirane}
\address{Department of Mathematical Sciences, Faculty of Science and Technology, Toku\-shima University, Tokushima, 770-8502, Japan}
\email{shirane@tokushima-u.ac.jp}

\author[H. Tokunaga]{Hiro-o TOKUNAGA}
\address{Department of Mathematical Sciences, Graduate School of Science,
Tokyo Metropolitan University,
1-1 Minami-Ohsawa, Hachiohji 192-0397 JAPAN}
\email{tokunaga@tmu.ac.jp}

\thanks{First named author is partially supported by MTM2016-76868-C2-2-P and Gobierno de Arag{\'o}n (Grupo de referencia ``{\'A}lgebra y Geometr{\'i}a'') cofunded by Feder 2014-2020 ``Construyendo Europa desde Arag{\'o}n''. Second named author is partially supported by Grant-in-Aid for Scientific Research C (18K03263). Fourth named author is partially supported by Grant-in-Aid for Scientific Research C (17K05205)}

\begin{abstract}
There is a close relationship between the embedded topology of complex plane curves and the (group-theoretic) arithmetic of elliptic curves. 
In a recent paper, we studied the topology of some arrangements of curves which  include a special smooth component,
via the torsion properties induced by the divisors in the special curve
associated to the remaining components, which is an arithmetic property. When this special curve has maximal flexes, there is a natural 
isomorphism between its Jacobian variety and the degree zero part of its Picard group. In this paper we consider curve arrangements which contain a special smooth component with a maximal flex and exploit these properties to obtain
Zariski tuples which show the interplay between topology, geometry and arithmetic.\end{abstract}

\keywords{Plane curve arrangements, Torsion divisors, Splitting numbers, Zariski pairs}
\subjclass[2010]{14H50, 14F45, 14F35}

\maketitle

%

\section*{Introduction}

In this paper we study the relationship between the embedded topology, geometry and  arithmetic  of \textit{plane curves} which are algebraic curves (possibly reducible) in the complex projective plane $\PP^2$.
One important goal in the topological study of plane curves is to understand  the deeper reasons behind the existence of  so-called Zariski pairs (or tuples), i.e., (reducible)  curves sharing the same combinatorics (topology in a regular neighborhood) but with distinct embedded topology in the projective plane. 
The relationship between the embedded topology and the arrangement of singularities (a geometric property) of plane curves was observed  in  many examples; for example \cite{survey}, \cite{degtyarev}, \cite{oka2}, \cite{zariski} and so on. 
The relationship between the embedded topology and torsion divisors (an arithmetic object) was also observed; for example, \cite{artal94}, \cite{bgst} and \cite{bannai-tokunaga2019}. 
These observations are the  motivation of the previous paper \cite{pic0} and this paper, and the aim is to formulate these phenomena and to make the relation between topology, geometry and  arithmetic  more visible. 

In this paper, we give examples of Zariski tuples consisting of a smooth cubic 
 together with some further components. 
The first example of this  type is due to the first named author  who gave a   Zariski pair of curves consisting of a cubic and three inflectional tangents in \cite{artal94}. When we consider the cubic as an elliptic curve having an inflectional point as the zero element, it is well-known that  then  the tangent points of inflectional tangents are  three-torsion points of the elliptic curve. The geometry of these torsion-points play an important role in distinguishing the topology, namely if they are collinear or not. The second, third and fourth 
named authors together with B. Guerville-Ball\'e generalized this example to Zariski pairs of curves consisting of a cubic and 4, 5 or 6 inflectional tangents in \cite{bgst}. 
Another example of such Zariski tuples are considered by the second and fourth named author in \cite{bannai-tokunaga2019}, where $2$-torsion points related to pairs of tangent lines were considered.   

These  results inspired   us to write \cite{pic0}, where arithmetic properties of some arrangement of curves
produced Zariski tuples for which subtle topological properties confirmed their character of Zariski pairs.
These arrangements of curves consisted of a smooth curve $D$  together with curves  $\mathcal{C}_i$.
Each one of the curves $\mathcal{C}_i$  yields  a divisor in $\pic^0(D)$ (by subtracting a multiple
of the divisor generated by a general line from $\mathcal{C}_i|_D$). The torsion properties of these divisors were combined with
the splitting number introduced in \cite{ben-shi} to produce Zariski tuples.
The arguments in \cite{pic0} were affected by the technical difficulties caused by the use
of a divisor induced by a general line.

When the special smooth curve $D$ has a maximal flex, there is another natural way to pass from
$\pic(D)$ to $\pic^0(D)$, which gives us more flexibility to study such arrangements of curves. 
This starting point is the main 
difference between this note and \cite{pic0}.
We can construct candidates of Zariski tuples in a systematical way, 
and give new examples of more general form compared to \cite{pic0} (see \S3). 
For some examples we give  in this paper , 
the techniques of~\cite{pic0} can be used to distinguish the Zariski tuples; 
however in these cases, the arguments become easier if we use the techniques of this note.

In \S\ref{sec:torsion_splitting} and \S\ref{sec:torsion_Zariski} we establish the settings and we prove
the general results that relate topology and  arithmetic  via the splitting numbers when dealing with
\emph{maximal-flex arrangements}, i.e. curve arrangements with a smooth component having a maximal flex point. The main result in \S\ref{sec:torsion_splitting} is Corollary~\ref{cor:torsion}
where sufficient arithmetic conditions are given for maximal-flex arrangement
to give a Zariski tuple. In \S\ref{sec:torsion_Zariski} we warn about some notational differences
between this paper and \cite{pic0} and present refined recipes to use the statements.

In \S\ref{sec:ex} we present new examples of Zariski tuples by applying the techniques of \S\ref{sec:torsion_Zariski}.
Some of these examples are related to the notion of a triangle of a smooth cubic~
$C$; we fix a flex point $O$ to have an elliptic curve $E:=(C,O)$, with a group law 
to simplify the exposition.
An ordinary tangent line to a cubic is related to two points in the cubic: the tangency point and the \emph{residual} point,
which are distinct by the the condition to be ordinary.
A \emph{triangle} is a triple of ordinary tangent lines to a cubic which is constructed as follows: 

Start first with an
ordinary tangent line at $P$ with the residual point $Q$ (we assume that $Q$ is a non-inflectional point); we next take the tangent line at $Q$ with the residual point $R$ (which we also assume to be a non-inflectional point) and take also the 
tangent line at $R$ with the next residual point $S$. We have a triangle $\mcL$ if $S=P$. 
In this case, the points $P, Q,R$ are $9$-torsion points of $E$ and their triples and also their sum all give the same non-zero $3$-torsion point,
called the associated $3$-torsion point $P_{\mcL}$ to the triangle $\mcL$. 
Note that $P_\mcL$ depends on the choice of the fixed inflectional point $O$. 

This dependence can be avoided if we consider the divisor class in $\pic^0(C)$ corresponding to $P_\mcL$. Namely we consider the divisor class 
$\bar{P}_{\mcL}:=3(P-O)=3(Q-O)=3(R-O)=P+Q+R-3O$ which does not depend on the flex~$O$
and its order is~$3$. We call $\bar{P}_{\mcL}$
the \emph{associated $3$-torsion $\pic$-point} to the triangle $\mcL$.

\begin{thm0}\label{thm1}
Let $C$  be a smooth cubic. 
\begin{enumerate}[label=\rm(\arabic{enumi})]
\item\label{thm1-1} 
There exists a Zariski triple consisting of arrangements of the cubic  $C$  
and 
two triangles $\mathcal{L},\mathcal{L}'$ with associated $3$-torsion 
$\pic$-points
 $\bar{P}_\mcL,\bar{P}_{\mcL'}$. 
They are characterized by the following three conditions:

\begin{enumerate}[label=\rm(\alph{enumii})]
\item\label{thm1-1a} $\bar{P}_{\mcL}=\bar{P}_{\mcL'}$.
\item\label{thm1-1b} $\bar{P}_{\mcL'}$ is the double of $\bar{P}_\mcL$.
\item $\bar{P}_\mcL,\bar{P}_{\mcL'}$ generate the $3$-torsion of $\pic^0(C)$.
\end{enumerate}
The realization space for each item is irreducible.

\item\label{thm1-2} There exists a Zariski pair consisting of arrangements with the cubic 
$C$, one 
triangle $\mathcal{L}$ and two inflectional tangent lines $L,L'$, satisfying transversality conditions in the remaining intersections. Let  $T,T'$  be the tangency points for $L,L'$. 
Let $\bar{P}_\mathcal{L}$ be the associated
$3$-torsion Pic-point to $\mathcal{L}$. 
They are characterized by the following two conditions:

\begin{enumerate}[label=\rm(\alph{enumii})]
\item\label{thm1-2a} $\bar{P}_{\mathcal{L}}=T-T'$ or $\bar{P}_{\mathcal{L}}=T'-T$.
\item\label{thm1-2b} $\bar{P}_{\mathcal{L}}\ne\pm(T-T')$.
\end{enumerate}
The realization space for each item is irreducible.

\end{enumerate}

\end{thm0}

The existence of the Zariski triple in \ref{thm1-1} is shown in
Theorem~\ref{thm:main1}. 
The existence of the Zariski pair in \ref{thm1-2} is shown in
Theorem~\ref{thm:main2} 
and Proposition~\ref{prop:existence} 
 by using the results in \S\ref{sec:torsion_splitting}, which cannot be proved by the technique in \cite{pic0} (see Remark~\ref{rem:not-pic0}). 
In both situations, the topological obstructions come from the arguments of \S\ref{sec:torsion_splitting}.

We add some more examples in \S\ref{sec:ex}, namely arrangements
$\mathcal{D}$
 of a cubic $C$ and a \emph{bi-gon} of curves of 
higher degree: a pair of curves 
of
 degree~$d$ each  intersecting $C$ at the same two points
 $P,P'$ 
only, where at each intersection point, one of the curves of degree $d$  intersects  transversely whereas the other curve intersects with multiplicity $3d-1$. In this case, torsion points whose orders are divisors of $3d(3d-2)$ come in to play. 
The elements $I_{\mathcal{D}}:=3(P-O)$, $I'_{\mathcal{D}}:=3(P'-O)$ in
$\pic^0(C)$ do not depend on the particular choice of a flex~$O$.
Explicit Zariski tuples are found for $d=2$.

\begin{thm0}\label{thm2}
Let $C$  be a smooth cubic. 
\begin{enumerate}[label=\rm(\arabic{enumi})]
\item\label{thm2-1} There exists a Zariski pair consisting of arrangements 
$\mathcal{D}$
with the cubic $C$  and 
a pair of conics as above, satisfying transversality conditions in the remaining intersections. Let 
$I_{\mathcal{D}}$, $I'_{\mathcal{D}}$ as above.
They are characterized by the following two conditions:

\begin{enumerate}[label=\rm(\alph{enumii})]
\item $I_{\mathcal{D}},I'_{\mathcal{D}}$ are in the $4$-torsion subgroup of~$\pic(C)$.
\item $I_{\mathcal{D}},I'_{\mathcal{D}}$ are in the $8$-torsion subgroup of~$\pic(C)$ but not in its $4$-torsion subgroup.
\end{enumerate}

\item\label{thm2-2} There exists a Zariski $4$-tuple consisting of arrangements with the cubic  $C$ 
an inflectional tangent line $L$, and 
a pair of conics as above, satisfying transversality conditions in the remaining intersections. Let
$O$ be the inflectional tangency of  $C$ at $L$, $E:=(C,O)$,  and let $P,P'$ be the two intersection points. They are characterized by the following four conditions  in $E$:
\begin{enumerate}[label=\rm(\alph{enumii})]
\item $P,P'$  are of order~$4$.
\item $P,P'$  are of order~$8$.
\item $P,P'$  are of order~$12$.
\item $P,P'$  are of order~$24$.
\end{enumerate}
\end{enumerate}

\end{thm0}

This result is proved in 
Theorems~\ref{thm:curve_3_dd} and~\ref{thm:curve_3_1_dd} 
together with Proposition~\ref{prop:curve_3_dd}. 
Theorems~\ref{thm:curve_3_dd} and \ref{thm:curve_3_1_dd} show difference of the embedded topology of curves in \ref{thm2-1} and \ref{thm2-2} respectively, which will be proved by using technique in \S\ref{sec:torsion_splitting}. 
Note that Theorem~\ref{thm:curve_3_1_dd} cannot be proved by the results in \cite{pic0} (see Remark~\ref{rem:not-pic0_2}). 
Proposition~\ref{prop:curve_3_dd} shows the actual existence of those curves
by using 
a particular case of a cubic (found in \cite{lmfdb}) where rational higher-order torsion
points exist in \emph{small} number fields. Computations have been made with \texttt{Sagemath}~\cite{sagemath}
and can be checked in \texttt{Binder}~\cite{binder}.

In Propositions~\ref{prop:connectivity} and~\ref{prop:connectivity2} we prove that the 
arithmetic conditions distinguishing the elements in the Zariski tuples with triangles
are optimal, in the sense that these conditions define connected realization spaces for these curves.
These connectivity results rely on \S\ref{sec:modular} where we study the connectedness
of moduli spaces of point arrangements on elliptic curves  where the Weil pairing and elliptic modular surfaces play  important roles , see Theorem~\ref{thm:modular}.

For the arrangements of one smooth cubic and a pair of triangles, we show in \S\ref{sec:fg} that
orbifold fundamental groups also distinguish the topology of this Zariski triple.
\texttt{Sagemath} has been used in the computations. In the Appendix~\ref{sec:appendix} we provide 
explicit equations, the code of computations and URLs where these can be checked.

\section{Torsion divisors and splitting numbers}\label{sec:torsion_splitting}

Let us fix some  notation. For an abelian group $G$ and a positive integer $m$ we denote 
$G[m]:=\{g\in G\mid m g=0\}$.

\begin{defin}
A \emph{maximal-flex arrangement} of type $(d_0;d_1,\dots,d_k)$, denoted as $[\WCC]:=(D; \mcC_1, \ldots, \mcC_k)$,  is a decomposition  of a curve  
\[
 \WCC=D+\mathcal{B},\qquad \mathcal{B}=\sum_{j=1}^{k}\mcC_{j},
\]
where $D$ is a smooth curve of degree $d_0$, with at least one maximal tangent $L_O$, i.e. a tangent such that $L_O\cap D$ is a one point set, and $\mcC_j$ $(j=1, \ldots, k)$ are   possibly reducible curves  of degree $d_j$. 
\end{defin}

For a maximal-flex arrangement $[\WCC]$
we set the following notation. 
Let $O\in D$ be the point of tangency of $L_O$. 
We associate a torsion class of $\pic^0(D)$ to each $\mcC_j$ as follows. Let $m_j=\gcd\{(D, \mcC_j)_P \mid P\in D\cap \mcC_j\}$ where $(D, \mcC_j)_P$ is the intersection multiplicity of $D$ and $\mcC_j$ at $P$. Let  $\fd_j[\WCC]$ be the divisor of $D$ given by
\[
\fd_j[\WCC]:=\sum_{P\in D\cap \mcC_j} \dfrac{(D, \mcC_j)_P}{m_j} P.
\]   
Furthermore, let $\ft_j:=\ft_j([\WCC], O)\in \pic^0(D)$ be the divisor class given by
\[
\ft_j=\ft_j([\WCC], O):=\fd_j[\WCC]-\frac{d_0d_j}{m_j}O.
\]
Then, since $m_j\fd_j[\WCC]\sim \mcC_j|_D \sim (d_j L_O)|_D=d_0 d_j O$ we have  $m_j\ft_j\sim 0$, i.e., $\ft_j \in \pic^0(D)[m_j]$. We will abuse notation and use the same symbol to denote a divisor and the divisor class that it represents.
Note that in the case where there are two or more maximal tangents, the class $\ft_j([\WCC], O)$ will depend on the choice of $O$. However, we will show later that the
choice of $O$ will not affect the arguments that we will use.

Now, let  $\bm{d}:=(d_1, \ldots, d_k)$ and  $\Theta_{k}$ be the subset of $\ZZ^{\oplus k}$ defined by
\[
\Theta_{k}:=\left\{ (a_1,\dots,a_k) \ \middle| \ \gcd(a_1,\dots,a_{k})=1
  \right\}. 
\]
For $\bm{a}=(a_1, \ldots, a_k)\in \Theta_k$ define an integer $n_{\bm{a}}$ by  
\[
n_{\bm{a}}=n_{[\WCC], \bm{a}}:=\gcd\left(a_1m_1, \ldots, a_km_k, \sum_{j=1}^k a_jd_j\right)
\]
and furthermore define a divisor class $\tau(\bm{a}):=\tau_{([\WCC], O)}(\bm{a})\in\pic^0(D)$ by 
\[
\tau(\bm{a}):= \tau_{([\WCC], O)}(\bm{a}):=\sum_{j=1}^k \dfrac{a_jm_j}{n_{\bm{a}}} \ft_j. 
\]
Note that since $m_j\ft_j \sim 0$ as divisors on $D$, we have $n_{\bm{a}} \tau(\bm{a})\sim 0$ and $\tau(\bm{a})\in \pic^0(D)[n_{\bm{a}}]$.  Concerning the class $\tau_{([\WCC], O)}(\bm{a})$, we have the following lemma:
\begin{lem}\label{lem:equiv_O}
Let $[\WCC]$ be a maximal-flex arrangement as above. Suppose that $O, O^\prime\in D$ are the tangency points of maximal tangent lines of $D$. Then $\tau_{([\WCC], O)}(\bm{a})\sim\tau_{([\WCC], O^\prime)}(\bm{a})$, 
hence the divisor class $\tau_{([\WCC], O)}(\bm{a})$ does not depend on the choice of $O$
and the notation $\tau(\bm{a})$ makes sense.
\end{lem}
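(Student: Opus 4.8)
The plan is to show that the difference $\tau_{([\WCC],O)}(\bm a)-\tau_{([\WCC],O^\prime)}(\bm a)$ vanishes in $\pic^0(D)$ by reducing everything to the single relation $d_0(O-O^\prime)\sim 0$, which holds because $L_O$ and $L_{O^\prime}$ are both maximal tangents, so $L_O|_D=d_0O$ and $L_{O^\prime}|_D=d_0O^\prime$ are linearly equivalent (both cut out by lines). The point is that the divisors $\fd_j[\WCC]$ do not depend on $O$ at all — only the correction terms $\frac{d_0d_j}{m_j}O$ do — so $\ft_j([\WCC],O)-\ft_j([\WCC],O^\prime)=\frac{d_0d_j}{m_j}(O^\prime-O)$.

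First I would record the basic divisor identity: for each $j$,
\[
\ft_j([\WCC],O)-\ft_j([\WCC],O^\prime)=\frac{d_0d_j}{m_j}\,(O^\prime-O)\ \in\ \pic^0(D).
\]
Next I would compute the analogous difference for $\tau$. Since $n_{\bm a}=n_{[\WCC],\bm a}$ depends only on $\bm a$, $\bm d$ and the $m_j$ (not on $O$), we get
\[
\tau_{([\WCC],O)}(\bm a)-\tau_{([\WCC],O^\prime)}(\bm a)
=\sum_{j=1}^k\frac{a_jm_j}{n_{\bm a}}\bigl(\ft_j([\WCC],O)-\ft_j([\WCC],O^\prime)\bigr)
=\frac{1}{n_{\bm a}}\Bigl(\sum_{j=1}^k a_jd_j\Bigr)d_0\,(O^\prime-O).
\]
Now I would invoke the relation $d_0(O-O^\prime)\sim 0$. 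By definition $n_{\bm a}$ divides $\sum_j a_jd_j$, so $\frac{1}{n_{\bm a}}\sum_j a_jd_j$ is an integer, and the right-hand side is an integer multiple of $d_0(O^\prime-O)\sim 0$, hence zero in $\pic^0(D)$. This gives $\tau_{([\WCC],O)}(\bm a)\sim\tau_{([\WCC],O^\prime)}(\bm a)$.

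The only genuinely delicate point — the ``main obstacle'' — is making sure the coefficient $\frac{1}{n_{\bm a}}\sum_j a_jd_j$ really is an integer; but this is immediate from the definition of $n_{\bm a}$ as a $\gcd$ whose arguments include $\sum_j a_jd_j$. A secondary small check is that each $\frac{d_0d_j}{m_j}$ and each $\frac{a_jm_j}{n_{\bm a}}$ is an integer: the former because $m_j=\gcd\{(D,\mcC_j)_P\}$ divides $\sum_P(D,\mcC_j)_P=d_0d_j$ (Bézout), and the latter because $n_{\bm a}$ divides $a_jm_j$ by construction. With these integrality remarks the computation above is purely formal, so there is no real obstacle beyond bookkeeping; the conceptual content is entirely in the observation that two maximal tangents at $O$ and $O^\prime$ force $d_0O\sim d_0O^\prime$.
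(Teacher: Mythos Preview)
Your proof is correct and follows essentially the same route as the paper: compute the difference $\tau_{([\WCC],O)}(\bm a)-\tau_{([\WCC],O')}(\bm a)$, collapse it to $\frac{1}{n_{\bm a}}\bigl(\sum_j a_jd_j\bigr)d_0(O'-O)$, and conclude using $d_0O\sim d_0O'$ together with $n_{\bm a}\mid\sum_j a_jd_j$. The extra integrality checks you include are implicit in the paper's argument and do no harm.
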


\begin{proof}
Let $O, O^\prime$ be the tangency points of  maximal tangent lines $L_O, L_{O^\prime}$ of $D$. Note that  $d_0O=L_O|_{D}\sim L_{O^\prime}|_{D}=d_0O^\prime$. Then
\begin{align*}
\tau_{([\WCC], O)}(\bm{a})-\tau_{([\WCC], O^\prime)}(\bm{a})&=\sum_{j=1}^k \dfrac{a_jm_j}{n_{\bm{a}}} \ft_j([\WCC], O)-\sum_{j=1}^k \dfrac{a_jm_j}{n_{\bm{a}}} \ft_j([\WCC], O^\prime)\\
&=\sum_{j=1}^k \dfrac{a_jm_j}{n_{\bm{a}}} \Big(\ft_j([\WCC], O)-\ft_j([\WCC], O^\prime)\Big)\\
&=\sum_{j=1}^k \dfrac{a_jm_j}{n_{\bm{a}}} 
\left(\left(\fd_j[\WCC]-\frac{d_0d_j}{m_j}O\right)-\left(\fd_j[\WCC]-\frac{d_0d_j}{m_j}O^\prime\right)\right)\\
&=\frac{1}{n_{\bm{a}}} \sum_{j=1}^ka_j d_j \cdot  d_0 \left(O-O^\prime\right)\sim 0
\end{align*}
since $n_{\bm{a}}$ is a divisor of $\sum_{j=1}^ka_jd_j$ and $d_0(O-O^\prime)\sim 0$. 
Hence $\tau_{([\WCC], O)}(\bm{a})\sim\tau_{([\WCC], O^\prime)}(\bm{a})$.
\end{proof}

Next, we investigate the relation between the order of the torsion divisors $\tau(\bm{a})$ and splitting numbers. 
Let $\phi:X\to\PP^2$ be a cyclic cover of degree $m$ branched along $\mcB$ given by a surjection $\theta:\pi_1(\PP^2\setminus \mcB)\twoheadrightarrow\ZZ/m$, 
where $\ZZ/m$ is the cyclic group of order $m$. 
Let $\mcB_\theta$ be the following divisor on $\PP^2$
\[ \mcB_\theta:=\sum_{i=1}^{m-1} i\,B_i, \]
where $B_i$ is the sum of irreducible components of $\mcB$ whose meridians are sent to $[i]\in\ZZ/m$ by $\theta$. 
Then we call $\phi:X\to\PP^2$ the \textit{$\ZZ/m$-cover of type $\mcB_\theta$}. 
Note that the degree of $\mcB_\theta$ is divisible by $m$ since $\phi$ is of degree $m$. 
Let $C$ be an irreducible curve which is not a component of $\mcB$. The \textit{splitting number} $s_{\phi}(C)$ of $C$ with respect to~$\phi$ is the number of irreducible
components of~$\phi^*C$.

Let $[\WCC]=(D;\WCC_1,\dots,\WCC_k)$ be a maximal-flex arrangement, and put $\WCB:=\sum_{j=1}^k\mcC_j$. For $\bm{a}=(a_1,\dots,a_k)\in\Theta_k$, 
there exists a $\ZZ/{n_{\bm{a}}}$-cover $\phi_{\bm{a}}:X_{\bm{a}}\to \PP^2$ of type 
\[ \WCB_{\bm{a}}=\WCB_{\bm{a}}[\WCC]:=\sum_{j=1}^{k}n_{\bm{a}}\left( \frac{a_j}{n_{\bm{a}}} -\left\lfloor \frac{a_j}{n_{\bm{a}}}\right\rfloor \right) \mcC_{j} \]
since $\deg\WCB_{\bm{a}}\equiv \sum_{j=1}^ka_jd_j\equiv 0\pmod{n_{\bm{a}}}$. 
Let $\theta_{[\WCC],\bm{a}}:\pi_1(\PP^2\setminus\WCB)\to\ZZ/{n_{\bm{a}}}$ be the surjection corresponding to $\phi_{\bm{a}}$. 

\begin{prop}\label{prop:torsion-splitting}
Let $[\WCC]$, $\bm{a}\in\Theta_k$ and  $n_{\bm{a}}$ be as above.  
Then  the following equation holds:
\[ s_{\phi_{\bm{a}}}(D)=\frac{n_{\bm{a}}}{\ord(\tau({\bm{a}}))}. \]
\end{prop}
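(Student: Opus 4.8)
The plan is to identify $\phi_{\bm{a}}^*D$ with a cyclic cover of $D$ itself and then count its components via the order of a torsion line bundle. First I would restrict the covering $\phi_{\bm{a}}\colon X_{\bm{a}}\to\PP^2$ to $D$: since $D$ is not a component of $\WCB$, the preimage $\phi_{\bm{a}}^{-1}(D)$ (away from the branch locus) is an unramified $\ZZ/n_{\bm{a}}$-cover of $D\setminus(D\cap\WCB)$, classified by the composite
\[
\theta_D\colon \pi_1\bigl(D\setminus(D\cap\WCB)\bigr)\longrightarrow \pi_1(\PP^2\setminus\WCB)\xrightarrow{\ \theta_{[\WCC],\bm{a}}\ }\ZZ/n_{\bm{a}}.
\]
The number of components of $\phi_{\bm{a}}^*D$ equals the index $[\ZZ/n_{\bm{a}}:\mathrm{Im}\,\theta_D]$, equivalently $n_{\bm{a}}$ divided by the order of the image subgroup. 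So the key step is to show that the order of $\mathrm{Im}\,\theta_D$ equals $\ord(\tau(\bm{a}))$, i.e.\ that $\theta_D$ corresponds, under the standard identification of index-$e$ subgroups of $\ZZ/n_{\bm{a}}$ with cyclic étale covers of $D$ of degree $e$, to the $\ZZ/n_{\bm{a}}$-torsion line bundle $\tau(\bm{a})$.

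To make this precise I would use the dictionary between cyclic covers and torsion in $\pic^0$. A meridian $\mu_P$ around a point $P\in D\cap\mcC_j$, viewed inside $\pi_1(\PP^2\setminus\WCB)$, is a meridian around $\mcC_j$ raised to the local intersection multiplicity: more precisely $\theta_{[\WCC],\bm{a}}$ sends the meridian of $\mcC_j$ to the class $[a_j]\in\ZZ/n_{\bm{a}}$ (by construction of $\WCB_{\bm{a}}$, since $n_{\bm{a}}(\frac{a_j}{n_{\bm{a}}}-\lfloor\frac{a_j}{n_{\bm{a}}}\rfloor)\equiv a_j$), hence $\theta_D(\mu_P)=(D,\mcC_j)_P\cdot[a_j]$. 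Therefore the cover $\phi_{\bm{a}}^{-1}(D)\to D$ is the one attached, via Kummer theory on $D$, to the divisor class
\[
\frac{1}{n_{\bm{a}}}\sum_j a_j\sum_{P\in D\cap\mcC_j}(D,\mcC_j)_P\,P
=\frac{1}{n_{\bm{a}}}\sum_j a_j m_j\,\fd_j[\WCC]
\]
reduced modulo principal divisors and modulo $n_{\bm{a}}$. Since $m_j\fd_j[\WCC]\sim d_0d_jO$, subtracting the principal divisor $\tfrac{d_0}{n_{\bm{a}}}(\sum_j a_j d_j)\,(\text{pt})$-type correction shows this class is exactly $\sum_j \frac{a_j m_j}{n_{\bm{a}}}\ft_j=\tau(\bm{a})$ in $\pic^0(D)$. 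The degree of the connected étale cover of $D$ determined by an $n$-torsion class is precisely the order of that class, so $\mathrm{Im}\,\theta_D$ has order $\ord(\tau(\bm{a}))$, and $s_{\phi_{\bm{a}}}(D)=n_{\bm{a}}/\ord(\tau(\bm{a}))$.

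The main obstacle I anticipate is the careful bookkeeping of the ramification: $\phi_{\bm{a}}$ is ramified along $\WCB$, and the points of $D\cap\mcC_j$ where $(D,\mcC_j)_P$ is not divisible by $n_{\bm{a}}$ are genuine ramification points of $\phi_{\bm{a}}^{-1}(D)\to D$. One must check that the \emph{number of components} of the (possibly ramified, possibly singular) curve $\phi_{\bm{a}}^*D$ still agrees with the number of components of the unramified cover over $D\setminus(D\cap\WCB)$ — i.e.\ that passing to the completion/normalization does not merge or split components — and that the Kummer-theoretic construction attached to the fractional divisor $\frac{1}{n_{\bm{a}}}\sum a_j m_j \fd_j$ really is the normalization of $D\times_{\PP^2}X_{\bm{a}}$. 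This is a local statement at each $P$, where one compares the local monodromy $(D,\mcC_j)_P[a_j]\in\ZZ/n_{\bm{a}}$ with the local branching of the $\ZZ/n_{\bm{a}}$-cover of $D$ cut out by the divisor; it is routine but requires the identification of meridians in $D$ with (powers of) meridians in $\PP^2$, which I would state as the geometric input and then feed into the Kummer-theory computation above.
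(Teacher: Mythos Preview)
Your argument is correct and reaches the same conclusion, but the route differs from the paper's. The paper does not analyze the monodromy character $\theta_D$ directly; instead it computes, for each $0<\ell\le n_{\bm a}$, the divisor $\ell\tau(\bm a)$ on $D$ and shows (after subtracting suitable principal pieces) that it is linearly equivalent to
\[
\sum_{j}\sum_{P\in\mcC_j\cap D}\ell\Bigl(\tfrac{a_j}{n_{\bm a}}-\Bigl\lfloor\tfrac{a_j}{n_{\bm a}}\Bigr\rfloor\Bigr)(D,\mcC_j)_P\,P
\;-\;\sum_j \ell d_j\Bigl(\tfrac{a_j}{n_{\bm a}}-\Bigl\lfloor\tfrac{a_j}{n_{\bm a}}\Bigr\rfloor\Bigr)L_O|_D,
\]
so that $\ell\tau(\bm a)\sim 0$ holds precisely when $\tfrac{\ell}{n_{\bm a}}(\WCB_{\bm a}|_D)$ is cut out on $D$ by some plane curve (this equivalence being the content of \cite[\S1.2]{pic0}). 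The splitting number is then read off from \cite[Theorem~2.1]{ben-shi}, which says exactly that $s_{\phi_{\bm a}}(D)=n_{\bm a}/\ell_{\min}$ for $\ell_{\min}$ the least such~$\ell$. In other words, the paper outsources both the covering-space bookkeeping and the ramification check you flag as the ``main obstacle'' to those two references.

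Your approach unpacks what those cited results are doing: you compute $\theta_D$ on meridians, invoke the Kummer-theory dictionary between characters of $\pi_1$ of a punctured curve and torsion classes in $\pic^0$, and count components as the index of the image. This is more self-contained and makes the geometric mechanism (local monodromy $=(D,\mcC_j)_P[a_j]$) explicit, at the cost of having to justify the local ramification/normalization statement yourself. The paper's version is shorter because the heavy lifting is already done in \cite{ben-shi} and \cite{pic0}; yours would be preferable in a context where those references are not available.
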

\begin{proof}
Let $\ell$ such that $0<\ell\leq n_{\bm{a}}$. Then, the following equalities and linear equivalences
hold for $\ell\tau(\bm{a})$ as divisors on $D$:
\begin{align*}
\ell\tau(\bm{a})=&
\ell \left( \sum_{j=1}^k\frac{a_jm_j}{n_{\bm{a}}} \left( \sum_{P\in\mcC_j\cap D}\frac{(D,\mcC_j)_P}{m_j}P-\frac{d_0d_j}{m_j}O \right) \right)
\\
\sim &
\ell \left( \sum_{j=1}^k\frac{a_jm_j}{n_{\bm{a}}}\! \left( \sum_{P\in\mcC_j\cap D}\!\!\!\!\frac{(D,\mcC_j)_P}{m_j}P-\frac{d_0d_j}{m_j}O \right)\!\!\right)\!-\!\ell\sum_{j=1}^k\left\lfloor\frac{a_j}{n_{\bm{a}}}\right\rfloor\!\left( \overbrace{\mcC_j|_D-\!d_0d_j O}^{\sim 0} \right)
\\
=&
 \sum_{j=1}^k \left( \sum_{P\in\mcC_j\cap D}\ell\left(\frac{a_j}{n_{\bm{a}}} - \left\lfloor \frac{a_j}{n_{\bm{a}}} \right\rfloor \right)(D,\mcC_j)_P P\right)-\sum_{j=1}^k \ell d_j \left(\frac{a_j}{n_{\bm{a}}} -  \left\lfloor \frac{a_j}{n_{\bm{a}}} \right\rfloor \right) L_O|_{D} ;
\end{align*} 
by the same argument of \cite[Subsection~1.2]{pic0}, we deduce that $\ell\tau(\bm{a})\sim 0$ if and only if there exists a plane curve $C$ such that $C|_D=\frac{\ell}{n_{\bm{a}}}(\WCB_{\bm{a}}|_D)$ as divisors on $D$. 
As $\ord(\tau(\bm{a}))$ is the minimal integer $\ell>0$ with $\ell\tau(\bm{a})\sim0$, 
$s_{\phi_{\bm{a}}}(D)$ is $\frac{n_{\bm{a}}}{\ord(\tau(\bm{a}))}$ by \cite[Theorem~2.1]{ben-shi}. 
\end{proof}

We define a map $\Phi_{[\WCC]}:\Theta_k \to\ZZ$ by 
\[ \Phi_{[\WCC]}(\bm{a}):=s_{\phi_{\bm{a}}}(D) \]
for $\bm{a}\in\Theta_k$.

\begin{defin}
A pair of maximal flex-arrangements $[\WCC_1]$ and $[\WCC_2]$ are \emph{combinatorially equivalent},
\[
\WCC_i=D_i+\WCB_i \quad \text{with} \quad \WCB_i:=\sum_{j=1}^{k}\mcC_{i,j} \quad (i=1,2),\quad O_i\in D_i\text{ maximal flex},
\]
if there exists an admissible equivalence map $\varphi:\comb(\WCC_1)\to\comb(\WCC_2)$ to $([\WCC_1],[\WCC_2])$.
\end{defin}

The notion of combinatorics of arrangements and equivalence maps are defined in \cite[Definition~1.1, 1.3]{pic0}. The combinatorics of an arrangement roughly consists of  the set of  irreducible components, the set of singular points and the sets of branches of singular points. An equivalence map is  a family of bijections between the sets of these data which preserve degrees and  topological types and incidence.  Admissible equivalence maps are defined in \cite[Definition~1.5]{pic0} and
send $D_1$ to $D_2$, respect the decompositions of $\WCB_{1}, \WCB_{2}$ and if the components of $\WCB_1,\WCB_2$ are ordered, they define a \emph{permutation} of $\{1,\dots,k\}$.

For such  a pair we have $n_{[\WCC_1],\bm{a}}=n_{[\WCC_2],\bm{a}}$ for any $\bm{a}\in\Theta_k$, and we can put 
\[
n_{\bm{a}}:=n_{[\WCC_i],\bm{a}},\qquad \tau_i(\bm{a}):=\tau_{([\WCC_i],O_i)}(\bm{a}).
\] 
Recall that a homeomorphism $h:(\PP^2,\WCC_1)\to(\PP^2,\WCC_2)$ induces an equivalence map $\varphi_h:\comb(\WCC_1)\to\comb(\WCC_2)$. 

\begin{defin}
Let $[\WCC_1],[\WCC_2]$ be maximal-flex arrangements. We say that a homeomorphism $h:(\PP^2,\WCC_1)\to(\PP^2,\WCC_2)$ is \textit{admissible} to $([\WCC_1],[\WCC_2])$ if the equivalence map $\varphi_h:\comb(\WCC_1)\to\comb(\WCC_2)$ is admissible to $([\WCC_1],[\WCC_2])$. 
For an admissible homeomorphism $h:(\PP^2,\WCC_1)\to(\PP^2,\WCC_2)$ to $([\WCC_1],[\WCC_2])$, we call the permutation $\rho_h$ of $k$ letters with $h(\mcC_{1,j})=\mcC_{2,\rho_h(j)}$ the \textit{admissible permutation induced by $h$}. 
\end{defin}

Note that the symmetric group $\fS_k$ of degree $k$ acts on $\Theta_k$ by 
\[ \rho(a_1,\dots,a_k)=\left(a_{\rho^{-1}(1)},\dots,a_{\rho^{-1}(k)}\right) \]
for $(a_1,\dots,a_k)\in\Theta_k$ and $\rho\in\fS_k$. 

\begin{prop}\label{prop: splitting map}
Let $[\WCC_1], [\WCC_2]$ be two maximal-flex arrangements. 
If there exists a homeomorphism $h:(\PP^2,\WCC_1)\to(\PP^2,\WCC_2)$  admissible to $([\WCC_1],[\WCC_2])$,
then 
\[ \Phi_{[\WCC_1]}(\bm{a})=\Phi_{[\WCC_2]}(\rho_h(\bm{a})) \]
holds for any $\bm{a}\in\Theta_k$, where $\rho_h$ is the admissible permutation induced by $h$. 
\end{prop}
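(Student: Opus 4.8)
The plan is to express $\Phi_{[\WCC_i]}(\bm{a})=s_{\phi_{\bm{a}}}(D_i)$ in terms that are manifestly invariant under homeomorphism, and then transport everything along $h$. Since $h$ is admissible to $([\WCC_1],[\WCC_2])$, it satisfies $h(D_1)=D_2$ and $h(\mcC_{1,j})=\mcC_{2,\rho_h(j)}$ for all $j$; in particular $h(\WCB_1)=\WCB_2$, so $h$ restricts to a homeomorphism $h_0\colon\PP^2\setminus\WCB_1\xrightarrow{\sim}\PP^2\setminus\WCB_2$ carrying $D_1\setminus\WCB_1$ onto $D_2\setminus\WCB_2$. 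First I would recall the standard fact that such an $h_0$ sends a meridian $\mu_{1,j}$ of $\mcC_{1,j}$ to a conjugate of $\mu_{2,\rho_h(j)}^{\,\varepsilon}$, where $\varepsilon\in\{+1,-1\}$ is independent of $j$ (it records whether $h$ preserves the orientation of $\PP^2$).

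Next I would compare the defining surjections of the two covers. The meridian classes generate $H_1(\PP^2\setminus\WCB_i;\ZZ)$, so a homomorphism to the abelian group $\ZZ/n_{\bm{a}}$ is determined by its values on meridians (and is insensitive to replacing $(h_0)_*$ by a composition with an inner automorphism). By definition $\theta_{[\WCC_1],\bm{a}}(\mu_{1,j})=[a_j]$, while $\theta_{[\WCC_2],\rho_h(\bm{a})}(\mu_{2,\rho_h(j)})=[(\rho_h(\bm{a}))_{\rho_h(j)}]=[a_j]$; combined with the previous paragraph this yields $\theta_{[\WCC_2],\rho_h(\bm{a})}\circ(h_0)_*=\varepsilon\,\theta_{[\WCC_1],\bm{a}}$. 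In either case $\ker\bigl(\theta_{[\WCC_2],\rho_h(\bm{a})}\circ(h_0)_*\bigr)=\ker\theta_{[\WCC_1],\bm{a}}$, so $h_0$ lifts to a homeomorphism $\tilde h_0$ between the unramified $\ZZ/n_{\bm{a}}$-cover underlying $\phi_{\bm{a}}\colon X_{\bm{a}}\to\PP^2$ (for $[\WCC_1]$) and the unramified $\ZZ/n_{\bm{a}}$-cover underlying the cover $\phi_{\rho_h(\bm{a})}$ for $[\WCC_2]$, compatibly with $h_0$. Here I use that $n_{[\WCC_1],\bm{a}}=n_{[\WCC_2],\rho_h(\bm{a})}$, which holds because combinatorial equivalence makes $\rho_h$ match the invariants $m_j$ and $d_j$.

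Finally I would pass from preimages to splitting numbers. Since $D_i$ is smooth and irreducible, $D_i\setminus\WCB_i$ is connected and $\phi_{\bm{a}}^{-1}(D_i\setminus\WCB_i)\to D_i\setminus\WCB_i$ is an unramified covering; consequently $s_{\phi_{\bm{a}}}(D_i)$ equals the number of connected components of $\phi_{\bm{a}}^{-1}(D_i\setminus\WCB_i)$. As $\tilde h_0$ restricts to a homeomorphism from $\phi_{\bm{a}}^{-1}(D_1\setminus\WCB_1)$ onto the analogous preimage of $D_2\setminus\WCB_2$, these two spaces have the same number of connected components, whence $s_{\phi_{\bm{a}}}(D_1)=s_{\phi_{\rho_h(\bm{a})}}(D_2)$, that is $\Phi_{[\WCC_1]}(\bm{a})=\Phi_{[\WCC_2]}(\rho_h(\bm{a}))$. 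The step requiring the most care is the meridian-to-meridian statement together with the global sign $\varepsilon$; the sign is harmless because inversion of $\ZZ/n_{\bm{a}}$ leaves the kernel unchanged, hence changes neither the cover nor the splitting number. The remainder is bookkeeping: checking that $\rho_h$ acts compatibly on $\bm{a}$, on $n_{\bm{a}}$, and on the data defining the two covers.
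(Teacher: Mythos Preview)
Your proof is correct and follows essentially the same route as the paper's: track how the admissible homeomorphism acts on meridians (up to a global sign $\varepsilon$), deduce that the characters $\theta_{[\WCC_1],\bm{a}}$ and $\theta_{[\WCC_2],\rho_h(\bm{a})}$ agree after composition with $(h_0)_*$ up to $\pm1$, and conclude that the splitting numbers coincide. The only cosmetic difference is that the paper phrases the character identity as $\theta_{2,\bm{a}}\circ h_*=\varepsilon\,\theta_{1,\rho_h^{-1}(\bm{a})}$ and then quotes \cite[Proposition~1.3]{shirane2016} and \cite[Lemma~2.3]{pic0} for the invariance of splitting numbers, whereas you substitute $\bm{a}\mapsto\rho_h(\bm{a})$ from the outset and unpack the invariance directly by lifting $h_0$ to the unramified covers and counting connected components of the preimages of $D_i\setminus\WCB_i$. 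Both are the same argument; your version is simply more self-contained.
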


\begin{proof}
Let 
$\bm{a}=(a_1,\dots,a_{k})\in\Theta_k$.
By \cite[Remark~2.5]{pic0} and the definition of $\theta_{i,\bm{a}}:=\theta_{[\WCC_i],\bm{a}}:\pi_1(\PP^2\setminus\WCB_i)\to\ZZ/n_{\bm{a}}$, we obtain
\[ \theta_{2,\bm{a}}\circ h_\ast(\gamma_{1,j})=\theta_{2,\bm{a}}(\gamma_{2,\rho_h(j)}^\varepsilon)=\varepsilon a_{\rho_h(j)}=\varepsilon\theta_{1,\rho_h^{-1}(\bm{a})}(\gamma_{1,j}) \] 
for $\varepsilon=\pm1$ and meridians $\gamma_{i,j}$ of $\mcC_{i,j}$. 
Thus $\theta_{2,\bm{a}}\circ h_\ast=\varepsilon\theta_{1,\rho_h^{-1}(\bm{a})}$.
Note that $\varepsilon\theta_{1,\rho_h^{-1}(\bm{a})}$ gives the $\ZZ/n_{\bm{a}}$-cover
either $\phi_{1,\rho_h^{-1}(\bm{a})}$ or $\phi_{1,\rho_h^{-1}(-\bm{a})}$. 
Hence we have $s_{\phi_{1,\rho_h^{-1}(\bm{a})}}(D_1)=s_{\phi_{2,\bm{a}}}(D_2)$ by \cite[Proposition~1.3]{shirane2016} and \cite[Lemma~2.3]{pic0}. 
Therefore, we obtain $\Phi_{[\WCC_1]}(\rho_h^{-1}(\bm{a}))=\Phi_{[\WCC_2]}(\bm{a})$ and $\Phi_{[\WCC_1]}(\bm{a})=\Phi_{[\WCC_2]}(\rho_h(\bm{a}))$. 
\end{proof}

We prove the following proposition by using Propositions~\ref{prop:torsion-splitting} and \ref{prop: splitting map}.

\begin{prop}\label{prop:torsion}
Suppose that there exists a homeomorphism $h:(\PP^2,\WCC_1)\to(\PP^2,\WCC_2)$ 
 which is  admissible to $([\WCC_1],[\WCC_2])$, and let $\rho_h$ be the admissible permutation induced by $h$. 
Then the following equation holds  
\[\ord\big(\tau_1(\bm{a})\big)=\ord\Big(\tau_2\big(\rho_h(\bm{a})\big)\Big)\] 
for any $\bm{a}\in\Theta_k$.
\end{prop}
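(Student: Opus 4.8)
The plan is to combine the two propositions immediately preceding this statement. First I would fix an arbitrary $\bm{a}\in\Theta_k$ and apply Proposition~\ref{prop:torsion-splitting} to each of the two maximal-flex arrangements separately. This gives
\[
s_{\phi_{1,\bm{a}}}(D_1)=\frac{n_{\bm{a}}}{\ord(\tau_1(\bm{a}))}
\qquad\text{and}\qquad
s_{\phi_{2,\rho_h(\bm{a})}}(D_2)=\frac{n_{\bm{a}}}{\ord\big(\tau_2(\rho_h(\bm{a}))\big)},
\]
where I am using that $n_{[\WCC_1],\bm{a}}=n_{[\WCC_2],\bm{a}}=n_{\bm{a}}$ for a pair of combinatorially equivalent arrangements (so in particular $n_{[\WCC_2],\rho_h(\bm{a})}=n_{\bm{a}}$, since the $n$'s depend only on the combinatorics and the orbit of $\bm{a}$ under $\fS_k$). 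In the language of the paper, these two identities read $\Phi_{[\WCC_1]}(\bm{a})=n_{\bm{a}}/\ord(\tau_1(\bm{a}))$ and $\Phi_{[\WCC_2]}(\rho_h(\bm{a}))=n_{\bm{a}}/\ord(\tau_2(\rho_h(\bm{a})))$.

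Next I would invoke Proposition~\ref{prop: splitting map}: the existence of an admissible homeomorphism $h$ with induced admissible permutation $\rho_h$ yields $\Phi_{[\WCC_1]}(\bm{a})=\Phi_{[\WCC_2]}(\rho_h(\bm{a}))$. Chaining the three equalities gives
\[
\frac{n_{\bm{a}}}{\ord(\tau_1(\bm{a}))}=\Phi_{[\WCC_1]}(\bm{a})=\Phi_{[\WCC_2]}(\rho_h(\bm{a}))=\frac{n_{\bm{a}}}{\ord\big(\tau_2(\rho_h(\bm{a}))\big)},
\]
and since $n_{\bm{a}}$ is a positive integer we may cancel it and take reciprocals to conclude $\ord(\tau_1(\bm{a}))=\ord\big(\tau_2(\rho_h(\bm{a}))\big)$, as desired. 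Because $\bm{a}$ was arbitrary in $\Theta_k$, the statement follows.

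There is no real obstacle here: the proof is essentially a two-line bookkeeping argument once Propositions~\ref{prop:torsion-splitting} and~\ref{prop: splitting map} are in hand. The only point that deserves a word of care is that the splitting number $\Phi_{[\WCC_2]}$ appearing in Proposition~\ref{prop: splitting map} must be the same function as the one computed via Proposition~\ref{prop:torsion-splitting}; this is fine because both are defined as $s_{\phi_{\bm{a}}}(D)$ with the same cyclic cover $\phi_{\bm{a}}$ determined by $\bm{a}$ and the arrangement, and Lemma~\ref{lem:equiv_O} guarantees that $\tau_i(\bm{a})$ — and hence its order — is well defined independently of the chosen maximal flex $O_i$. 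I would also note in passing that $\rho_h$ depending on $h$ (rather than being canonical) is harmless: the statement is quantified over all $\bm{a}$, so applying it to $\rho_h^{-1}(\bm{a})$ recovers the symmetric form $\ord(\tau_1(\rho_h^{-1}(\bm{a})))=\ord(\tau_2(\bm{a}))$ if needed later.
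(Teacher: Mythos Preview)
Your proposal is correct and follows essentially the same approach as the paper: apply Proposition~\ref{prop: splitting map} to get $s_{\phi_{1,\bm{a}}}(D_1)=s_{\phi_{2,\rho_h(\bm{a})}}(D_2)$, then invoke Proposition~\ref{prop:torsion-splitting} on each side to translate the equality of splitting numbers into equality of the orders of $\tau_1(\bm{a})$ and $\tau_2(\rho_h(\bm{a}))$. The paper's version is just the terse two-line form of what you wrote out in detail.
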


\begin{proof}
Suppose that there is a homeomorphism $h:\PP^2\to\PP^2$ such that $h(D_1)=D_2$ and $h(C_{1,k})=C_{2,\rho_h(k)}$. 
By Proposition~\ref{prop: splitting map}, we have 
\[ s_{\phi_{1,\bm{a}}}(D_1)=s_{\phi_{2,\rho_h(\bm{a})}}(D_2). \]
By Proposition~\ref{prop:torsion-splitting}, we obtain $\ord(\tau_1(\bm{a}))=\ord(\tau_2(\rho_h(\bm{a})))$. 
\end{proof}

Given a pair of combinatorially equivalent maximal-flex arrangements, we denote by $\adper([\WCC_1],[\WCC_2])$ the set of permutations which are admissible to the pair $([\WCC_1],[\WCC_2])$: 
\[ \adper([\WCC_1],[\WCC_2]):=\left\{ \rho\in\fS_k \mid \mbox{$\rho$ is admissible to $([\WCC_1],[\WCC_2])$} \right\}, \] 
where $\fS_k$ is the symmetric group of degree $k$. 
Put $\adper[\WCC_i]:=\adper([\WCC_i],[\WCC_i])$. 

\begin{lem}\label{lem:adper}
Let $[\WCC_i]
$ $(i=1,2,3)$ be 
combinatorially equivalent maximal-flex arrangements.
\begin{enumerate}[label={\rm (\roman{enumi})}]
	\item\label{lem:adper_mult} If $\rho\in\adper([\WCC_1],[\WCC_2])$ and $\rho'\in\adper([\WCC_2],[\WCC_3])$, then $\rho'\rho\in\adper([\WCC_1],[\WCC_3])$. 
	\item\label{lem:adper_inv} If $\rho\in\adper([\WCC_1],[\WCC_2])$, then $\rho^{-1}\in\adper([\WCC_2],[\WCC_1])$. 
	\item\label{lem:adper_sub} $\adper[\WCC_i]$ is a subgroup of $\fS_k$. 
\end{enumerate}
\end{lem}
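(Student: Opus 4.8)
The statement is purely combinatorial bookkeeping about admissible equivalence maps, so the plan is to unwind the definition of "admissible" from \cite[Definition~1.5]{pic0} and check that the three claimed closure properties are inherited from the corresponding properties of equivalence maps in general. Recall that an equivalence map $\varphi:\comb(\WCC_1)\to\comb(\WCC_2)$ is a family of bijections (on components, singular points, branches) preserving degrees, topological types and incidences; admissibility to $([\WCC_1],[\WCC_2])$ adds the requirements that $D_1\mapsto D_2$, that the decompositions $\WCB_i=\sum_j\mcC_{i,j}$ are respected (so $\varphi$ sends each $\mcC_{1,j}$ to some $\mcC_{2,j'}$, inducing a permutation $\rho_\varphi\in\fS_k$), together with whatever compatibility with the maximal-flex point $O_i$ is imposed there. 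A permutation $\rho\in\fS_k$ is then called admissible to $([\WCC_1],[\WCC_2])$ precisely when it equals $\rho_\varphi$ for some admissible $\varphi$.

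For item~\ref{lem:adper_mult}: I would pick admissible equivalence maps $\varphi:\comb(\WCC_1)\to\comb(\WCC_2)$ with $\rho_\varphi=\rho$ and $\varphi':\comb(\WCC_2)\to\comb(\WCC_3)$ with $\rho_{\varphi'}=\rho'$. Composition of equivalence maps is again an equivalence map (a routine check that each defining property — bijectivity, preservation of degree, of topological type, of incidence — is stable under composition), and the composite $\varphi'\circ\varphi$ clearly sends $D_1$ to $D_3$, respects the decompositions, and has induced permutation $\rho_{\varphi'\circ\varphi}=\rho'\rho$. Hence $\rho'\rho=\rho_{\varphi'\circ\varphi}$ is admissible to $([\WCC_1],[\WCC_3])$. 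For item~\ref{lem:adper_inv}: given admissible $\varphi$ with $\rho_\varphi=\rho$, the inverse $\varphi^{-1}$ (each component bijection inverted) is again an equivalence map $\comb(\WCC_2)\to\comb(\WCC_1)$, still sends $D_2\mapsto D_1$ and respects decompositions, and has induced permutation $\rho^{-1}$; thus $\rho^{-1}\in\adper([\WCC_2],[\WCC_1])$. Item~\ref{lem:adper_sub} is then immediate: $\adper[\WCC_i]$ is nonempty since the identity equivalence map is admissible and induces $\id\in\fS_k$; it is closed under products by \ref{lem:adper_mult} applied with $\WCC_1=\WCC_2=\WCC_3=\WCC_i$, and closed under inverses by \ref{lem:adper_inv} with $\WCC_1=\WCC_2=\WCC_i$; therefore it is a subgroup of $\fS_k$.

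The only point requiring genuine care — the "main obstacle", though a mild one — is verifying that the extra admissibility data beyond a plain equivalence map (the normalization at the maximal-flex points $O_i$, and the stipulation that the induced permutation be well defined) really is preserved under composition and inversion. Concretely, one must check that if $\varphi$ carries the distinguished flex configuration of $D_1$ to that of $D_2$ and $\varphi'$ carries that of $D_2$ to that of $D_3$, then $\varphi'\circ\varphi$ carries the $D_1$-configuration to the $D_3$-configuration, and similarly for $\varphi^{-1}$; this follows directly from the transitivity/symmetry of "carries to", but it is the place where one should quote \cite[Definition~1.5]{pic0} precisely. Once that is in hand, all three assertions are formal consequences of the fact that equivalence maps form (the morphisms of) a groupoid and $\rho_{(-)}$ is a functor from that groupoid to $\fS_k$.
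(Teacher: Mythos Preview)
Your proposal is correct and follows essentially the same approach as the paper: both arguments take admissible equivalence maps realizing $\rho$ and $\rho'$, observe that equivalence maps compose and invert (the paper phrases this via the dual-graph description from \cite[Definition~1.1]{pic0}), and then track the induced permutation through composition and inversion. One small clarification: the admissibility condition in \cite[Definition~1.5]{pic0} does not actually impose any normalization at the maximal-flex points $O_i$, so the ``main obstacle'' you flag is not present---the only extra data are $D_1\mapsto D_2$ and preservation of the decomposition $\WCB_i=\sum_j\mcC_{i,j}$, both of which are transparently stable under composition and inversion.
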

\begin{proof}
Recall that an equivalence map
$
\varphi:\comb(\WCC_1)\to\comb(\WCC_2)
$ is an automorphism $\tilde{\varphi}:\Gamma_{\mcC_1}\to\Gamma_{\mcC_2}$ such that $\tilde{\varphi}(\irr_{\mcC_1})=\irr_{\mcC_2}$, where $\Gamma_{\mcC_i}$ is the dual graph of $\mathrm{bl}_{\mcC_i}^{-1}(\mcC)$ with $\mathrm{bl}_{\mcC_i}:\hat{\PP}^2_i\to\PP^2$ the minimal embedded resolution of $\mcC_i$ (see \cite[Definition~1.1]{pic0} for details). 
Thus there are canonically the composition $\varphi\varphi':\comb(\mcC_1)\to\comb(\mcC_3)$ and the inverse $\varphi^{-1}:\comb(\mcC_2)\to\comb(\mcC_1)$ for equivalence maps $\varphi:\comb(\mcC_1)\to\comb(\mcC_2)$ and $\varphi':\comb(\mcC_2)\to\comb(\mcC_3)$. 

Since $\rho\in\adper([\WCC_1],[\WCC_2])$ and $\rho'\in\adper([\WCC_2],[\WCC_3])$, there exist two equivalence map $\varphi:\comb(\WCC_1)\to\comb(\WCC_2)$ and $\varphi':\comb(\WCC_2)\to\comb(\WCC_3)$ such that $\varphi_{\irr}(\irr_{\mcC_{1,j}})=\irr_{\mcC_{2,\rho(j)}}$ and $\varphi'_{\irr}(\irr_{\mcC_{2,j}})=\irr_{\mcC_{3,\rho'(j)}}$. 
Then the composition $\varphi'\varphi:\comb(\WCC_1)\to\comb(\WCC_3)$ satisfies 
\[
	(\varphi'\varphi)_{\irr}(\irr_{\mcC_{1,j}})
	=\varphi'_{\irr}(\irr_{\mcC_{2,\rho(j)}})
	=\irr_{\mcC_{3,\rho'\rho(j)}}. 
\]
Hence $\rho'\rho\in\adper([\WCC_1],[\WCC_3])$, and \ref{lem:adper_mult} holds. 
The inverse $\varphi^{-1}:\comb(\WCC_2)\to\!\comb(\WCC_1)$ satisfies
\[
\varphi^{-1}_{\irr}\varphi_{\irr}=\id_{\irr_{\WCC_1}}\text{ and }\varphi_{\irr}\varphi^{-1}_{\irr}=\id_{\irr_{\WCC_2}}.
\]
This implies that $\rho^{-1}\in\adper([\WCC_2],[\WCC_1])$ for $\rho\in\adper([\WCC_1],[\WCC_2])$, and \ref{lem:adper_inv} holds. 
Moreover \ref{lem:adper_sub} follows from \ref{lem:adper_mult} and \ref{lem:adper_inv}. 
\end{proof}

From Proposition~\ref{prop:torsion}, we obtain the following corollary.

\begin{cor}\label{cor:torsion}
Let $[\WCC_1],[\WCC_2]$  be
combinatorially equivalent maximal-flex arrangements
such that any equivalence map $\varphi:\comb(\WCC_1)\to\comb(\WCC_2)$ is admissible to $([\WCC_1],[\WCC_2])$. 
\begin{enumerate}[label={\rm (\roman{enumi})}]
	\item\label{cor:torsion_i} 
	If there exists $\bm{a}_{\rho}\in\Theta_k$ for each $\rho\in\adper([\WCC_1],[\WCC_2])$ such that 
		\[ \ord\big(\tau_1(\bm{a}_{\rho})\big)\ne\ord\Big(\tau_2\big(\rho(\bm{a}_\rho)\big)\Big),\] 
		then $(\WCC_1,\WCC_2)$ is a Zariski pair. 
	\item\label{cor:torsion_ii}
	$(\WCC_1,\WCC_2)$ is a Zariski pair if, for some $\bm{a}_0\in\Theta_k$ and $\rho_0\in\adper([\WCC_1],[\WCC_2])$, 
	\[ \left\{\ord\left(\tau_1\left(\rho_1(\bm{a}_0)\right)\right) \ \middle|\  \rho_1\in\adper[\WCC_1] \right\} \ne \left\{\ord\left(\tau_2\left(\rho_2\rho_0(\bm{a}_0)\right)\right) \ \middle|\  \rho_2\in\adper[\WCC_2] \right\} \]
	with multiplicity, in other words, 
	\[ \prod_{\rho_1\in\adper[\WCC_1]}\bigg(x-\ord\Big(\tau_1\big(\rho_1(\bm{a}_0)\big)\Big)\bigg)\ne
	\prod_{\rho_2\in\adper[\WCC_2]}\bigg(x-\ord\Big(\tau_2\big(\rho_2\rho_0(\bm{a}_0)\big)\Big)\bigg)\]
	 as polynomials in $x$. 
\end{enumerate}

\end{cor}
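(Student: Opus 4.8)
The plan is to argue by contradiction, reducing in both cases to Proposition~\ref{prop:torsion}. Suppose $(\WCC_1,\WCC_2)$ is \emph{not} a Zariski pair. Since $[\WCC_1]$ and $[\WCC_2]$ are combinatorially equivalent, this means there is a homeomorphism $h\colon(\PP^2,\WCC_1)\to(\PP^2,\WCC_2)$. Such an $h$ induces an equivalence map $\varphi_h\colon\comb(\WCC_1)\to\comb(\WCC_2)$, and by the standing hypothesis that \emph{every} equivalence map $\comb(\WCC_1)\to\comb(\WCC_2)$ is admissible to $([\WCC_1],[\WCC_2])$, this $\varphi_h$ is admissible; hence $h$ is an admissible homeomorphism and induces an admissible permutation $\rho_h\in\adper([\WCC_1],[\WCC_2])$. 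This is the only place the hypothesis ``any equivalence map is admissible'' is used: it upgrades the bare homeomorphism produced by the failure of the Zariski property into an admissible one, so that Proposition~\ref{prop:torsion} becomes applicable.

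For \ref{cor:torsion_i} I would then invoke Proposition~\ref{prop:torsion} for $\rho_h$, which gives $\ord(\tau_1(\bm{a}))=\ord(\tau_2(\rho_h(\bm{a})))$ for every $\bm{a}\in\Theta_k$; specializing to $\bm{a}=\bm{a}_{\rho_h}$, which exists because $\rho_h\in\adper([\WCC_1],[\WCC_2])$, contradicts the assumed inequality $\ord(\tau_1(\bm{a}_{\rho_h}))\ne\ord(\tau_2(\rho_h(\bm{a}_{\rho_h})))$. Hence no such $h$ exists and $(\WCC_1,\WCC_2)$ is a Zariski pair.

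For \ref{cor:torsion_ii} the additional ingredient is a coset computation with Lemma~\ref{lem:adper}. From \ref{lem:adper_mult}, \ref{lem:adper_inv} and \ref{lem:adper_sub} of that lemma one checks that $\adper([\WCC_1],[\WCC_2])=\sigma\cdot\adper[\WCC_1]=\adper[\WCC_2]\cdot\sigma$ for every fixed $\sigma\in\adper([\WCC_1],[\WCC_2])$; taking $\sigma=\rho_h$ and then $\sigma=\rho_0$ shows that $\rho_1\mapsto\rho_h\rho_1\rho_0^{-1}$ is a bijection of $\adper[\WCC_1]$ onto $\adper[\WCC_2]$. Setting $\rho_2:=\rho_h\rho_1\rho_0^{-1}$, so that $\rho_2\rho_0=\rho_h\rho_1$, and using that $\fS_k$ acts on $\Theta_k$ on the left, Proposition~\ref{prop:torsion} yields
\[
\ord\big(\tau_1(\rho_1(\bm{a}_0))\big)=\ord\big(\tau_2(\rho_h\rho_1(\bm{a}_0))\big)=\ord\big(\tau_2(\rho_2\rho_0(\bm{a}_0))\big)
\]
for each $\rho_1\in\adper[\WCC_1]$. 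Running $\rho_1$ over $\adper[\WCC_1]$, the left-hand multiset of orders then coincides, with multiplicity, with the right-hand multiset of orders obtained by running $\rho_2$ over $\adper[\WCC_2]$; equivalently, the two polynomials in the statement are equal, contradicting the hypothesis. Hence $(\WCC_1,\WCC_2)$ is again a Zariski pair.

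Everything downstream of Proposition~\ref{prop:torsion} is routine; the one point I expect to require care is the bookkeeping in \ref{cor:torsion_ii}, where one must keep track of which of $\adper([\WCC_1],[\WCC_2])$, $\adper([\WCC_2],[\WCC_1])$, $\adper[\WCC_1]$, $\adper[\WCC_2]$ each composite lies in via the composition and inversion rules of Lemma~\ref{lem:adper}, and in particular verify that $\rho_1\mapsto\rho_h\rho_1\rho_0^{-1}$ carries $\adper[\WCC_1]$ \emph{onto} $\adper[\WCC_2]$ rather than merely into it --- this surjectivity is exactly what legitimizes comparing the two lists of orders with multiplicity.
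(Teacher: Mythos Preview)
Your proposal is correct and follows essentially the same approach as the paper: argue by contradiction, upgrade the resulting homeomorphism to an admissible one via the standing hypothesis, and then invoke Proposition~\ref{prop:torsion}, with Lemma~\ref{lem:adper} supplying the coset bookkeeping in part~\ref{cor:torsion_ii}. Your treatment of~\ref{cor:torsion_ii} is in fact slightly more explicit than the paper's, which only writes down the map $\rho_1\mapsto\rho_2:=\rho_h\rho_1\rho_0^{-1}$ without spelling out that it is a bijection $\adper[\WCC_1]\to\adper[\WCC_2]$; you are right that this surjectivity is what justifies the multiset equality.
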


\begin{proof}
\ref{cor:torsion_i} is clear from Proposition~\ref{prop:torsion}. 
We prove \ref{cor:torsion_ii}. 
Suppose that there exists a homeomorphism $h:(\PP^2,\WCC_1)\to(\PP^2,\WCC_2)$. 
By the assumption, the equivalence map $\varphi_h:\comb(\WCC_1)\to\comb(\WCC_2)$ induced by $h$ is admissible to $([\WCC_1],[\WCC_2])$. 
Thus we have the permutation $\rho_h$ induced by $h$ admissible to $([\WCC_1],[\WCC_2])$. 
By Proposition~\ref{prop:torsion}, we have $\ord(\tau_1(\rho_1(\bm{a}_0)))=\ord(\tau_2(\rho_h\rho_1(\bm{a}_0)))$ for any $\rho_1\in\adper[\WCC_1]$. 
We obtain $\rho_2\rho_0=\rho_h\rho_1$ for $\rho_2:=\rho_h\rho_1\rho_0^{-1}\in\adper[\WCC_2]$ by Lemma~\ref{lem:adper}. 
Therefore we obtain
\[
	\left\{ \ord\Big( \tau_1\big( \rho_1(\bm{a}_0) \big) \Big) \ \middle|\  \rho_1\in\adper[\WCC_1] \right\}
	=
	\left\{ \ord\Big( \tau_2\big( \rho_2\rho_0(\bm{a}_0) \big) \Big) \ \middle|\  \rho_2\in\adper[\WCC_2] \right\}
\]
with multiplicity. 
\end{proof}

The following lemma implies that it is enough to consider $\bm{a}\!=(a_1,\dots,a_k)\in\Theta_k$,
 $0\leq a_j\!< 
\!\lcm(m_1,\dots,m_k)$ for any $j=1,\dots,k$ when we apply Corollary~\ref{cor:torsion}. 

\begin{lem}\label{lem:index}
Let $[\WCC_1],[\WCC_2]$  be
combinatorially equivalent maximal-flex arrangements
such that any equivalence map $\varphi:\comb(\WCC_1)\to\comb(\WCC_2)$ is admissible to $([\WCC_1],[\WCC_2])$.
\begin{enumerate}[label={\rm (\roman{enumi})}]
\item\label{lem:index_lcm} For $\bm{a}=(a_1,\dots,a_k)\in\Theta_k$, $n_{\bm{a}}$ is a divisor of the least common multiple $\ell:=\lcm(m_1,\dots,m_k)$. 

\item\label{lem:index_red} For $\bm{a}=(a_1,\dots,a_k)\in\Theta_k$, put 
\[ 
b_j':=a_j-n_{\bm{a}}\left\lfloor\frac{a_j}{n_{\bm{a}}}\right\rfloor, \qquad 
\kappa_{\bm{a}}:=\gcd(b_1',\dots,b_k'), 
\qquad 
b_j:=\frac{b_j'}{\kappa_{\bm{a}}} 
\]
and $\bm{b}:=(b_1,\dots,b_k)\in\Theta_k$. 
Then $n_{\bm{b}}$ is divisible by $n_{\bm{a}}$, and 
\[ 
\tau_i(\bm{a})=\frac{\kappa_{\bm{a}}n_{\bm{b}}}{n_{\bm{a}}}\tau_i(\bm{b}) \qquad (i=1,2) 
\] 
as elements of $\pic^0(D_i)$. 
\item\label{lem:index_split} If $s_{\phi_{1,\rho(\bm{a})}}(D_1)\ne s_{\phi_{2,\bm{a}}}(D_2)$ for a permutation $\rho$ admissible to $([\WCC_1],[\WCC_2])$, then $s_{\phi_{1,\rho(\bm{b})}}(D_1)\ne s_{\phi_{2,\bm{b}}}(D_2)$. 
\end{enumerate}
\end{lem}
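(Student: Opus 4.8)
The plan is to handle the three parts in order, since \ref{lem:index_red} uses the divisibility in \ref{lem:index_lcm} and \ref{lem:index_split} is a formal consequence of \ref{lem:index_red} together with Proposition~\ref{prop:torsion-splitting}. For \ref{lem:index_lcm} I would argue prime by prime: fix a prime $p$; as $\gcd(a_1,\dots,a_k)=1$ there is an index $j_0$ with $p\nmid a_{j_0}$, and since $n_{\bm a}\mid a_{j_0}m_{j_0}$ the $p$-adic valuation of $n_{\bm a}$ is at most that of $m_{j_0}$, hence at most that of $\ell=\lcm(m_1,\dots,m_k)$; letting $p$ vary gives $n_{\bm a}\mid\ell$. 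In particular each $b_j'<n_{\bm a}\le\ell$, so the reduced vector $\bm b$ has entries in the announced range.

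For \ref{lem:index_red} the first task is $n_{\bm a}\mid n_{\bm b}$. The congruence $b_j'\equiv a_j\pmod{n_{\bm a}}$ together with $n_{\bm a}\mid a_jm_j$ and $n_{\bm a}\mid\sum_ja_jd_j$ yields $n_{\bm a}\mid b_j'm_j=\kappa_{\bm a}b_jm_j$ and $n_{\bm a}\mid\sum_jb_j'd_j=\kappa_{\bm a}\sum_jb_jd_j$, hence $n_{\bm a}\mid\kappa_{\bm a}n_{\bm b}$. Now $\gcd(\kappa_{\bm a},n_{\bm a})=\gcd(b_1',\dots,b_k',n_{\bm a})=\gcd(a_1,\dots,a_k,n_{\bm a})=1$, so $n_{\bm a}\mid n_{\bm b}$ and $c:=\kappa_{\bm a}n_{\bm b}/n_{\bm a}\in\ZZ$. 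The identity is then a one-line computation in $\pic^0(D_i)$, using $b_j'm_j/n_{\bm a}=a_jm_j/n_{\bm a}-m_j\big\lfloor a_j/n_{\bm a}\big\rfloor$ and $m_j\ft_j\sim0$:
\[
c\,\tau_i(\bm b)=\sum_{j}\frac{\kappa_{\bm a}b_jm_j}{n_{\bm a}}\ft_j=\sum_{j}\frac{b_j'm_j}{n_{\bm a}}\ft_j=\sum_{j}\frac{a_jm_j}{n_{\bm a}}\ft_j-\sum_{j}\Big\lfloor\frac{a_j}{n_{\bm a}}\Big\rfloor(m_j\ft_j)=\tau_i(\bm a).
\]

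For \ref{lem:index_split} I would first record that an admissible permutation $\rho$ preserves the degrees $d_j$ and the multiplicity gcd's $m_j$ (both being combinatorial invariants that an equivalence map respects), so a direct substitution shows $n_{\rho(\bm a)}=n_{\bm a}$, $n_{\rho(\bm b)}=n_{\bm b}$, $\kappa_{\rho(\bm a)}=\kappa_{\bm a}$, and that the reduced vector attached to $\rho(\bm a)$ is exactly $\rho(\bm b)$; consequently the integer $c$ of \ref{lem:index_red} is the same for $(\rho(\bm a),\rho(\bm b))$ as for $(\bm a,\bm b)$. Because $\phi_{1,\rho(\bm a)}$ and $\phi_{2,\bm a}$ are cyclic covers of the same degree $n_{\bm a}$ (and $\phi_{1,\rho(\bm b)},\phi_{2,\bm b}$ of degree $n_{\bm b}$), Proposition~\ref{prop:torsion-splitting} converts the splitting-number equalities into the equalities $\ord\tau_1(\rho(\bm a))=\ord\tau_2(\bm a)$ and $\ord\tau_1(\rho(\bm b))=\ord\tau_2(\bm b)$. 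Now I argue by contraposition: suppose $\ord\tau_1(\rho(\bm b))=\ord\tau_2(\bm b)=:r$. Applying \ref{lem:index_red} to $[\WCC_1]$ with input vector $\rho(\bm a)$ and to $[\WCC_2]$ with input vector $\bm a$ gives $\tau_1(\rho(\bm a))=c\,\tau_1(\rho(\bm b))$ and $\tau_2(\bm a)=c\,\tau_2(\bm b)$ with the same $c$; hence, using $\ord(cg)=\ord(g)/\gcd(\ord(g),c)$, we get $\ord\tau_1(\rho(\bm a))=r/\gcd(r,c)=\ord\tau_2(\bm a)$, i.e.\ $s_{\phi_{1,\rho(\bm a)}}(D_1)=s_{\phi_{2,\bm a}}(D_2)$, which is the contrapositive of the assertion.

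None of this is deep. The one step that genuinely uses the hypotheses is the coprimality $\gcd(\kappa_{\bm a},n_{\bm a})=1$ in \ref{lem:index_red}, without which one obtains only $n_{\bm a}\mid\kappa_{\bm a}n_{\bm b}$ instead of $n_{\bm a}\mid n_{\bm b}$; the other place that needs care is the bookkeeping in \ref{lem:index_split} showing that $n$, $\kappa_{\bm a}$ and the reduction operation $\bm a\mapsto\bm b$ all transform compatibly under an admissible permutation, which is precisely where one invokes that admissible permutations preserve the $d_j$ and the $m_j$.
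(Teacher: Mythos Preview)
Your proof is correct and follows essentially the same route as the paper. The paper proves \ref{lem:index_lcm} via the Bezout-style observation that $n_{\bm a}\mid a_j\ell$ for each $j$ (equivalent to your prime-by-prime count), proves \ref{lem:index_red} by the same coprimality-and-substitution computation, and for \ref{lem:index_split} simply says the claim is clear from \ref{lem:index_red} and Proposition~\ref{prop:torsion-splitting}; your extra bookkeeping about $\rho$ preserving $d_j,m_j$ only makes explicit what the paper leaves implicit after an initial relabeling.
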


\begin{proof}
After relabeling  the curves  $\mcC_{2,1},\dots,\mcC_{2,k}$, we may assume that there exists an equivalence map $\varphi:\comb(\WCC_1)\to\comb(\WCC_2)$ admissible to $([\WCC_1],[\WCC_2])$ such that the permutation $\rho_\varphi$ induced by $\varphi$ is the identity. 
Then we have 
\begin{align*}
	d_j&:=\deg\mcC_{1,j}=\deg\mcC_{2,j}, \\
	m_j&:=\gcd\{ (D_{1},\mcC_{1,j})_P \mid P\in D_1\cap\mcC_{1,j} \}=\gcd\{(D_2,\mcC_{2,j})_P\mid P\in D_2\cap\mcC_{2,j}\}. 
\end{align*}

\begin{itemize}
\item[\ref{lem:index_lcm}] For $\alpha_j, \beta_j\in\ZZ$ with $\alpha_j n_{\bm{a}}=a_jm_j$ and $l=\beta_jm_j$, we have $a_jl=\alpha_j\beta_j n_{\bm{a}}$ for any $j=1,\dots,k$. 
Since $\gcd(a_1,\dots,a_k)=1$, $l$ is divisible by $n_{\bm{a}}$.

\item[\ref{lem:index_red}] 
Since $\gcd(\kappa_{\bm{a}},n_{\bm{a}})$ is a divisor of $\gcd(a_1,\dots,a_k)=1$ by definition of $b_j'$ and $\kappa_{\bm{a}}$, 
we have $\gcd(\kappa_{\bm{a}},n_{\bm{a}})=1$. 
Hence 
\begin{align*} 
b_jm_j&=\frac{1}{\kappa_{\bm{a}}}\left( a_jm_j-n_{\bm{a}}\left\lfloor \frac{a_j}{n_{\bm{a}}} \right\rfloor m_j \right) \quad \mbox{and}  \\
\sum_{j=1}^kb_jd_j &=\frac{1}{\kappa_{\bm{a}}}\left( \sum_{j=1}^ka_jd_j-n_{\bm{a}}\sum_{j=1}^k\left\lfloor \frac{a_j}{n_{\bm{a}}} \right\rfloor d_j \right) 
\end{align*}
are divisible by $n_{\bm{a}}$. 
Thus $n_{\bm{b}}$ is also divisible by $n_{\bm{a}}$. 
By definition of $\tau_i(\bm{a})$ and  $\tau_i(\bm{b})$, we obtain 
\begin{align*} 
	\tau_i(\bm{a})&=\sum_{j=1}^k\frac{a_jm_j}{n_{\bm{a}}}\ft_{i,j} 
	=\sum_{j=1}^k\left( \frac{\kappa_{\bm{a}}n_{\bm{b}}}{n_{\bm{a}}}\cdot\frac{b_jm_j}{n_{\bm{b}}}+\left\lfloor\frac{a_j}{n_{\bm{a}}}\right\rfloor m_j \right)\ft_{i,j} \\
	&=\frac{\kappa_{\bm{a}}n_{\bm{b}}}{n_{\bm{a}}}\tau_i(\bm{b})
\end{align*}
since $m_j\ft_{i,j}=0$ as elements of $\pic^0(D_i)$, where $\ft_{i,j}:=\ft_j([\WCC_i],O_i)$. 

\item[\ref{lem:index_split}] The assertion is clear from \ref{lem:index_red} and Proposition~\ref{prop:torsion-splitting}.\qedhere
\end{itemize}
\end{proof}

\section{Torsion divisors of cubics and Zariski pairs}\label{sec:torsion_Zariski}

In this section, we interpret \cite[Theorem~2, Corollary~3]{pic0} and Corollary~\ref{cor:torsion} in terms of elliptic curves. 
Let $C\subset\PP^2$ be a smooth cubic. 
For any inflectional point  $O\in C$, we consider the group law of the elliptic curve $E:=(C,O)$ 
which is isomorphic to the jacobian of $C$.

Moreover, the sum of $3m$
points in $E$  vanishes if and only if there is a curve
of degree~$m$  passing  through these points. 
In this section, 
the symbol $+$ 
is the sum of divisors and $\dot{+}_O$ is the sum of the group law structure of $E$; 
for $n\in\mathbb{Z}$, $n P$ denotes a divisor and $\gr{n}P$  the multiple in $E$. 

We are going to deal with maximal-flex arrangements of type $(3;d_1,\dots,d_k)$

\[ 
\WCC:=C+\sum_{j=1}^{k} \mcC_j \quad \mbox{with} \quad [\WCC]:=(C; \mcC_1,\dots,\mcC_k), 
\]

where $\mcC_j$ are plane curves of degree $d_j$. 
The notation $\tau_{[\WCC]}$ has distinct meanings in \cite{pic0} and in this paper. 
For distinguishing $\tau_{[\WCC]}$, we here denote $\tau_{[\WCC]}$ in \cite{pic0} by $\tau_{[\WCC]}^L$, and $\tau_{[\WCC]}$ in this paper by $\tau_{[\WCC]}^O$. 
Namely, 
\begin{align*} 
\tau_{[\WCC]}^L(a_1,\dots,a_k)&:=\sum_{j=1}^k a_j\left(\frac{m_j}{n_{[\WCC]}}\fd_j[\WCC]-\frac{d_j}{n_{[\WCC]}}L|_{C}\right), 
\\
\tau_{[\WCC]}^O(a_1,\dots,a_k)&:=\sum_{j=1}^k \frac{a_jm_j}{n_{(a_1,\dots,a_k)}}\left( \fd_j[\WCC]-\frac{3d_j}{m_j}O \right),
\end{align*}

where $n_{[\WCC]}:=\gcd\big( \{(C,\mcC_j)_P\mid P\in\mcC_j\cap C,\ 1\leq j\leq k\} \cup \{d_1,\dots,d_k\} \big)$, $m_j:=\gcd\{(C, \mcC_j)_P\mid P\in C\cap\mcC_j\}$ and $L\subset\PP^2$ is a line. 

\subsection{Interpretation of \texorpdfstring{\cite[Theorem~2, Corollary~3]{pic0}}{[2, Theorem 2, Corollary 3]
}}
\mbox{}

Fix a maximal-flex arrangement  $[\WCC]:=(C;\mcC_1,\dots,\mcC_k)$ with $C$~smooth cubic, and an inflectional point $O\in C$, 
$E:=(C,O)$. 

Put $n:=n_{[\WCC]}$. 
Let $P_{[\WCC],j}\in E$ be the point defined by 
\begin{align*} 
P_{[\WCC],j}^L&:=\dot{\sum_{P\in E\cap \mcC_j}}
\gr{\frac{(E,\mcC_j)_P}{n}}
P, 
\end{align*}
where $\dot{\sum}$ means a summation as elements of  $E$. 
Let $\dot{G}_{[\mcC]}^L$ be the subgroup of  $E$ generated by $P_{[\WCC],1}^L,\dots,P_{[\WCC],k}^L$, and 
let $\dot{\tau}_{[\WCC]}^L:\ZZ^{\oplus k}\to \dot{G}_{[\WCC]}^L$ be the map defined by 
\[ \dot{\tau}_{[\WCC]}^L(a_1,\dots,a_k):=\gr{a_1}P_{[\WCC],1}^L\dot{+}\dots\dot{+}\gr{a_k} P_{[\WCC],k}^L. \]
We obtain the following Theorem  from \cite[Theorem~2]{pic0} by using the isomorphism $E\to\pic^0(C)$ defined by $P\mapsto P-O$ for $P\in E$. 

\begin{thm}\label{thm:torsion0_cubic}
Let $[\WCC_i]:=
(C_i;\mcC_{i,1},\dots,\mcC_{i,k})$ $(i=1,2)$ be two maximal-flex arrangements,
$C_i$ cubics, and fix inflectional points $O_i\in C_i$,
$E_i:=(C_i,O_i)$.
Assume that $\WCC_1$ and $\WCC_2$ have the same combinatorics, and that any equivalence map $\varphi:\comb(\WCC_1)\to\comb(\WCC_2)$ is admissible to $([\WCC_1],[\WCC_2])$. 
 If  $\ker\dot{\tau}_{[\WCC_1]}^L\ne\ker\dot{\tau}_{[\WCC_2]}^L\circ\rho$ for any $\rho\in\adper([\WCC_1],[\WCC_2])$, then $(\WCC_1, \WCC_2)$ is a Zariski pair. 
\end{thm}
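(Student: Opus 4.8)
The plan is to reduce Theorem~\ref{thm:torsion0_cubic} to the already-proved statement \cite[Theorem~2]{pic0} by transporting everything through the isomorphism $j_i\colon E_i\to\pic^0(C_i)$, $P\mapsto P-O_i$. First I would record the elementary dictionary between the two pictures: under $j_i$, the point $P_{[\WCC_i],j}^L\in E_i$ corresponds to the divisor class $\sum_{P}\frac{(C_i,\mcC_{i,j})_P}{n}(P-O_i)\in\pic^0(C_i)$, and since $\sum_P (C_i,\mcC_{i,j})_P = 3d_j$, this class equals $\tau_{[\WCC_i]}^L$ evaluated on the $j$-th standard basis vector — modulo the fact that $\tau^L$ in \cite{pic0} subtracts $\frac{d_j}{n}L|_{C_i}$ rather than $d_j\,O_i$; but $L|_{C_i}\sim 3O_i$ because $O_i$ is a flex, so the two conventions agree as classes in $\pic^0$. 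Consequently the linear map $\dot\tau_{[\WCC_i]}^L\colon\ZZ^{\oplus k}\to\dot G_{[\WCC_i]}^L\subset E_i$ is carried by $j_i$ to the linear map $\bm a\mapsto\tau_{[\WCC_i]}^L(\bm a)\in\pic^0(C_i)$, and in particular $\ker\dot\tau_{[\WCC_i]}^L=\ker\big(\bm a\mapsto\tau_{[\WCC_i]}^L(\bm a)\big)$ as subgroups of $\ZZ^{\oplus k}$.

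Next I would invoke \cite[Theorem~2]{pic0}, which is precisely the assertion that, under the same combinatorial hypotheses on $(\WCC_1,\WCC_2)$, if the kernels of the $\pic^0$-valued maps $\tau_{[\WCC_1]}^L$ and $\tau_{[\WCC_2]}^L\circ\rho$ differ for every $\rho\in\adper([\WCC_1],[\WCC_2])$, then $(\WCC_1,\WCC_2)$ is a Zariski pair. Combining this with the kernel identifications of the previous paragraph gives exactly the conclusion: $\ker\dot\tau_{[\WCC_1]}^L\ne\ker\dot\tau_{[\WCC_2]}^L\circ\rho$ for all $\rho$ translates verbatim into $\ker\tau_{[\WCC_1]}^L\ne\ker(\tau_{[\WCC_2]}^L\circ\rho)$ for all $\rho$, whence the Zariski-pair conclusion. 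One should also note that the action of $\fS_k$ on $\ZZ^{\oplus k}$ used to form $\tau_{[\WCC_2]}^L\circ\rho$ is the same on both sides of the isomorphism, since $j_i$ does not interact with the reindexing of the components $\mcC_{i,j}$; this is a routine bookkeeping check.

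The only genuinely delicate point — and the one I would spend the most care on — is the compatibility of the normalizing denominators. In \cite{pic0} the integer $n=n_{[\WCC]}$ is a \emph{global} gcd taken over all components simultaneously, and $\tau^L$ divides each $\fd_j[\WCC]$ by $n$; one must check that this is exactly the quantity appearing in $\dot\tau^L$ and that it does not get confused with the per-vector denominator $n_{\bm a}$ that governs $\tau^O$ in the present paper. Since Theorem~\ref{thm:torsion0_cubic} is stated entirely in the $\tau^L$ formalism, I would emphasize that throughout this proof $n$ is fixed to be $n_{[\WCC]}$, that $\frac{(E,\mcC_j)_P}{n}$ is an integer only after this global division (which is why $\dot\tau^L$ is a well-defined homomorphism on $\ZZ^{\oplus k}$ rather than only on $\Theta_k$), and that the flex condition $L|_{C_i}\sim 3O_i$ is what makes the $L$-subtraction and the $O_i$-subtraction interchangeable at the level of $\pic^0$. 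With those caveats in place the proof is a one-line appeal to \cite[Theorem~2]{pic0} after applying $j_i$.
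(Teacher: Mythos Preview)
Your proof is correct and follows essentially the same route as the paper: both arguments use the flex condition $L|_{C_i}\sim 3O_i$ together with $\sum_P (C_i,\mcC_{i,j})_P=3d_j$ to identify $\ker\dot\tau_{[\WCC_i]}^L$ with $\ker\tau_{[\WCC_i]}^L$ via the isomorphism $P\mapsto P-O_i$, and then invoke \cite[Theorem~2]{pic0}. Your extended discussion of $n_{[\WCC]}$ versus $n_{\bm a}$ is harmless but unnecessary here, since the statement lives entirely in the $\tau^L$ formalism.
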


\begin{proof}
Let $\ft_{ij}^L\in\pic^0(C_i)$ ($j=1,\dots,k$) be the divisor class represented 
by the following divisors on $C_i$:
\[ \sum_{P\in C_i\cap \mcC_{i,j}}\frac{(C_i,\mcC_{i,j})_P}{n}P -\frac{d_j}{n}L|_{C_i}. \]

Then we have $\ft_{ij}^L\in\pic^0(C_i)[n]$. 
Let $G_{[\mcC_i]}^L$ be the subgroup of $\pic^0(C_i)$ generated by $\ft_{i1}^L,\dots,\ft_{ik}^L$. 
Let $\tau_{[\mcC_i]}^L:\ZZ^{\oplus k}\to G_{[\mcC_i]}^L$ be the map defined by 
\[ \tau_{[\mcC_i]}^L(a_1,\dots,a_k):=a_1\ft_{i1}^L+\dots+a_k\ft_{ik}^L, \]
which corresponds to the map $\tau_{[\WCC]}$ defined in \cite{pic0}. 
For $\bm{a}=(a_1,\dots,a_k)\in\ZZ^{\oplus k}$, ${\tau}_{[\mcC_i]}^L(\bm{a})=0$ if and only if 

\[ \sum_{j=1}^k\sum_{P\in C_i\cap\mcC_{i,j}}a_j\frac{(C_i,\mcC_{i,j})_P}{n}(P-O_i)\sim 0 \]
since $L|_{C_i}\sim 3O_i$ and $3d_j=\sum_{P\in C_i\cap\mcC_{i,j}}(C_i,\mcC_{i,j})_P$. 

The latter condition is equivalent to $\dot{\tau}_{[\WCC_i]}^L(\bm{a})=0$. 
Hence the statements follow from \cite[Theorem~2]{pic0}. 
\end{proof}

\begin{cor}\label{cor:cubic_torsion0}
Assume the same hypotheses as Theorem{\rm~\ref{thm:torsion0_cubic}}, and put $P_{i,j}^L:=P_{[\WCC_i],j}^L\in E_i[n]$ for $i=1,2$ and $j=1,\dots,k$. 
Then the following statements hold:
\begin{enumerate}[label=\rm(\roman{enumi})]
	\item\label{cor:cubic_torsion0_i} If $\dot{G}_{[\WCC_1]}^L$ and $\dot{G}_{[\WCC_2]}^L$ are not isomorphic, then $(\WCC_1,\WCC_2)$ is a Zariski pair. 
	\item\label{cor:cubic_torsion0_ii} If $k=1$ and $\ord(P_{1,1}^L)\ne\ord(P_{2,1}^L)$, then $(\WCC_1,\WCC_2)$ is a Zariski pair. 
	\item\label{cor:cubic_torsion0_iii} If $(\ord(P_{1,1}^L),\dots,\ord(P_{1,k}^L))\ne(\ord(P_{2,\rho(1)}^L),\dots,\ord(P_{2,\rho(k)}^L))$ for any permutation $\rho\in\adper([\WCC_1],[\WCC_2])$, then $(\WCC_1,\WCC_2)$ is a Zariski pair. 
\end{enumerate}
\end{cor}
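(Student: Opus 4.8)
The plan is to derive Corollary~\ref{cor:cubic_torsion0} as a direct consequence of Theorem~\ref{thm:torsion0_cubic}, translating the hypothesis $\ker\dot{\tau}_{[\WCC_1]}^L\ne\ker\dot{\tau}_{[\WCC_2]}^L\circ\rho$ into the more concrete conditions stated in each item. Throughout, I will use that $\dot{\tau}_{[\WCC_i]}^L:\ZZ^{\oplus k}\to\dot{G}_{[\WCC_i]}^L$ is a surjective group homomorphism, so by the first isomorphism theorem $\ZZ^{\oplus k}/\ker\dot{\tau}_{[\WCC_i]}^L\cong\dot{G}_{[\WCC_i]}^L$, and that for a permutation $\rho\in\adper([\WCC_1],[\WCC_2])$, the precomposition $\dot{\tau}_{[\WCC_2]}^L\circ\rho$ has the same image $\dot{G}_{[\WCC_2]}^L$ (since $\rho$ permutes the coordinates, hence the generators). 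It therefore suffices in each item to show that the negation of the relevant concrete condition forces $\ker\dot{\tau}_{[\WCC_1]}^L=\ker\dot{\tau}_{[\WCC_2]}^L\circ\rho$ for some admissible $\rho$ to fail, i.e.\ to exhibit that under the concrete hypothesis this equality of kernels cannot hold for any admissible $\rho$.

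For item~\ref{cor:cubic_torsion0_i}: if $\ker\dot{\tau}_{[\WCC_1]}^L=\ker\dot{\tau}_{[\WCC_2]}^L\circ\rho$ for some $\rho$, then quotienting $\ZZ^{\oplus k}$ by these equal subgroups yields $\dot{G}_{[\WCC_1]}^L\cong\dot{G}_{[\WCC_2]}^L$; contrapositively, non-isomorphic groups force all kernels to differ, and Theorem~\ref{thm:torsion0_cubic} applies. For item~\ref{cor:cubic_torsion0_ii}, when $k=1$ we have $\Theta_1$-type data with $\dot{\tau}_{[\WCC_i]}^L(a)=\gr{a}P_{i,1}^L$, so $\ker\dot{\tau}_{[\WCC_i]}^L=\ord(P_{i,1}^L)\ZZ$ and the only admissible permutation is the identity; thus $\ker\dot{\tau}_{[\WCC_1]}^L\ne\ker\dot{\tau}_{[\WCC_2]}^L$ is equivalent to $\ord(P_{1,1}^L)\ne\ord(P_{2,1}^L)$, and we conclude by Theorem~\ref{thm:torsion0_cubic}. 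For item~\ref{cor:cubic_torsion0_iii}, I would argue that if $\ker\dot{\tau}_{[\WCC_1]}^L=\ker\dot{\tau}_{[\WCC_2]}^L\circ\rho$ for an admissible $\rho$, then in particular the $j$-th standard basis vector $e_j$ lands in $\ker\dot{\tau}_{[\WCC_1]}^L$ scaled by $\ord(P_{1,j}^L)$, i.e.\ $\ord(P_{1,j}^L)e_j\in\ker\dot{\tau}_{[\WCC_1]}^L$ and $\ord(P_{1,j}^L)$ is minimal with this property; the same holds on the other side with $\ord(P_{2,\rho(j)}^L)$ since $(\dot{\tau}_{[\WCC_2]}^L\circ\rho)(e_j)=\gr{1}P_{2,\rho(j)}^L$. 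Equality of the kernels then forces $\ord(P_{1,j}^L)=\ord(P_{2,\rho(j)}^L)$ for every $j$, contradicting the hypothesis; hence no admissible $\rho$ realises the kernel equality and Theorem~\ref{thm:torsion0_cubic} gives a Zariski pair.

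The main point requiring care is the observation, used in item~\ref{cor:cubic_torsion0_iii}, that the order of $P_{i,j}^L$ is recovered from $\ker\dot{\tau}_{[\WCC_i]}^L$ purely as the generator of the intersection of that kernel with the $j$-th coordinate axis $\ZZ e_j$; this is immediate since $\dot{\tau}_{[\WCC_i]}^L(te_j)=\gr{t}P_{i,j}^L$ vanishes exactly when $\ord(P_{i,j}^L)\mid t$. Similarly, that precomposition with the permutation $\rho$ sends $\ker\dot{\tau}_{[\WCC_2]}^L\cap\ZZ e_j$ to $\ker(\dot{\tau}_{[\WCC_2]}^L\circ\rho)\cap\ZZ e_{\rho^{-1}(j)}$ must be tracked with the correct indexing; a consistent convention for how $\rho$ acts on $\ZZ^{\oplus k}$ (matching the action on $\Theta_k$ recalled in the excerpt) resolves this bookkeeping. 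No deep input beyond Theorem~\ref{thm:torsion0_cubic} and elementary group theory is needed; the statement is essentially a dictionary translation of that theorem's hypothesis into the language of orders and isomorphism types of the subgroups $\dot{G}_{[\WCC_i]}^L$.
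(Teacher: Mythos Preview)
Your proposal is correct and follows the only natural route: the paper itself provides no explicit proof for this corollary, stating it immediately after Theorem~\ref{thm:torsion0_cubic} as a direct consequence. Your argument---using the first isomorphism theorem for~\ref{cor:cubic_torsion0_i}, the identification $\ker\dot{\tau}_{[\WCC_i]}^L=\ord(P_{i,1}^L)\ZZ$ for~\ref{cor:cubic_torsion0_ii}, and the observation that $\ker\dot{\tau}_{[\WCC_i]}^L\cap\ZZ e_j=\ord(P_{i,j}^L)\ZZ e_j$ for~\ref{cor:cubic_torsion0_iii}---fills in exactly the elementary group theory that the paper leaves implicit.
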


\begin{rem}
Since $\tau_{[\WCC]}^L$ does not depend on the choice of the  flex $O\in C$, Theorem~\ref{thm:torsion0_cubic} and Corollary~\ref{cor:cubic_torsion0} does not depend on the choice of $O$ by the proof. 
\end{rem}

\subsection{Interpretation of Corollary~\ref{cor:torsion}}
\mbox{}

Let $m_j:=\gcd\{(C,\mcC_j)_P\mid P\in C\cap\mcC_j\}$; we denote $P_{[\WCC],j}^O\in E$ 
and $\dot{\tau}_{[\WCC]}^O:\Theta_k\to E$ as:
\[ 
P_{[\WCC],j}^O:=\dot{\sum_{P\in C\cap\mcC_j}}  \gr{
\frac{(C,\mcC_j)_P}{m_j}
}P,
\quad 
\dot{\tau}_{[\WCC]}^O(\bm{a}):=\gr{\frac{a_1m_1}{n_{\bm{a}}}}P_{[\WCC],1}^O\dot{+}\dots\dot{+}\gr{\frac{a_km_k}{n_{\bm{a}}}}P_{[\WCC],k}^O,
\]

for $\bm{a}=(a_1,\dots,a_k)\in\Theta_k$. 
Since $\ord_{\pic^0(C)}(\tau_{[\WCC]}^O(\bm{a}))=\ord_{E}(\dot{\tau}_{[\WCC]}^O(\bm{a}))$ for any $\bm{a}\in\Theta$, $\ord_{E}(\dot{\tau}_{[\WCC]}^O(\bm{a}))$ does not depend on the choice of $O$ by Lemma~\ref{lem:equiv_O}, and the following corollary follows from Corollary~\ref{cor:torsion}. 

\begin{cor}\label{cor:torsion_cubic}
Let $[\WCC_i]:=
(C_i,\mcC_{i,1},\dots,\mcC_{i,k})$ $(i=1,2)$ be   maximal-flex arrangements of type $(3;d_1,\dots,d_k)$ such that $\mcC_1$ and $\mcC_2$ have the same combinatorics. 
Put $\dot{\tau}_i:=\dot{\tau}_{[\WCC_i]}^{O_i}:\Theta_k\to E_i$. 
Assume that any equivalence map $\varphi:\comb(\WCC_1)\to\comb(\WCC_2)$ is admissible to $([\WCC_1],[\WCC_2])$. 
\begin{enumerate}[label={\rm (\roman{enumi})}]
	\item\label{cor:torsion_cubic_i} If there exists $\bm{a}_\rho\in\Theta_k$ for each $\rho\in\adper([\WCC_1],[\WCC_2])$ such that 
	
	\[ \ord_{E_1}\big(\dot{\tau}_{1}(\bm{a}_\rho)\big)\ne \ord_{E_2)}\Big( \dot{\tau}_{2}\big( \rho(\bm{a}_{\rho}) \big) \Big), \]
	
	then $(\WCC_1,\WCC_2)$ is a Zariski pair. 
	\item\label{cor:torsion_cubic_ii} $(\WCC_1,\WCC_2)$ is a Zariski pair if, for some $\bm{a}_0\in\Theta_k$ and $\rho_0\in\adper([\WCC_1],[\WCC_2])$, 
	\[ \left\{ \ord\left( \dot{\tau}_{1}\left(\rho_1(\bm{a}_0)\right)\right) \ \middle|\ \rho_1\in\adper[\WCC_1] \right\} \ne 
	\left\{ \ord\left( \dot{\tau}_{2}\left(\rho_2\rho_0(\bm{a}_0)\right)\right) \ \middle|\ \rho_2\in\adper[\WCC_1] \right\}
	\]
	with multiplicity. 
\end{enumerate}
\end{cor}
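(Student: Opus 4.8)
The plan is to deduce Corollary~\ref{cor:torsion_cubic} directly from Corollary~\ref{cor:torsion} by transporting all statements through the canonical isomorphism $E_i\xrightarrow{\sim}\pic^0(C_i)$, $P\mapsto P-O_i$. First I would observe that the hypotheses of the two corollaries match: $[\WCC_i]$ is a maximal-flex arrangement of type $(3;d_1,\dots,d_k)$, so $C_i$ is a smooth cubic and an inflectional point $O_i$ provides a maximal tangent $L_{O_i}$ with $L_{O_i}\cap C_i=\{O_i\}$; the combinatorial equivalence of $\mcC_1,\mcC_2$ and the admissibility assumption on all equivalence maps are literally the hypotheses required in Corollary~\ref{cor:torsion}. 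The only thing to check is that the invariants $\ord_{E_i}(\dot\tau_i(\bm a))$ appearing here coincide with the invariants $\ord(\tau_i(\bm a))=\ord_{\pic^0(C_i)}(\tau_{([\WCC_i],O_i)}(\bm a))$ appearing in Corollary~\ref{cor:torsion}.

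The key computation is the identity $\ord_{\pic^0(C_i)}(\tau_{[\WCC_i]}^{O_i}(\bm a))=\ord_{E_i}(\dot\tau_{[\WCC_i]}^{O_i}(\bm a))$ for every $\bm a\in\Theta_k$. This follows because the isomorphism $P\mapsto P-O_i$ is additive and sends $\gr{n}P$ to $n(P-O_i)$; unwinding the definitions, the image of $\dot\tau_{[\WCC_i]}^{O_i}(\bm a)=\dot\sum_j\gr{\tfrac{a_jm_j}{n_{\bm a}}}P_{[\WCC_i],j}^{O_i}$ under this isomorphism is exactly $\sum_j\tfrac{a_jm_j}{n_{\bm a}}\big(\fd_j[\WCC_i]-\tfrac{3d_j}{m_j}O_i\big)$, using that $\sum_{P\in C_i\cap\mcC_{i,j}}(C_i,\mcC_{i,j})_P=3d_j$ and that $d_0=3$ here; this is precisely $\tau_{([\WCC_i],O_i)}(\bm a)$ as defined in \S\ref{sec:torsion_splitting}. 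An isomorphism of abelian groups preserves orders of elements, so the two orders agree. In particular, by Lemma~\ref{lem:equiv_O}, neither depends on the choice of $O_i$, which is the parenthetical remark preceding the statement.

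With this identification in hand, the proof is a one-line substitution. For item~\ref{cor:torsion_cubic_i}: the hypothesis $\ord_{E_1}(\dot\tau_1(\bm a_\rho))\ne\ord_{E_2}(\dot\tau_2(\rho(\bm a_\rho)))$ for each $\rho\in\adper([\WCC_1],[\WCC_2])$ is, via the identity above, exactly the hypothesis $\ord(\tau_1(\bm a_\rho))\ne\ord(\tau_2(\rho(\bm a_\rho)))$ of Corollary~\ref{cor:torsion}\ref{cor:torsion_i}, so $(\WCC_1,\WCC_2)$ is a Zariski pair. For item~\ref{cor:torsion_cubic_ii}: the multiset inequality $\{\ord(\dot\tau_1(\rho_1(\bm a_0)))\mid\rho_1\in\adper[\WCC_1]\}\ne\{\ord(\dot\tau_2(\rho_2\rho_0(\bm a_0)))\mid\rho_2\in\adper[\WCC_2]\}$ translates term-by-term into the corresponding multiset inequality in $\pic^0$, i.e.\ the hypothesis of Corollary~\ref{cor:torsion}\ref{cor:torsion_ii}, and again we conclude that $(\WCC_1,\WCC_2)$ is a Zariski pair.

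I do not expect any serious obstacle here: this is an "interpretation" statement whose entire content is a dictionary between the $\pic^0(C)$-language of \S\ref{sec:torsion_splitting} and the elliptic-curve language convenient for the examples in \S\ref{sec:ex}. The only mild care needed is bookkeeping with the normalizing denominators $m_j$ and $n_{\bm a}$ in the two (superficially different-looking) formulas for $\tau^O_{[\WCC]}$ and $\dot\tau^O_{[\WCC]}$, and checking that the definition of $P_{[\WCC],j}^O$ with the weights $(C,\mcC_j)_P/m_j$ matches $\fd_j[\WCC]$ after applying $P\mapsto P-O$ — but this is exactly the same manipulation already carried out in the proof of Theorem~\ref{thm:torsion0_cubic} (with $n_{[\WCC]}$ replaced by $m_j$ and $L|_{C_i}$ replaced by $3O_i$), so no new idea is required.
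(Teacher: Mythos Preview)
Your proposal is correct and matches the paper's approach exactly: the paper states in the sentence immediately preceding the corollary that $\ord_{\pic^0(C)}(\tau_{[\WCC]}^O(\bm{a}))=\ord_{E}(\dot{\tau}_{[\WCC]}^O(\bm{a}))$ and that the result then follows from Corollary~\ref{cor:torsion}. Your unwinding of this identity via the isomorphism $P\mapsto P-O_i$ and the bookkeeping with $m_j$, $n_{\bm a}$ is precisely the content the paper leaves implicit.
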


\section{Examples}\label{sec:ex}

\subsection{Cubics and triangles}
\mbox{}

Let us fix a smooth cubic curve $C$,  a flex $O\in C$, $E:=(C,O)$. 
Recall that $E[n]$ 
is isomorphic to~$\mathbb{Z}/n\oplus\mathbb{Z}/n$.
In this section, we consider three tangent lines $L_1, L_2, L_3$ derived from points of order 9 and use them to represent three-torsion classes. Recall that points of order~$9$ do not depend on~$O$ since they correspond
to non-inflectional points $P$ such that there exists another cubic 
$C'$ such that  $(C,C')_P=9$. 

\begin{lem}\label{lem:triangle}
Let $P\in C$ be a non-inflectional point. Let $L_1$ be the tangent line of $C$ at $P$ (simple tangent by hypothesis).
Let $P'$ be the residual intersection point of $C$ 
and $L_1$. Let $L_2$ be the tangent line of $C$ at $P'$
and let $P''$ be the residual intersection point of $C$ and $L_2$ (we may assume $P'=P''$ if $P'$ is a flex).
Let $L_3$ be the tangent line of $E$ at $P''$
and let $P'''$ be the residual intersection point of $C$ and~$L_3$.

Then, $P=P'''$ if and only if $P$ is a a  point of order 9 in  $E$ for any flex point $O\in C$. In particular, $P'$ and $P''$
are also points of order~$9$.
\end{lem}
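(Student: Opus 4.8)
The plan is to translate each of the three tangent-line constructions into an equation in the group law of $E=(C,O)$ and then simplify using the fact that on a smooth cubic a line meets $C$ in a divisor linearly equivalent to $3O$. First I would record the basic relation: if a line $L$ meets $C$ at points $A,B,C'$ (with multiplicity), then $A\dot{+}_O B\dot{+}_O C'=O$ in $E$. Applying this to the tangent line $L_1$ at $P$ with residual point $P'$ gives $\gr{2}P\dot{+}_O P'=O$, i.e. $P'=\gr{-2}P$; applying it to the tangent line $L_2$ at $P'$ with residual point $P''$ gives $P''=\gr{-2}P'=\gr{4}P$; and applying it to the tangent line $L_3$ at $P''$ with residual point $P'''$ gives $P'''=\gr{-2}P''=\gr{-8}P$. (When $P'$ happens to be a flex the tangent meets $C$ there with multiplicity $3$, which is exactly the degenerate case $P''=P'$, consistent with the formula $P''=\gr{-2}P'$ since a flex $Q$ satisfies $\gr{3}Q=O$ means $\gr{-2}Q=Q$ precisely when $Q$ is $3$-torsion — I would note this case matches.)

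Next I would combine these: the condition $P=P'''$ becomes $P=\gr{-8}P$, equivalently $\gr{9}P=O$, i.e. $P$ is a $9$-torsion point (allowing also $\gr{3}P=O$ or $P=O$ as sub-cases, but those are the inflectional points which are excluded by hypothesis — I should check that if $P$ has order dividing $3$ then $P$ is a flex, which follows from the classical fact that the flexes of $C$ are exactly the $3$-torsion points $E[3]$, together with $O$ itself being a flex; hence ``$\gr{9}P=O$ and $P$ non-inflectional'' is equivalent to ``$P$ has order exactly $9$''). This gives the stated equivalence. Finally, since $P'=\gr{-2}P$ and $\gcd(2,9)=1$, the point $P'$ also has order $9$, and likewise $P''=\gr{4}P$ has order $9$; this proves the last sentence.

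The only subtlety — and the part I would be most careful about — is the independence of the flex point $O$. The tangent-line construction itself is purely projective and makes no reference to $O$, so the geometric condition $P=P'''$ is manifestly $O$-independent; what needs justification is that the arithmetic characterization ``$P\in E[9]\setminus E[3]$'' does not depend on which flex we choose as origin. This follows because changing the origin from $O$ to another flex $O'$ translates the group law by the $3$-torsion element $O'-O$ (in $\pic^0$ terms), and translation by a $3$-torsion element preserves the $9$-torsion subgroup and sends points of exact order $9$ to points of exact order $9$. Alternatively, and perhaps more cleanly, I would phrase the arithmetic condition in $\pic^0(C)$ directly — $3(P-O)$ has order $3$ independently of $O$, and the construction forces $\bar P_{\mcL}=3(P-O)$ — which is the viewpoint already set up in the introduction for the associated $3$-torsion $\pic$-point.

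Remarks: the main obstacle is essentially bookkeeping — getting the signs in the residual-point formulas right (the residual of a tangent at $A$ is $\gr{-2}A$, not $\gr{2}A$) and handling the flex-degeneration case uniformly — rather than anything deep; the rest is a direct computation once the line-sum relation is in hand.
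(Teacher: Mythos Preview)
Your proposal is correct and follows essentially the same approach as the paper: compute $P'=\gr{-2}P$, $P''=\gr{-2}P'$, $P'''=\gr{-2}P''=\gr{-8}P$, and conclude that $P=P'''$ is equivalent to $\gr{9}P=O$, hence to $P$ having order exactly~$9$ since $P$ is non-inflectional. The paper's proof is a terse three-line version of this; your additional care with the flex-degeneration case, the order of $P',P''$, and the independence of the choice of~$O$ are all correct elaborations that the paper leaves implicit.
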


\begin{proof}
For the law group structure of  $E$, $P'=\gr{-2}P$, $P''=\gr{-2} P'$ and $P'''=\gr{-2}P''$.
Then $P=P'''$ if and only if $P=\gr{-8} P$, i.e. $P$ is of order~$9$ (as it is not a flex).
\end{proof}

\begin{defin}
A \emph{triangle}$~\mcL$ of~$C$ is the union of three tangent lines $L_1, L_2, L_3$ as in the construction
of Lemma~\ref{lem:triangle} starting with a point~$P$ of order~$9$. The points $P,P',P''$ are
the \emph{vertices} of the triangle. We say that the triangle is \emph{derived} from~$P$ (or $P'$
or $P''$).
\end{defin}

\begin{rem}

Fix  $E:=(C,O)$ as the zero element
. 
Given a triangle~$\mcL$ with vertices $P,P',P''$, then 
\[
P\dot+P'\dot+P''=P\dot+\gr{-2}P\dot+\gr{4}P=\gr{3}P=\gr{3}P'=\gr{3}P''.
\] 
We call
$P_{\mcL}^O:=\gr{3}P$ the \emph{associated three-torsion class} of the triangle $\mcL$ (we denote $P_\mcL^O$ by $P_\mcL$ if there is no ambiguity);
reversing the language we will also say that
it is a \emph{triangle associated to}~$P_{\mcL}$. 
Since there are 72 points of order 9 on an elliptic curve, we have 24 distinct triangles. Furthermore, there are 3 triangles associated to each point of order~$3$ (any flex point but the zero one).
\end{rem}

We are going to explain how these triangles are present in our main results. Let $T_1, T_2$ be fixed generators of 
$E[3]$.
Let $\mcL_1, \mcL_1^\prime$ be distinct triangles associated to $T_1$, $\mcL_2$ be a triangle associated to $\gr{2}T_1$ and $\mcL_3$ 
be a triangle associated to~$T_2$:\label{not_triangulo}
\[ P_{\mcL_1}=P_{\mcL_1'}=T_1, \qquad P_{\mcL_2}=\gr{2}T_1, \qquad P_{\mcL_3}=T_2. \]
Then we have the following theorems:

\begin{thm}\label{thm:main1}
Let 
\[
\mcC^1= C +\mcL_1+\mcL_1^\prime,\qquad
\mcC^2= C +\mcL_1+\mcL_2,\qquad
\mcC^3= C +\mcL_1+\mcL_3.
\]
Then $(\mcC^1, \mcC^2, \mcC^3)$ is a Zariski-triple.
\end{thm}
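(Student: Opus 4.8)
The plan is to apply Corollary~\ref{cor:torsion_cubic} three times, once for each pair among $(\mcC^1,\mcC^2,\mcC^3)$, treating each as a maximal-flex arrangement of type $(3;1,1,1,1,1,1)$ (each triangle contributes three tangent lines, so $k=6$) or, more efficiently, grouping the three lines of each triangle together when the combinatorial equivalence forces them to be permuted as a block. First I would verify the combinatorial hypothesis: the three arrangements $\mcC^1,\mcC^2,\mcC^3$ all consist of a smooth cubic together with six tangent lines forming two triangles, with the same incidence data (each triangle is a cycle of three bitangency/residual relations, the two triangles meet the cubic and each other transversally under the genericity assumptions), so they share combinatorics, and any equivalence map must send the cubic to the cubic and each triangle to a triangle, hence is admissible. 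This identifies $\adper[\mcC^i]$ and $\adper([\mcC^i],[\mcC^j])$ with (a subgroup of) the group permuting the two triangles and the three lines within each; in particular every element of $\adper$ preserves the partition into the two triangle-blocks (possibly swapping them).

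Next I would compute, for each arrangement, the relevant torsion data. For a triangle $\mcL$ derived from a point $P$ of order $9$, each of its three lines $L$ is an ordinary tangent, so $m_j=1$ for each line, and the associated divisor class $\ft_j$ on $C$ is $2P'_j - O'_j$ type data whose image in $\pic^0(C)\cong E$ is a point of order $9$; the combined contribution of the whole triangle, via $\dot\tau$, lands on the associated three-torsion class $P_\mcL$. The key computation is then: for a suitable choice of $\bm a\in\Theta_6$ (say one that adds up the contributions of a single triangle), $\dot\tau_i(\bm a)$ has order equal to $\ord(P_{\mcL})$ in $E$ — order $3$ for a triangle associated to a nonzero $3$-torsion point — while mixing contributions from the two triangles with coefficients producing sums like $P_{\mcL}\dot+\gr{2}P_{\mcL'}$ etc. I would then tabulate the multiset $\{\ord(\dot\tau_i(\rho_1(\bm a_0)))\mid \rho_1\in\adper[\mcC^i]\}$ for $i=1,2,3$: for $\mcC^1$ the two triangles both sit over $T_1$, so the group they generate in $E[3]$ is cyclic of order $3$ and all nontrivial combinations have order $3$; for $\mcC^2$ the triangles sit over $T_1$ and $\gr2 T_1$, which still generate only the cyclic group $\langle T_1\rangle$, so again everything has order dividing $3$; for $\mcC^3$ the triangles sit over $T_1$ and $T_2$, which generate all of $E[3]\cong(\ZZ/3)^2$, a genuinely rank-$2$ situation. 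The distinction between $\mcC^1$ and $\mcC^2$ is more subtle since both give cyclic order-$3$ subgroups: here I would use the finer invariant in Corollary~\ref{cor:torsion_cubic}\ref{cor:torsion_cubic_ii}, namely the multiset of orders as $\rho_1$ ranges over $\adper$, and exploit that in $\mcC^1$ the relation $P_{\mcL_1}=P_{\mcL_1'}$ means the two blocks are interchangeable in a way that forces equality of more of the $\dot\tau$ values, whereas in $\mcC^2$ the coefficient $\gr2$ breaks this symmetry; concretely, some $\bm a_0$ yields $\dot\tau_1(\bm a_0)=0$ (the two equal triangle classes cancel) but no admissible permutate of the corresponding $\bm a_0$ for $\mcC^2$ can give $0$, since $T_1\dot+\gr2 T_1=0$ would be needed but the signs/coefficients available through $\adper$ don't allow it. This is the step I expect to be the main obstacle: pinning down exactly which permutations are admissible and checking that the multisets of orders (with multiplicity, i.e. the polynomials in Corollary~\ref{cor:torsion_cubic}\ref{cor:torsion_cubic_ii}) genuinely differ for the pair $(\mcC^1,\mcC^2)$, rather than just for the easier pairs $(\mcC^1,\mcC^3)$ and $(\mcC^2,\mcC^3)$ where the isomorphism type of the generated subgroup of $E[3]$ already does the job.

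Finally, having the three pairwise inequalities of order-multisets, Corollary~\ref{cor:torsion_cubic}\ref{cor:torsion_cubic_ii} (together with Lemma~\ref{lem:index} to reduce to finitely many $\bm a$) gives that there is no admissible homeomorphism between any two of $\mcC^1,\mcC^2,\mcC^3$; since all equivalence maps are admissible, there is no homeomorphism of pairs at all, so $(\mcC^1,\mcC^2,\mcC^3)$ is a Zariski triple. I would close by remarking that existence and irreducibility of the realization spaces (the choices of $C$, $O$, generators $T_1,T_2$, and triangles $\mcL_1,\mcL_1',\mcL_2,\mcL_3$ with the prescribed associated $3$-torsion classes) is exactly the content of Theorem~\ref{thm1}\ref{thm1-1} and will be established by the modular/connectivity arguments of \S\ref{sec:modular}, so it need not be reproved here.
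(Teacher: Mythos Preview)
Your proposal contains a genuine gap in the individual-line setup. If you treat the arrangement as type $(3;1,1,1,1,1,1)$, then for each ordinary tangent line $L_j$ with tangency point $P_j$ and residual point $Q_j$ you have $m_j=\gcd(2,1)=1$ and hence $\ft_j=2P_j+Q_j-3O=L_j|_C-L_O|_C\sim 0$ in $\pic^0(C)$. Every $\ft_j$ is trivial, so $n_{\bm a}=\gcd(a_1,\dots,a_6)=1$ for all $\bm a\in\Theta_6$, and $\dot\tau^O_i(\bm a)=O$ identically. Your claim that $\ft_j$ has order~$9$ is therefore wrong, and Corollary~\ref{cor:torsion_cubic} gives no information in this setup.

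What rescues the argument is exactly the ``grouping'' you mention only in passing: taking $[\mcC^i]$ of type $(3;3,3)$ with $\mcC_j=\mcL$ a whole triangle. At each vertex of a triangle the total intersection multiplicity with the triangle is $2+1=3$, so $m_j=3$ and $n_{[\mcC^i]}=3$, and the torsion class attached to a triangle is $P_\mcL\in E[3]$, its associated $3$-torsion point. This is how the paper proceeds. Note also that the paper uses the $\tau^L$ invariant of \cite{pic0} (Theorem~\ref{thm:torsion0_cubic} and Corollary~\ref{cor:cubic_torsion0}), not Corollary~\ref{cor:torsion_cubic}: the subgroups $\dot G^L_{[\mcC^1]}$ and $\dot G^L_{[\mcC^2]}$ are cyclic while $\dot G^L_{[\mcC^3]}\cong(\ZZ/3)^2$, so Corollary~\ref{cor:cubic_torsion0}\ref{cor:cubic_torsion0_i} separates $\mcC^3$ from the others; for the remaining pair one computes $\dot\tau^L_{[\mcC^1]}(1,1)=T_1\dot+T_1=\gr{2}T_1\ne O$ while $\dot\tau^L_{[\mcC^2]}(1,1)=T_1\dot+\gr{2}T_1=O$, which (since $(1,1)$ is fixed by the swap) shows $\ker\dot\tau^L_{[\mcC^1]}\ne\ker\dot\tau^L_{[\mcC^2]}\circ\rho$ for both $\rho\in\adper$. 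You have this last point reversed: it is in $\mcC^2$, not $\mcC^1$, that the two triangle classes cancel, because $T_1\dot+\gr{2}T_1=O$ whereas $T_1\dot+T_1=\gr{2}T_1\ne O$. Your intended approach via Corollary~\ref{cor:torsion_cubic} would also succeed with the $(3;3,3)$ grouping (e.g.\ $\bm a_0=(1,2)$ gives order multiset $\{1,1\}$ for $\mcC^1$ and $\{3,3\}$ for $\mcC^2$), but only after the correct computation of $m_j$.
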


\begin{proof}

Put $[\mcC^1]:=(C;\mcL_1,\mcL_1')$, $[\mcC^2]:=(C;\mcL_1,\mcL_2)$ and $[\mcC^3]:=(C;\mcL_1,\mcL_3)$. 
Two lines $L_1,L_2$ of a triangle $\mcL$ of $C$ intersect at $C$. 

On the other hand, a line $L$ of a triangle $\mcL$ and a line $L'$ of another triangle $\mcL'$ intersect outside $ C $. 
This implies that any three lines in $\mcL+\mcL'$ are not concurrent. 
Thus $\mcC^1,\mcC^2$ and $\mcC^3$ have the same combinatorics. 
Moreover, we can infer that any equivalence map $\varphi:\comb([\mcC^i])\to\comb([\mcC^j])$ is admissible to $([\mcC^i],[\mcC^j])$ for any $i,j=1,2,3$. 

Since $\mcL_1$ and $\mcL_1'$ are triangles associated to $T_1$, and $\mcL_2$ and $\mcL_3$ are triangles associated to $\gr{2}T_1$ and $T_2$, respectively, 

$\dot{G}_{[\mcC^1]}^L$ and $\dot{G}_{[\mcC^2]}^L$ are cyclic, and 
$\dot{G}_{[\mcC^3]}^L$ is not cyclic. 
Note that $n_{[\mcC^i]}=3$ since $(C,\mcL)_P=3$ and $\deg\mcL=3$ for any triangle $\mcL$ of $E$ and $P\in C\cap\mcL$. 

By Corollary~\ref{cor:cubic_torsion0} \ref{cor:cubic_torsion0_i}, $(\mcC^i,\mcC^3)$ is a Zariski pair for each $i=1,2$. 
It is easy to see that $\dot{\tau}_{[\mcC^1]}^L(1,1)=\gr{2}T_1$ and $\dot{\tau}_{[\mcC^2]}^L(1,1)=O$. 
By Theorem~\ref{thm:torsion0_cubic}, $(\mcC^1,\mcC^2)$ is a Zariski pair. 
Therefore $(\mcC^1,\mcC^2,\mcC^3)$ is a Zariski triple. 
\end{proof}


\begin{rem}\label{rem:rel-to-mthm}
We note that it is not essential to fix an inflectional point $O\in C$ in the above arguments. However, doing so simplifies the arguments. We note that for any triangle $\mcL$, $\mcL^\prime$ of $C$ and the $3$-torsion $\pic$-points $\bar{P}_\mcL$,  $\bar{P}_\mcL$ we have
\begin{itemize}
\item  $\bar{P}_\mcL=\bar{P}_{\mcL^\prime}$ $\Leftrightarrow$ $P_\mcL=P_{\mcL^\prime}$ 
\item $\bar{P}_\mcL$ is a double of $\bar{P}_{\mcL^\prime}$ $\Leftrightarrow$ $P_\mcL=[2]P_{\mcL^\prime}$ 
\item $\bar{P}_\mcL$, $\bar{P}_{\mcL^\prime}$ generate the 3-torsion of $\pic^0(C)$ $\Leftrightarrow$ $P_\mcL$, $P_{\mcL^\prime}$ generate $E[3]$
\end{itemize}
Also, since $\bar{P}_\mcL$ does not depend no the choice of $O\in C$ this is true for any choice of $O$.
Hence the three cases in Theorem \ref{thm:main1}  correspond to the three cases of Theorem \ref{thm1}.

\end{rem}

\begin{thm}\label{thm:main2}
Let $L_{T_1}, L_{\gr{2}T_1}$ and  $L_{T_2}$ be the inflectional tangent lines of  $C$  at $T_1$, $\gr{2}T_1$ and $T_2$ respectively. 
Let
\[
\mcC^4= C +L_{T_1}+L_{\gr{2}T_1}+\mcL_1,\qquad
\mcC^5= C +L_{T_1}+L_{T_2}+\mcL_1.
\]
 If $\mcC^4$ and $\mcC^5$ have the same combinatorics, then $(\mcC^4, \mcC^5)$ form a Zariski pair.
\end{thm}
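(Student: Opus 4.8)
The plan is to apply Corollary~\ref{cor:torsion_cubic} (equivalently Corollary~\ref{cor:cubic_torsion0}) to the pair $[\mcC^4]=(C;L_{T_1},L_{\gr{2}T_1},\mcL_1)$ and $[\mcC^5]=(C;L_{T_1},L_{T_2},\mcL_1)$, where the components are ordered as $(\mcC_1,\mcC_2,\mcC_3)=(\text{inflectional line},\text{inflectional line},\text{triangle})$. First I would verify the combinatorial setup that Corollary~\ref{cor:torsion_cubic} requires. Each inflectional tangent line $L$ meets $C$ only at its flex with multiplicity~$3$, so $m_1=m_2=3$; each triangle meets $C$ at three points each with multiplicity~$3$, so $m_3=3$ as well, and the degrees are $d_1=d_2=1$, $d_3=3$. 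The incidences among the four components (inflectional lines, triangle lines, and $C$) are prescribed by the transversality hypotheses in the statement, and combinatorially an inflectional line is distinguished from a triangle line by its intersection behaviour with $C$ (one triple point on $C$ versus three triple points on $C$), so the only freedom an equivalence map $\varphi:\comb(\WCC_4)\to\comb(\WCC_5)$ has is to permute the two inflectional lines and to match up $\mcL_1$ with $\mcL_1$; every such equivalence map is admissible. Hence $\adper([\mcC^4],[\mcC^5])$ consists (after the obvious identification) of the identity and the transposition $(1\,2)$, and $\adper[\mcC^i]$ likewise.

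Next I would compute the relevant torsion invariants. With $O:=T_1$ as the inflectional point and $E:=(C,O)$, the $\pic^0$-point attached to an inflectional line at a flex $T$ is $T-O$, which has order $3$ (being a $3$-torsion point), and the $\pic^0$-point attached to a triangle $\mcL$ is its associated $3$-torsion $\pic$-point $\bar P_\mcL=3(P-O)$ where $P$ is a vertex. So for $[\mcC^4]$ the three generators of the relevant subgroup of $\pic^0(C)$ are $T_1-O=0$ (since $O=T_1$), $\gr{2}T_1-O=\gr{2}T_1$, and $\bar P_{\mcL_1}=T_1-O=0$; while for $[\mcC^5]$ they are $T_1-O=0$, $T_2-O=T_2$, and $\bar P_{\mcL_1}=0$. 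The decisive point is that in $\mcC^4$ the nonzero generators $\gr{2}T_1$ and $\bar P_{\mcL_1}=0$ together span only the cyclic group $\langle T_1\rangle$, whereas in $\mcC^5$ the generators $T_2$ and $0$ span $\langle T_2\rangle$ — but more to the point, I should compare the two arrangements through the subgroup generated by all three torsion classes: in $[\mcC^4]$ this subgroup is $\langle \gr{2}T_1\rangle=\langle T_1\rangle\cong\ZZ/3$, and in $[\mcC^5]$ it is $\langle T_2\rangle\cong\ZZ/3$. Those are abstractly isomorphic, so Corollary~\ref{cor:cubic_torsion0}\ref{cor:cubic_torsion0_i} alone does not separate them; the correct invariant is the \emph{map} $\dot\tau$, i.e. the kernel of $\dot\tau_{[\WCC_i]}^L:\ZZ^{\oplus 3}\to E_i$, as in Theorem~\ref{thm:torsion0_cubic}. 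For $[\mcC^4]$, $\dot\tau^L$ sends $(a_1,a_2,a_3)\mapsto \gr{a_1}(T_1-O)\dot+\gr{a_2}(\gr2T_1)\dot+\gr{a_3}(T_1-O)=\gr{2a_2}T_1$ (using $O=T_1$), whose kernel is $\{a_2\equiv0\bmod 3\}$; for $[\mcC^5]$, $\dot\tau^L(a_1,a_2,a_3)=\gr{a_2}T_2$, with kernel $\{a_2\equiv0\bmod3\}$ as well — so even the kernels coincide. This tells me the separation must instead be read off through the choice of a \emph{different} inflectional point, or equivalently through the torsion invariants of Corollary~\ref{cor:torsion_cubic}, which is precisely why the authors phrase Theorem~\ref{thm1}\ref{thm1-2} in terms of $\bar P_\mcL$ versus $\pm(T-T')$.

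So the real plan is this: pick $\bm a=(a_1,a_2,a_3)\in\Theta_3$ and compute $\ord_{E}(\dot\tau_{[\mcC^i]}^{O}(\bm a))$ for both arrangements, using that $n_{\bm a}=\gcd(3a_1,3a_2,3a_3,a_1+a_2+3a_3)$ and that the three torsion generators are now (taking a \emph{generic}, i.e. non-$T_1$, flex $O$) $T_1-O$, $\gr2T_1-O$, and $\bar P_{\mcL_1}=3(T_1-O)=T_1-O$ in $\pic^0(C)$. The key numerical observation to exploit is that in $\mcC^4$ the class $\bar P_{\mcL_1}$ equals $\pm$ of the difference of the two flex-classes $ (T_1-O)-(\gr2T_1-O)=T_1-\gr2T_1=\gr{-1}T_1=\gr2T_1 $ up to sign (condition \ref{thm1-2a} with $T=T_1$, $T'=\gr2T_1$ since $\bar P_{\mcL_1}$, $T_1$ and $\gr2T_1$ all lie in $\langle T_1\rangle$), whereas in $\mcC^5$ the class $\bar P_{\mcL_1}=T_1-O$ is \emph{not} $\pm(T_1-T_2)$ because $T_1$ and $T_2$ are independent generators of $E[3]$ (condition \ref{thm1-2b}). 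I would then produce an explicit $\bm a$ — I expect $\bm a=(1,1,0)$ or $(1,0,1)$ to work — for which $\ord(\dot\tau_{[\mcC^4]}(\bm a))\ne\ord(\dot\tau_{[\mcC^5]}(\rho(\bm a)))$ for every $\rho\in\adper([\mcC^4],[\mcC^5])=\{\id,(1\,2)\}$; checking both $\rho$'s is a two-line case analysis since $(1\,2)$ only swaps $a_1\leftrightarrow a_2$ and the first two generators differ only by whether we hit $T_1-O$ or $\gr2T_1-O$. Conclude by Corollary~\ref{cor:torsion_cubic}\ref{cor:torsion_cubic_i} that $(\mcC^4,\mcC^5)$ is a Zariski pair. \textbf{The main obstacle} is organizing the bookkeeping so that the comparison is manifestly independent of the choice of flex $O$ (invoking Lemma~\ref{lem:equiv_O}) and covers \emph{all} admissible permutations at once; the arithmetic itself — distinguishing a configuration where a $3$-torsion class is a difference of two flex classes from one where it is not — is elementary linear algebra in $E[3]\cong(\ZZ/3)^2$, but it must be done uniformly over $\adper$, which is the step where I would be most careful.
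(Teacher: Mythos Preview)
Your overall route---set up $[\mcC^4],[\mcC^5]$ with $\adper\subset\langle(1\ 2)\rangle$ and invoke Corollary~\ref{cor:torsion_cubic}---is exactly the paper's. But there is a concrete gap at the decisive step: the vectors $\bm a=(1,1,0)$ and $(1,0,1)$ you propose both give $n_{\bm a}=1$. Apply your own formula $n_{\bm a}=\gcd(3a_1,3a_2,3a_3,a_1+a_2+3a_3)$: for $(1,1,0)$ the last entry is $2$, for $(1,0,1)$ it is $4$. When $n_{\bm a}=1$ one has $\dot\tau^O(\bm a)=\gr{3}P_1^O\dot+\gr{3}P_2^O\dot+\gr{3}P_3^O=O$ for \emph{both} arrangements (all the $P_j^O$ are $3$-torsion), so these vectors carry no information. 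The constraint you must impose is $a_1+a_2\equiv 0\pmod 3$, so that $n_{\bm a}=3$. The paper takes $\bm a_0=(1,2,1)$: then
\[
\dot\tau^O_{[\mcC^4]}(1,2,1)=T_1\dot+\gr{4}T_1\dot+T_1=O,\qquad
\dot\tau^O_{[\mcC^4]}(2,1,1)=\gr{2}T_1\dot+\gr{2}T_1\dot+T_1=\gr{2}T_1,
\]
of orders $1$ and $3$, whereas $\dot\tau^O_{[\mcC^5]}(1,2,1)=\gr{2}T_1\dot+\gr{2}T_2$ and $\dot\tau^O_{[\mcC^5]}(2,1,1)=T_2$ both have order $3$. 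Since $(1\ 2)$ swaps $(1,2,1)\leftrightarrow(2,1,1)$, Corollary~\ref{cor:torsion_cubic}\ref{cor:torsion_cubic_ii} (multisets $\{1,3\}\ne\{3,3\}$) finishes; equivalently, $\bm a=(1,2,1)$ alone works in part~\ref{cor:torsion_cubic_i} for both $\rho$'s.

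Your long detour through $\dot\tau^L$ is a dead end and the computations there are not quite what you think: because $d_1=d_2=1$ one has $n_{[\mcC^i]}=1$, so every $P^L_{[\mcC^i],j}$ is already $O$ and $\dot\tau^L\equiv O$ identically---this is exactly Remark~\ref{rem:not-pic0}\ref{rem:not-pic0_not}. The classes ``$T-O$'' you were manipulating are the $\ft_j$'s (the $O$-version), not the $L$-version; the two should not be conflated. Finally, there is no need to change the flex midstream: Lemma~\ref{lem:equiv_O} says $\ord(\dot\tau^O(\bm a))$ is independent of $O$, so fix one $O$ and compute directly as above.
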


\begin{proof}
Put $[\mcC^4]:=( C ;L_{T_1},L_{\gr{2}T_1},\mcL_1)$ and 
$[\mcC^5]:=( C ;L_{T_1},L_{T_2},\mcL_1)$. 
Note that $\adper([\mcC^i],[\mcC^j])\subset\langle(1\ 2)\rangle$ since $\deg \mcL_1=3>1$, and $m_1=m_2=m_3=3$. 
Since $n_{(1,2,1)}=n_{(2,1,1)}=3$, we obtain 
\begin{align*}
	\ord\big(\dot{\tau}_{[\mcC^4]}^O(1,2,1)\big)&=\ord(T_1\dot{+}\gr{4}T_1\dot{+}T_1)=1, 
	\\
	\ord\big(\dot{\tau}_{[\mcC^4]}^O(2,1,1)\big)&=\ord(\gr{2}T_1\dot{+}\gr{2}T_1\dot{+}T_1)=3. 
\end{align*}
Similarly, we obtain 
\begin{align*}
	\ord\big(\dot{\tau}_{[\mcC^5]}^O(1,2,1)\big)&=\ord(T_1\dot{+}\gr{2}T_2\dot{+}T_1)=3, 
	\\
	\ord\big(\dot{\tau}_{[\mcC^5]}^O(2,1,1)\big)&=\ord(\gr{2}T_1\dot{+}T_2\dot{+}T_1)=3. 
\end{align*}
Hence $(\mcC^4,\mcC^5)$ is a Zariski pair by Corollary~\ref{cor:torsion_cubic} \ref{cor:torsion_cubic_ii}. 
\end{proof}

\begin{rem}\label{rem:not-pic0}
Here we remark what is mentioned in Introduction. 
\begin{enumerate}[label={\rm (\roman{enumi})}]
\setlength{\leftskip}{-1.5em}
\item\label{rem:not-pic0_corr}

Note that $\mcC^4$ and $\mcC^5$ satisfy Conditions \ref{thm1-2a} and \ref{thm1-2b} in Theorem~\ref{thm1}~\ref{thm1-2}, respectively. 
Hence Theorem~\ref{thm:main2} and Proposition~\ref{prop:connectivity2} below show Theorem~\ref{thm1}~\ref{thm1-2}.


\item\label{rem:not-pic0_not} Since $n_{[\mcC^i]}=1$ for $i=4,5$, we have $\dot{\tau}_{[\mcC^i]}^L(\bm{a})=0$ for any $\bm{a}\in\ZZ^{\oplus 3}$. 
Hence Theorem~\ref{thm:main2} cannot be proved   using Theorem~\ref{thm:torsion0_cubic} or Corollary~\ref{cor:cubic_torsion0}. 
\end{enumerate}
\end{rem}

The existence of curves $\mcC^4$ and $\mcC^5$  with  the same combinatorics is shown in the following proposition.

\begin{prop}\label{prop:existence}
For a general choice of $E$, it is possible to choose generators $T_1$ and $T_2$ of 
$E[3]$  and a triangle $\mcL_1$ associated to $T_1$ so that any three of the lines in $L_{T_1}$, $L_{\gr{2}T_1}$, $L_{T_2}$ and $\mcL_1$ are not concurrent.
\end{prop}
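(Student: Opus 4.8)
The plan is to establish the required genericity for a single explicit cubic and then propagate it to the general member by a deformation argument. I would work inside the Hesse pencil $C_\lambda\colon x^3+y^3+z^3+\lambda\,xyz=0$: for all but finitely many $\lambda$ the member $C_\lambda$ is smooth, its nine flexes are exactly the nine base points of the pencil, and $\lambda\mapsto j(C_\lambda)$ is non-constant, hence dominates the $j$-line. Since the nine base points and their collinearity relations are independent of $\lambda$, I may fix once and for all three non-collinear base points $O,T_1,T_2$; then for every smooth member $O$ is a flex, $T_1,T_2$ generate $E_\lambda[3]$ with $E_\lambda:=(C_\lambda,O)$, and the inflectional tangents $L_{T_1},L_{\gr{2}T_1},L_{T_2}$ depend algebraically on $\lambda$ (their coefficients are the partial derivatives of the defining cubic at the fixed points $T_1,\gr{2}T_1,T_2$). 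The remaining datum, a triangle $\mcL_1$ associated to $T_1$ --- equivalently a point of order $9$ lying over $T_1$ --- does vary with $\lambda$, but algebraically: it, together with the three lines $L_1,L_2,L_3$ of $\mcL_1$, is defined over a finite cover of the $\lambda$-line; I pass to an irreducible component $B$ of this cover, a curve that still dominates the $j$-line.

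Over $B$, for each of the $\binom{6}{3}=20$ triples among the six lines $L_{T_1},L_{\gr{2}T_1},L_{T_2},L_1,L_2,L_3$, concurrency is the vanishing of a $3\times3$ determinant in their (algebraic) coefficients, hence the vanishing of a rational function on the curve $B$. First note that the lines $L_1,L_2,L_3$ of $\mcL_1$ are never concurrent: by Lemma~\ref{lem:triangle} they meet pairwise at the three distinct vertices $P,P',P''$, so $\mcL_1$ is a genuine triangle; thus only $19$ of these functions are relevant. If none of these $19$ functions vanishes identically, then the locus $Z\subset B$ where at least one of them vanishes is finite, and since $B$ is a curve finitely covering the $j$-line, all but finitely many $j$-invariants lie outside the image of $Z$; for such a $j$, a point of $B$ above it furnishes $T_1,T_2$ and $\mcL_1$ with no three of the six lines concurrent. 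That none of the $19$ functions is identically zero is checked by evaluating them at a single point of $B$, i.e.\ by fixing one concrete smooth $C_\lambda$ and one concrete triangle $\mcL_1$ and verifying by elementary linear algebra that no three of the six lines pass through a common point; this is carried out with \texttt{Sagemath} in the Appendix.

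For such an $E$ the arrangements $\mcC^4$ and $\mcC^5$ then have the same combinatorics: besides the triple concurrencies among the six lines, which have been excluded, the remaining incidences are forced --- the inflectional tangents meet $C$ only at the corresponding flexes (with multiplicity $3$), and, since $T_1,\gr{2}T_1,T_2$ have order $3$ while the vertices of $\mcL_1$ have order $9$, these tangents can neither pass through a vertex of $\mcL_1$ nor meet a line of $\mcL_1$ on $C$ --- so $\mcC^4$ and $\mcC^5$ both realise the generic combinatorial type of such an arrangement. The hard part is not conceptual but organisational: arranging the six lines as algebraic functions of a single parameter (for which the Hesse pencil is well suited, the triangle being handled by the passage to $B$) and exhibiting the one explicit non-concurrent configuration. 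A more structural variant would replace $B$ by an irreducible component of the moduli space of tuples $(C,O,T_1,T_2,\mcL_1)$ --- irreducible because it finitely covers the modular curve $Y(3)$ --- but the final non-vanishing verification still rests on the same explicit example.
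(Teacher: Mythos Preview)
Your approach is correct but takes a different route from the paper. Both arguments begin with the same observation---concurrency of three lines is a closed condition, so a single explicit example suffices---but the paper then works entirely by hand on the Fermat cubic $x^3+y^3+z^3=0$, while you set up a deformation framework in the Hesse pencil and delegate the explicit check to a computer. The paper's key economy is a geometric reduction: two lines of a triangle always meet on~$C$ (at a vertex, a point of order~$9$), whereas an inflectional tangent meets $C$ only at a flex (order~$3$), so a triple of the form (one inflectional tangent, two triangle lines) is \emph{never} concurrent; you in fact state this same observation in your last paragraph, but only to compare combinatorics, not to cut down the verification. With that reduction in hand the paper handles the remaining cases on the Fermat cubic by elementary means: $L_{T_1}\cap L_{\gr{2}T_1}=[0:0:1]$ lies on no tangent line other than $L_O,L_{T_1},L_{\gr{2}T_1}$, and then, rather than fixing $T_2$ in advance as you do, the paper chooses $T_2$ \emph{after} $\mcL_1$ by a pigeonhole count (six candidate inflectional tangents meet $L_{T_1}$ at six distinct points, only three of which are spoiled by $\mcL_1$). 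Your parametrised approach is more systematic and would generalise more readily, but it trades the paper's self-contained elementary argument for a computer verification that still has to be exhibited.
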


\begin{proof}
Since the condition that lines are concurrent is a closed condition, it is enough to find one example where the statement holds. The same arguments as in the beginning of the proof of Theorem~\ref{thm:main1}, we have that an inflectional tangent and two lines of a triangle are not concurrent, so we only need to check the case of two inflectional tangents and one line from a triangle.  

Let  $C$  be the Fermat cubic given by $x^3+y^3+z^3=0$. Then, the inflectional tangents are given by $x+\omega^jy=0$, $y+\omega^jz=0$, $z+\omega^jx=0$ $( j=0,1,2)$, where $\omega$ is a primitive third root of unity. Let $O=[1: -1: 0]$ and $L_O: x+y=0$ and consider the group action on  $C$  with $O$ being the neutral element. Let $T_1=[1:-\omega: 0]$ and $L_{T_1}: x+\omega^2y=0$. Since $[1:-1:0]$, $[1:-\omega:0]$, $[1:-\omega^2:0]$ are collinear, one has
$\gr{2}T_1=[1,-\omega^2,0]$ and $L_{\gr{2}T_1}: x+\omega y=0$. 
The intersection point $L_{T_1}\cap L_{\gr{2}T_1}=[0:0:1]$ cannot lie on any triangle $\mcL$ as the set of lines passing through $[0:0:1]$ and tangent to  $C$  is $\{L_O, L_{T_1},L_{\gr{2}T_1}\}$. Hence the two inflectional tangents $L_{T_1}, L_{\gr{2}T_1}$ and any line from any triangle $\mcL$ are not concurrent. 

Next,  choose a triangle $\mcL_1$ associated to $T_1$. Then the number of intersection points of $L_{T_1}$ and $\mcL_1$ is three. On the other hand, the six inflectional tangents,   $y+\omega^iz=0$, $z+\omega^ix=0$, $(j=0,1,2)$ intersect $L_{T_1}$ at six distinct points. Hence it is possible to choose $L^\prime$ from the six lines so that any three lines of $L_{T_1}$, $L^\prime$ and $\mcL_1$ are not concurrent. Then by setting $T_2$ to be the tangent point of  $C$  and $L^\prime$, we have $T_1$ and $T_2$ satisfying the desired conditions.
\end{proof}

We are going to use the results in \S\ref{sec:modular} to see that the tuples of Theorems~\ref{thm:main1} and~\ref{thm:main2} are maximal (i.e. no $4$-tuples in the first case, no triples in the second). 
Let $\Sigma_1$ (resp. $\Sigma_2$) be the realization space of curves with the combinatorics of the curves in Theorem~\ref{thm:main1} (resp. in Theorem~\ref{thm:main2} such that no three of lines are concurrent).  The proof of following propositions will be done in \S\ref{sec:modular}.

\begin{prop}\label{prop:connectivity}
The space $\Sigma_1$ has three connected components.
\end{prop}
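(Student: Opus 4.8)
The plan is to realize $\Sigma_1$ as (a quotient of) a parameter space built from the moduli of elliptic curves together with a choice of $3$-torsion data, and then to count components by keeping track of the discrete invariants that cannot vary in a connected family. The starting point is the description of the three curves $\mcC^1,\mcC^2,\mcC^3$ in Theorem~\ref{thm:main1}: the combinatorics is that of a smooth cubic together with six lines forming two triangles, each triangle determined by a point of order~$9$, and the isomorphism type of the curve is governed by the pair of associated $3$-torsion classes $(P_{\mcL},P_{\mcL'})$ in $E[3]\cong(\ZZ/3)^2$ (equivalently, by $(\bar P_\mcL,\bar P_{\mcL'})$ in $\pic^0(C)[3]$, by Remark~\ref{rem:rel-to-mthm}). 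First I would set up the total space: a family over the modular curve parametrizing $(E,\mcL,\mcL')$ — a cubic with a flex $O$ giving the group law, a point $P$ of order~$9$ (hence a triangle $\mcL_1=\mcL$), and a second point $P'$ of order~$9$ (hence $\mcL_1'=\mcL'$) — subject to the genericity condition that the configuration has the prescribed combinatorics. This is where the elliptic modular surface of level~$9$ (or its relevant quotient) enters, exactly as announced for \S\ref{sec:modular} and Theorem~\ref{thm:modular}.

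Next I would identify the discrete invariant that separates components. The Weil pairing $e_3:\pic^0(C)[3]\times\pic^0(C)[3]\to\mu_3$ is topologically determined (it does not change in a connected family of such arrangements, because it is read off from the combinatorially fixed incidence data via the splitting-number machinery of \S\ref{sec:torsion_splitting}), so the pair $(\bar P_\mcL,\bar P_{\mcL'})$ has a well-defined "type": (a) $\bar P_\mcL=\bar P_{\mcL'}\ne 0$; (b) $\bar P_{\mcL'}=2\bar P_\mcL$, i.e. $\bar P_{\mcL'}=-\bar P_\mcL$ and both are nonzero; (c) $\bar P_\mcL,\bar P_{\mcL'}$ are a basis of $(\ZZ/3)^2$, which up to the $\SL_2(\ZZ/3)$-action (realized by monodromy) is a single orbit. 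These three types are precisely the three cases of Theorem~\ref{thm:main1} and are pairwise distinguished by the orders $\ord(\tau(\bm a))$, hence by the embedded topology; so $\Sigma_1$ has \emph{at least} three components. The core of the argument is the reverse inequality: each of the three strata is itself connected.

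For connectedness of each stratum I would argue as follows. Fix the type. The monodromy of the level-$9$ elliptic modular surface over the $j$-line acts on the fibre $E[9]$ through a group containing $\SL_2(\ZZ/9)$ (this is the statement I would extract from \S\ref{sec:modular}); this action is transitive on points of exact order~$9$ with a prescribed image in $E[3]$, and more generally on pairs $(P,P')$ of order-$9$ points with a prescribed pair of images $(\bar P,\bar P')$ of the fixed type, once one also quotients by the residual symmetries of the configuration (swapping the two triangles when the type is symmetric, and relabelling vertices within a triangle, which does not change $\mcL$). Therefore any two arrangements of the same type are connected by a path first moving $j$ along the connected $j$-line, then applying a loop in the base whose monodromy carries one torsion configuration to the other, and finally using that the genericity locus (the complement of the closed "some three lines concurrent" locus, cf.\ the closedness argument in Proposition~\ref{prop:existence}) is connected — it is the complement of a proper closed subvariety in an irreducible variety. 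Assembling: $\Sigma_1$ has exactly three connected components, one per type, matching the three curves of the Zariski triple.

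The main obstacle I expect is the monodromy/transitivity step: one must show that the geometric monodromy on $E[9]$ (equivalently, on the order-$9$ points producing triangles) is large enough to act transitively within each $3$-torsion type — and, crucially, that this monodromy is actually realized by \emph{homeomorphisms of $\PP^2$ preserving the whole arrangement}, so that the path in the base genuinely stays inside $\Sigma_1$ rather than merely in a family of elliptic curves with level structure. This is exactly the content deferred to \S\ref{sec:modular} and Theorem~\ref{thm:modular}, where the elliptic modular surface is used to trivialize the $9$-torsion and the Weil pairing pins down the invariant; the remaining bookkeeping — checking that the genericity condition cuts out a Zariski-open dense (hence connected) subset and that no spurious extra components arise from the choice of flex $O$ (they do not, since $\bar P_\mcL$ is $O$-independent) — is routine once that theorem is in hand.
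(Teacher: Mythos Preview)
Your outline matches the paper's strategy: split $\Sigma_1$ into three strata $\Sigma_{1,1},\Sigma_{1,2},\Sigma_{1,3}$ according to the ``type'' of the pair $(\bar P_\mcL,\bar P_{\mcL'})$, then show each stratum is connected using the modular machinery of \S\ref{sec:modular}. The paper, however, does not argue via monodromy transitivity on torsion data; instead it exhibits for each type an explicit surjection from a space already proved connected by Theorem~\ref{thm:modular} and its corollary: $\Sigma_3^{9,3}(\xi_3)\to\Sigma_{1,1}$ (send $(C,O,P,Q)$ to the triangles through $P$ and $P\dot+Q$), $\Sigma_3^{9}\to\Sigma_{1,2}$ (triangles through $P$ and $\gr{8}P$), and $\Sigma_3^{9,9}(\zeta_9)\to\Sigma_{1,3}$ (triangles through $P$ and $Q$). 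Surjectivity is then checked by hand, and here two points in your sketch need repair. First, the Weil pairing $e_3(\bar P_\mcL,\bar P_{\mcL'})$ is trivial for \emph{both} types (a) and (b), so it is not the pairing that separates them; the separation comes from the full group-theoretic relation, detected topologically by splitting numbers as you say. Second, and more substantively, surjectivity for type~(b) is the one step that is \emph{not} routine: given vertices $P,P'$ with $\gr{3}(P\dot+P')=O$ one may well have $P\dot+P'\ne O$, and the paper resolves this by passing to a different flex $O':=\dot-(P\dot+P')$, after which $P'=-^*P$ in the new group law. Your monodromy phrasing ``transitive on points of exact order~$9$ with a prescribed image in $E[3]$'' is also imprecise---the full $\Sl_2(\ZZ/9)$ monodromy does not fix any element of $E[3]$---and the paper sidesteps this entirely by working directly with the connected spaces $\Sigma_3^{N,m}(\xi)$ rather than orbit-counting.
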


\begin{prop}\label{prop:connectivity2}
The space $\Sigma_2$ has two connected components.
\end{prop}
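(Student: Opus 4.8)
\textbf{Proof proposal for Proposition~\ref{prop:connectivity2}.}
The plan is to realize $\Sigma_2$ as an open subvariety of a moduli space of point configurations on elliptic curves and to count its components via the monodromy of the elliptic modular surface, exactly as will be done for Proposition~\ref{prop:connectivity} in \S\ref{sec:modular}. Concretely, a curve in $\Sigma_2$ is determined by the data $(E,O)$ of a cubic with a flex, a pair of independent $3$-torsion points $T_1,T_2\in E[3]$ (giving the two inflectional tangents $L_{T_1}$ and $L_{T_2}$; recall $L_{\gr{2}T_1}$ is then forced), and a point $P$ of order $9$ with $\gr{3}P=T_1$ (the vertex of the triangle $\mcL_1$), subject only to the open condition that no three of the four lines $L_{T_1},L_{\gr{2}T_1},L_{T_2},\mcL_1$ be concurrent. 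Since removing a proper closed subset does not change the number of connected components of a smooth variety, it suffices to count the components of the closed ``combinatorial'' moduli space $M$ of such tuples $(E,O,T_1,T_2,P)$, and then to check that $\Sigma_2\to M$ is a surjection with connected fibers (each fiber being the complement of finitely many points in a $\pgl_3$-orbit — here Proposition~\ref{prop:existence} guarantees nonemptiness and connectedness of the generic fiber).

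Next I would fiber $M$ over the modular curve parametrizing $(E,O)$ together with a chosen full level-$3$ structure, using Theorem~\ref{thm:modular} from \S\ref{sec:modular}: the relevant covering of the $j$-line is connected, so the only source of disconnectedness is the choice of the torsion data modulo the monodromy action. The monodromy group acting on $E[3]\times E[9]$ is the relevant congruence subgroup of $\Sl_2(\ZZ)$ (acting through $\Sl_2(\ZZ/9)$ on the $9$-torsion and through $\Sl_2(\ZZ/3)$ on the $3$-torsion compatibly via the reduction map); crucially the Weil pairing $e_3(\cdot,\cdot)$ on $E[3]$ is monodromy-invariant. So the components of $M$ are in bijection with the orbits of $\Sl_2(\ZZ/9)$ acting on the set of triples $(T_1,T_2,P)$ with $T_1,T_2$ a basis of $E[3]$, $P$ of order $9$, $\gr 3 P=T_1$. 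I would now reduce this to a purely group-theoretic orbit count: after using $\Sl_2(\ZZ/9)$-transitivity to normalize $P$ to a fixed generator of one $\ZZ/9$-factor (so $T_1=\gr3 P$ is also normalized), the residual stabilizer is the subgroup fixing $P$, and one computes how many orbits it has on the admissible choices of $T_2$. The discrete invariant that survives is the Weil pairing $e_3(T_1,T_2)\in\mu_3\setminus\{1\}$, which takes $2$ values and is clearly realized; the claim is that it is a \emph{complete} invariant, giving exactly two orbits, hence two components.

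The main obstacle I expect is precisely this last transitivity statement: showing that the stabilizer of a fixed order-$9$ point $P$ in $\Sl_2(\ZZ/9)$ acts transitively on the set of $T_2\in E[3]$ completing $T_1=\gr3P$ to a basis \emph{with a prescribed value of} $e_3(T_1,T_2)$. This is a finite computation in $\Sl_2(\ZZ/9)$ (or equivalently a careful analysis of the kernel of $\Sl_2(\ZZ/9)\to\Sl_2(\ZZ/3)$ and its action on the $3$-torsion, which is trivial, reducing matters to $\Sl_2(\ZZ/3)$), but it must be done correctly; an honest way is to note that the stabilizer of $P$ surjects onto the stabilizer of $T_1$ in $\Sl_2(\ZZ/3)\cong\fS_3$ acting on $E[3]$, and the stabilizer of a nonzero vector in $(\ZZ/3)^2$ under $\Sl_2(\ZZ/3)$ is transitive on the two cosets $\{T_2:e_3(T_1,T_2)=\zeta\}$ and $\{T_2:e_3(T_1,T_2)=\zeta^2\}$ separately but not on their union. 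One then has to double-check, on the geometric side, that two tuples in the same monodromy orbit genuinely give homeomorphic pairs $(\PP^2,\mcC)$ — which is automatic since they lie in the same connected component of $\Sigma_2$ — and that the two components are not merged by some extra symmetry (complex conjugation, or the hyperelliptic involution $\gr{-1}$), the latter being harmless because $\gr{-1}$ negates both $T_1$ and $T_2$ and hence preserves $e_3(T_1,T_2)$. Assembling these facts yields that $\Sigma_2$ has exactly two connected components, matching the two cases \ref{thm1-2a}, \ref{thm1-2b} of Theorem~\ref{thm1}.
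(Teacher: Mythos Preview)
Your parametrization of $\Sigma_2$ is incorrect, and this is the essential gap. An element of $\Sigma_2$ is a smooth cubic $C$ with two inflectional tangents $L_T,L_{T'}$ and a triangle $\mcL$; there is no a priori relation between the associated $3$-torsion class $\bar{P}_\mcL$ and the pair $\{T,T'\}$. By demanding $\gr{3}P=T_1$ with $T_1,T_2$ independent, you force the associated $3$-torsion point of the triangle to coincide with one of the two tangency flexes and to be independent of the other. A short computation (choose $O$ so that $P_\mcL^O=T$; then $T-O=\bar{P}_\mcL$ and $T'-O=(T'-T)+\bar{P}_\mcL$ in $\pic^0(C)$) shows this is possible precisely when $\bar{P}_\mcL\notin\langle T-T'\rangle$, i.e.\ only on the stratum~\ref{thm1-2b}. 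The stratum~\ref{thm1-2a}, where $\bar{P}_\mcL=\pm(T-T')$, never appears in your space $M$. Moreover, your proposed invariant $e_3(T_1,T_2)$ does not descend to $\Sigma_2$: the two inflectional tangents are unordered in the curve, and swapping $T\leftrightarrow T'$ (which forces a different choice of $O$) inverts this pairing, so your two $\Sl_2(\ZZ/9)$-orbits map onto the \emph{same} connected piece of $\Sigma_2$. The dichotomy of Theorem~\ref{thm1}\ref{thm1-2} is not a Weil-pairing dichotomy at all. (Your parenthetical about $L_{\gr{2}T_1}$ being ``forced'' also suggests you have conflated the auxiliary configuration of Proposition~\ref{prop:existence}, which treats the lines $L_{T_1},L_{\gr{2}T_1},L_{T_2}$ simultaneously, with the curves of $\Sigma_2$ themselves, which carry only two inflectional tangents.)

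The paper's argument proceeds differently. It first splits $\overline{\Sigma}_2$ along the intrinsic, $O$-independent condition $\bar{P}_\mcL\in\langle T-T'\rangle$, and then for each stratum lets the flex $O$ be \emph{determined by the curve} so that a connected modular space surjects onto it. For~\ref{thm1-2a} one takes $O$ to be the flex collinear with $T$ and $T'$ and obtains a surjection $\Sigma_3^9\to\overline{\Sigma}_{2,1}$ via $(C,O,P)\mapsto C+L_{\gr{3}P}+L_{\gr{-3}P}+\mcL_P$; for~\ref{thm1-2b} one takes $O$ with $P_\mcL^O\in\{T,T'\}$ and obtains a surjection $\Sigma_3^{9,3}(\xi_3)\to\overline{\Sigma}_{2,2}$ via $(C,O,P,Q)\mapsto C+L_{\gr{3}P}+L_Q+\mcL_P$, using precisely the swap $T\leftrightarrow T'$ to normalize the Weil pairing to the prescribed value~$\xi_3$.
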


\subsection{Cubics and higher degree curves}
\mbox{}

Next we consider a family of examples with irreducible components of higher degree. Let 
$E:=(C,O)$  be as before and let $P, Q\in  C $ be distinct points. 
Suppose that there exist curves   $C_1, C_2$ of degree $d$ such that 
\[
(\clubsuit)
\left\{
\begin{array}{ll}
		 C \cap C_1 \cap C_2=\{P, Q\} \quad (P\ne Q), \\
		( C , C_1)_P=( C , C_2)_Q=3d-1, \\
		( C , C_2)_P=( C , C_1)_Q=1. 
\end{array}
\right.
\]
Then, we have $d\geq 2$ and
\[
\gr{3d-1}P\dot+Q=P\dot+ \gr{3d-1}Q=O
\]
by the definition of the group law. This implies the equalities 
\[
\gr{3d(3d-2)}P=\gr{3d(3d-2)}Q=O
\]
and particularly that the order of $P, Q$ are divisors of $3d(3d-2)$. Furthermore, since $P\not=Q=\gr{-(3d-1)}P$ which implies  $\gr{3d}P\not=O$,  the order of $P$ cannot be a divisor of $3d$. 

Conversely, if  $P$ is a point whose order $r$ such
that $r$ divides $3d(3d-2)$ and $r$ does not divide $3d$,  there exist curves  $C_1, C_2$ of degree $d$ satisfying $(\clubsuit)$. 
Furthermore, in this situation we have 
\[
\ord_{ E  }(P\dot+Q)=\ord_{ E  }(\gr{2-3d}P)=\frac{r}{\gcd(3d-2, r)}.
\] 
In particular, $\ord_{ E  }(P\dot+Q)$ is a divisor of $3d$ as expected from the geometry of the curves.
In the case of $d=2$ where $C_1, C_2$ are conics, the possible orders of $P$ are $4, 8, 12$ or $24$ which give  $\ord_{ E  }(P\dot+Q)=1,2,3$ or $6$, respectively. This fact shows that Theorem~\ref{thm2} \ref{thm2-1} and \ref{thm2-2} in Introduction correspond to Theorems~\ref{thm:curve_3_dd} and \ref{thm:curve_3_1_dd}, respectively. In the case of $d=3$, the possible orders of $P$ are $7, 21$ or $63$  which give  $\ord_{ E  }(P\dot+Q)=1,3$ or $9$, respectively. 

\begin{thm}\label{thm:curve_3_dd}
Let $\mcC_i:=C_i+C_{i1}+C_{i2}$  for $i=1,2$, where $C_i$  are smooth cubics, and $C_{i1}, C_{i2}$ are curves of degree $d$ satisfying $(\clubsuit_i)$ which is $(\clubsuit)$ where $C,C_j,P$ and $Q$ are replaced with $C_i,C_{ij},P_i$ and $Q_i$, respectively. 
Assume that $C_{ij}$ are smooth, and that $\mcC_1$ and $\mcC_2$ have the same combinatorics. 

If for some inflectional points $O_i$ of $C_i$ $(i=1,2)$, $E_i:=(C_i,O_i)$,
\[\ord_{E_1}\Big(\gr{3}\big(P_1\dot{+}Q_1\big)\Big)\ne\ord_{E_2}\Big(\gr{3}\big(P_2\dot{+}Q_2\big)\Big), \]
then $(\mcC_1,\mcC_2)$ is a Zariski pair.

\end{thm}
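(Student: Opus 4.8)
The strategy is to realize $(\mcC_1,\mcC_2)$ as a pair of maximal-flex arrangements of type $(3;d,d)$ and apply Corollary~\ref{cor:torsion_cubic}~\ref{cor:torsion_cubic_i}. First I would set $[\WCC_i]:=(C_i;C_{i1},C_{i2})$, so that $k=2$, $d_1=d_2=d$. From $(\clubsuit_i)$ the local intersection multiplicities of $C_i$ with $C_{ij}$ are $3d-1$ and $1$, so $m_{i1}=m_{i2}=\gcd(3d-1,1)=1$. Consequently $\fd_j[\WCC_i]=C_{ij}|_{C_i}$ and, for $\bm{a}=(a_1,a_2)$, $n_{\bm{a}}=\gcd(a_1,a_2,(a_1+a_2)d)=\gcd(a_1+a_2,d)$ using $\gcd(a_1,a_2)=1$; in particular taking $\bm{a}=(1,1)$ gives $n_{(1,1)}=\gcd(2,d)$, but I will rather want the order of $\dot\tau_i(\bm{a})$ to see $\gr{3}(P_i\dot+Q_i)$, so I would instead work with $\bm{a}=(3,3)$ after reduction, or more cleanly observe that since $m_{ij}=1$ one has $P_{[\WCC_i],1}^O=P_i$ and $P_{[\WCC_i],2}^O=Q_i$, whence $\dot\tau_{[\WCC_i]}^{O_i}(1,1)=\gr{1/n_{(1,1)}}(\ldots)$ — and here the relevant torsion element to extract is $\gr{3}(P_i\dot+Q_i)$, which by the displayed computation before the theorem equals $\gr{3}\gr{2-3d}P_i$, of order $r_i/\gcd(3d-2,r_i)\cdot(\text{correction by }3)$, a divisor of $3d$.

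Second, I would check the combinatorial hypotheses of Corollary~\ref{cor:torsion_cubic}: we are given that $\mcC_1$ and $\mcC_2$ have the same combinatorics, and I must verify that every equivalence map $\varphi:\comb(\WCC_1)\to\comb(\WCC_2)$ is admissible to $([\WCC_1],[\WCC_2])$. This is where one uses that $C_i$ is the unique smooth cubic component (degree and smoothness pin it down, so $\varphi$ must send $C_1$ to $C_2$), and that the two degree-$d$ curves $C_{i1},C_{i2}$ play asymmetric roles at $P$ and $Q$ (one meets $C$ with multiplicity $3d-1$, the other transversely), so $\varphi$ respects the decomposition of $\WCB_i$; the induced permutation of $\{1,2\}$ is then well-defined, i.e. $\adper([\WCC_1],[\WCC_2])\subseteq\{\mathrm{id},(1\,2)\}$. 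I would then note that the arrangement is \emph{symmetric} under swapping the two conics combined with swapping $P\leftrightarrow Q$, so $\ord_{E_i}(\dot\tau_i(\bm a))$ is unchanged under $\bm a\mapsto(1\,2)(\bm a)$ for the specific $\bm a$ used; hence it suffices to produce a single $\bm{a}$ working for both $\rho=\mathrm{id}$ and $\rho=(1\,2)$.

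Third, the arithmetic input: take $\bm{a}_\rho=(1,1)$ for every $\rho$. By the remark preceding the theorem, $\ord_{E_i}(\dot\tau_i(1,1))$ records (a multiple related to) $\ord_{E_i}(P_i\dot+Q_i)$, and combined with the observation that $\ord_{E_i}(P_i\dot+Q_i)=r_i/\gcd(3d-2,r_i)$ divides $3d$, the quantity $\ord_{E_i}(\gr3(P_i\dot+Q_i))$ is exactly the invariant appearing in the hypothesis. Since by assumption $\ord_{E_1}(\gr3(P_1\dot+Q_1))\neq\ord_{E_2}(\gr3(P_2\dot+Q_2))$, and this order is invariant under the $(1\,2)$-swap, we get $\ord_{E_1}(\dot\tau_1(\bm a_\rho))\neq\ord_{E_2}(\dot\tau_2(\rho(\bm a_\rho)))$ for both $\rho$, so Corollary~\ref{cor:torsion_cubic}~\ref{cor:torsion_cubic_i} applies and $(\mcC_1,\mcC_2)$ is a Zariski pair.

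\textbf{Main obstacle.} The delicate step is the bookkeeping translating between the ``$O$-version'' torsion class $\dot\tau^O$ (whose order is a divisor of $n_{\bm a}$, here a divisor of $\gcd(2,d)$) and the genuinely interesting invariant $\ord_{E_i}(\gr3(P_i\dot+Q_i))$, which is a divisor of $3d$. The point is that $n_{(1,1)}$ is small, so $\dot\tau_i(1,1)$ alone does not see $\gr3(P_i\dot+Q_i)$; one must instead either choose $\bm a$ cleverly (exploiting that $P_{[\WCC_i],j}^O$ are literally $P_i,Q_i$ because $m_{ij}=1$, and using divisibility of the $P_{[\WCC_i],j}^O$ by large torsion orders) or, more honestly, invoke Corollary~\ref{cor:torsion_cubic} with the full group generated by the $\dot\tau_i$ and argue that the cyclic group it generates inside $E_i$ has order divisible by $\ord_{E_i}(\gr3(P_i\dot+Q_i))$, so that non-isomorphism of these orders forces non-isomorphism of the kernels/splitting data. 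Getting this reduction clean — and making sure the permutation ambiguity $(1\,2)$ genuinely does not interfere — is the crux; everything else is a routine check of intersection multiplicities and combinatorial rigidity.
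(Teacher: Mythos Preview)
Your decomposition $[\WCC_i]=(C_i;C_{i1},C_{i2})$ with $k=2$ is precisely the one that \emph{fails}, and the paper explicitly warns about this in the remark following the theorem. With this choice $m_{i1}=m_{i2}=\gcd(3d-1,1)=1$, and then for any $\bm a=(a_1,a_2)\in\Theta_2$ one has
\[
n_{\bm a}=\gcd(a_1m_1,a_2m_2,a_1d_1+a_2d_2)=\gcd(a_1,a_2,(a_1+a_2)d)=1,
\]
because $\gcd(a_1,a_2)=1$ by the definition of $\Theta_2$. (Your simplification to $\gcd(a_1+a_2,d)$ is incorrect: once $\gcd(a_1,a_2)=1$, the gcd of $a_1,a_2$ and anything else is automatically~$1$.) Since $m_j\ft_{i,j}\sim0$ and $m_j=1$, each $\ft_{i,j}$ is trivial, so $\tau_i(\bm a)\equiv0$ for every $\bm a$; Corollary~\ref{cor:torsion_cubic} then gives no information whatsoever. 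Your ``main obstacle'' paragraph correctly senses that $n_{(1,1)}$ is too small, but the proposed workarounds (choosing $\bm a$ cleverly, looking at the group generated by the $\dot\tau_i$) cannot help, because all of these objects are identically zero.

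The paper's proof avoids this by grouping the two degree-$d$ curves into a single reducible component: it sets $[\mcC_i]:=(C_i;\,C_{i1}+C_{i2})$, a maximal-flex arrangement of type $(3;2d)$ with $k=1$. Now the intersection multiplicities of $C_i$ with $C_{i1}+C_{i2}$ at $P_i$ and $Q_i$ are both $(3d-1)+1=3d$, so $m_1=3d$, and with $\Theta_1=\{(1)\}$ one gets $n_{(1)}=\gcd(3d,2d)=d$ and
\[
\dot\tau_{[\mcC_i]}^{O_i}(1)=\gr{\tfrac{m_1}{n_{(1)}}}\bigl(P_i\dot+Q_i\bigr)=\gr{3}\bigl(P_i\dot+Q_i\bigr),
\]
which is exactly the invariant in the hypothesis. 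Admissibility and $\adper=\{\id\}$ (now trivially, since $k=1$) are checked as you outlined, using smoothness of the $C_{ij}$ to see that any equivalence map must send $\{C_{11},C_{12}\}$ to $\{C_{21},C_{22}\}$, hence $C_{11}+C_{12}$ to $C_{21}+C_{22}$. The missing idea in your proposal is this regrouping of the branch curve.
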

\begin{proof}

Put $[\mcC_i]:=(C_i;C_{i1}+C_{i2})$. 

Let $\varphi:\comb(\mcC_1)\to\comb(\mcC_2)$ be an equivalence map. 
Note that $C_{i1}$ and $C_{i2}$ intersect transversally at $P_i$ and $Q_i$ since $d>1$ and $C_{i j}$ are smooth. 
Thus $\varphi_{\irr}(C_{11})$ and $\varphi_{\irr}(C_{12})$ intersect transversally at $P_2$ and $Q_2$. 
Since 

$(C_2,C_{21})_{P_2}=(C_2,C_{22})_{Q_2}>1$, 

we have $\{\varphi_{\irr}(C_{11}),\varphi_{\irr}(C_{12})\}=\{C_{21},C_{22}\}$. 
Hence the equivalence map $\varphi$ is admissible to $([\mcC_1],[\mcC_2])$. 
Moreover, we obtain $\adper([\mcC_1],[\mcC_2])=\{\id\}$. 

We have $d_1=2d$ and $m_1=3d$. Since $\Theta_1=\{(1)\}$, and $n_{(1)}=d$, 
\[ \ord\left(\dot{\tau}_{[\mcC_i]}^O(1)\right)=\ord_{(E_i,O_i)}\Big(\gr{3}(P_i\dot{+}Q_i)\Big). \]
Hence $(\mcC_1,\mcC_2)$ is a Zariski pair by Corollary~\ref{cor:torsion_cubic}. 
\end{proof}

\begin{rem}

Let $\mcC_i$ be as in Theorem~\ref{thm:curve_3_dd}. We may try to apply 
Corollary~\ref{cor:torsion_cubic} to distinguish $[\mcC_i]:=(E_i;C_{i1},C_{i2})$;
it is not possible since $m_1=m_2=1$, $n_{\bm{a}}=1$ for any $\bm{a}\in\Theta_2$ by the definition of $\Theta_2$. This is why Theorem~\ref{thm:curve_3_dd} is needed.

\end{rem}

Next consider curves $\mcC_i^+$, where an  inflectional tangent is added to  $\mcC_i$ in Theorem~\ref{thm:curve_3_dd}:
\[
\mcC_i^+:= C_i  +L_i+C_{i1}+C_{i2} \quad (i=1,2).
\]
Here $ C_i  ,C_{ij}$ are as in Theorem~\ref{thm:curve_3_dd}, and $L_i$ is an inflectional tangent of $ C_i  $ at~$O_i$. 
By the following theorem, we can find more candidates of the form $\mcC_i^+$ for Zariski tuples than the form $\mcC_i$. 

\begin{thm}\label{thm:curve_3_1_dd}
Let $\mcC^+_i$ $(i=1,2)$ be two plane curves as above. 
Assume that $C_{ij}$ are smooth, and that $\mcC^+_1$ and $\mcC^+_2$ have the same combinatorics. 

If $\ord_{E_1}(P_1\dot{+}Q_1)\ne\ord_{E_2}(P_2\dot{+}Q_2)$ then $(\mcC^+_1,\mcC^+_2)$ is a Zariski pair.

\end{thm}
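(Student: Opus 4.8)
The plan is to realize each $\mcC^+_i$ as a maximal-flex arrangement of type $(3;1,2d)$ by declaring $[\mcC^+_i]:=(C_i;L_i,\mcB_i)$ with $\mcB_i:=C_{i1}+C_{i2}$ treated as a single (reducible) component of degree $2d$, and then to apply Corollary~\ref{cor:torsion_cubic} with a suitable $\bm{a}_0$. Grouping $C_{i1}$ and $C_{i2}$ into one component is the crucial point: if one kept the three components $L_i,C_{i1},C_{i2}$ separate, then the relations $\gr{3d-1}P_i\dot+Q_i=P_i\dot+\gr{3d-1}Q_i=O_i$ coming from $(\clubsuit_i)$ force $\ft_1=\ft_2=\ft_3=0$, so the machinery of \S\ref{sec:torsion_splitting} detects nothing; likewise taking $(C_i;L_i+C_{i1}+C_{i2})$ with $k=1$ produces only a multiple of $P_i\dot+Q_i$ whose order divides $3d$ and need not match $\ord_{E_i}(P_i\dot+Q_i)$. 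This is also why a generic line as in \cite{pic0} is powerless here (cf.\ Remark~\ref{rem:not-pic0_2}).

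First I would record the local data. Since $L_i$ is the inflectional tangent of $C_i$ at $O_i$, we have $C_i\cap L_i=\{O_i\}$ with $(C_i,L_i)_{O_i}=3$, so $m_1=3$, $\fd_1[\mcC^+_i]=O_i$ and $\ft_{i,1}=\fd_1[\mcC^+_i]-\tfrac{3\cdot 1}{3}O_i=0$. Since $C_i\cap\mcB_i=\{P_i,Q_i\}$ with $(C_i,\mcB_i)_{P_i}=(3d-1)+1=3d=1+(3d-1)=(C_i,\mcB_i)_{Q_i}$, we get $m_2=3d$, $\fd_2[\mcC^+_i]=P_i+Q_i$, hence
\[
\ft_{i,2}=P_i+Q_i-\frac{3\cdot 2d}{3d}O_i=P_i+Q_i-2O_i,
\]
which under the isomorphism $E_i\to\pic^0(C_i)$, $R\mapsto R-O_i$, corresponds to $P_i\dot+Q_i$. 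Next I would check that combinatorial equivalence forces admissibility and that $\adper([\mcC^+_1],[\mcC^+_2])=\adper[\mcC^+_i]=\{\id\}$: arguing as in the proof of Theorem~\ref{thm:curve_3_dd}, the line $L_i$ is the only component of degree $1$, and $C_{i1},C_{i2}$ form the only pair of (smooth, degree $d$) components meeting each other transversally in two points, so any equivalence map sends $L_1\mapsto L_2$ and $\{C_{11},C_{12}\}\mapsto\{C_{21},C_{22}\}$, hence $C_1\mapsto C_2$, respecting the decomposition $\mcC^+_i=C_i+L_i+\mcB_i$; since $d_1=1\ne 2d=d_2$, the only admissible permutation of $\{1,2\}$ is the identity. (When $d=3$ one separates $C_i$ from the $C_{ij}$ by their incidences with $L_i$; for the case $d=2$ relevant to Theorem~\ref{thm2} all degrees are distinct.)

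Finally, take $\bm{a}_0:=(d,1)\in\Theta_2$ (indeed $\gcd(d,1)=1$). Then $n_{\bm{a}_0}=\gcd(d\,m_1,\,m_2,\,d\cdot 1+1\cdot 2d)=\gcd(3d,3d,3d)=3d$, so
\[
\tau_i(\bm{a}_0)=\frac{d\,m_1}{n_{\bm{a}_0}}\ft_{i,1}+\frac{m_2}{n_{\bm{a}_0}}\ft_{i,2}=0+\ft_{i,2},
\]
whence $\ord_{E_i}(\dot\tau_i(\bm{a}_0))=\ord_{E_i}(P_i\dot+Q_i)$ (independently of the flex, by Lemma~\ref{lem:equiv_O}). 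Since $\adper([\mcC^+_1],[\mcC^+_2])=\{\id\}$, the hypothesis $\ord_{E_1}(P_1\dot+Q_1)\ne\ord_{E_2}(P_2\dot+Q_2)$ is exactly $\ord_{E_1}(\dot\tau_1(\bm{a}_0))\ne\ord_{E_2}(\dot\tau_2(\bm{a}_0))$, so Corollary~\ref{cor:torsion_cubic} \ref{cor:torsion_cubic_i} applies and $(\mcC^+_1,\mcC^+_2)$ is a Zariski pair. The one delicate choice is the first one: grouping $C_{i1}+C_{i2}$ together and picking $\bm{a}_0=(d,1)$ so that $\tau_i(\bm{a}_0)$ is $P_i\dot+Q_i$ on the nose rather than the multiple $\gr{3}(P_i\dot+Q_i)$ forced on us in Theorem~\ref{thm:curve_3_dd} — this is precisely what makes adding the flex line $L_i$ worthwhile (for $d=2$ it separates order $1$ from $3$ and order $2$ from $6$); everything else is bookkeeping of intersection numbers plus the standard rigidity argument for admissibility.
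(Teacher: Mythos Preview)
Your proof is correct and follows essentially the same approach as the paper: the same decomposition $[\mcC^+_i]=(C_i;L_i,C_{i1}+C_{i2})$, the same local data $m_1=3$, $m_2=3d$, the same choice $\bm{a}_0=(d,1)$ giving $n_{\bm{a}_0}=3d$ and $\dot\tau_i(\bm{a}_0)=P_i\dot+Q_i$, and the same appeal to Corollary~\ref{cor:torsion_cubic} after the admissibility argument inherited from Theorem~\ref{thm:curve_3_dd}. Your additional commentary on why the other decompositions fail is correct and helpful, but the proof itself is the paper's proof with more details spelled out.
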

\begin{proof}
Put $[\mcC_i^+]:=( C_i  ;L_i,C_{i1}+C_{i2})$. 
Since $\deg C_{ij},\deg  C_i  >\deg L_i$, we can prove by the same argument of the proof of Theorem~\ref{thm:curve_3_dd} that any equivalence map $\varphi:\comb([\mcC_1^+],[\mcC_2^+])$ is admissible to $([\mcC_1^+],[\mcC_2^+])$, and $\adper([\mcC_1^+],[\mcC_2^+])=\{\id\}$. 
We have $d_1=1, d_2=2d, m_1=3$ and $m_2=3d$. 
Since $n_{(d,1)}=3d$, we obtain

\[ \ord_{E_i)}\left(\dot{\tau}_{[\mcC_i^+]}^O(d,1)\right)=\ord_{E_i)}\left(\gr{d}O_i\dot{+}P_i\dot{+}Q_i\right)=\ord_{E_i}
\left(P_i\dot{+}Q_i\right). \]

Therefore, $(\mcC_1^+,\mcC_2^+)$ is a Zariski pair by Corollary~\ref{cor:torsion_cubic}. 
\end{proof}

\begin{rem}\label{rem:not-pic0_2}
By the same reason in Remark~\ref{rem:not-pic0}~\ref{rem:not-pic0_not}, Theorem~\ref{thm:curve_3_1_dd} cannot be proved  using  Theorem~\ref{thm:torsion0_cubic}. 
\end{rem}

By Theorem~\ref{thm:curve_3_dd} and \ref{thm:curve_3_1_dd}, we obtain many candidates of Zariski tuples. However, in order to truly obtain Zariski tuples, we need to check if the curves all have the same combinatorics. For $d=2$, we have made explicit computations to obtain the following:
\begin{prop}\label{prop:curve_3_dd}
For $d=2$ and each $r=4,8,12$ and $24$, there exists an elliptic curve $E$, a point $P\in E$ of order $r$, smooth conics $C_1, C_2$ satisfying $(\clubsuit)$ such that any two of $L_O, C_1, C_2$ intersect transversely and 
$L_O\cap C_1\cap C_2=\emptyset$. 
\end{prop}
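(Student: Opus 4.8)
The plan is to exhibit, for each required order $r \in \{4,8,12,24\}$, a single explicit elliptic curve $E$ together with a rational (or small-number-field) point $P$ of order $r$, and then to construct the two conics $C_1, C_2$ satisfying $(\clubsuit)$ by the osculation recipe discussed just above the statement. Concretely, given a smooth cubic $C$ with a flex $O$, the curve $C_1$ should be the conic with $(C,C_1)_P = 3d-1 = 5$ and $(C,C_1)_Q = 1$ where $Q = \gr{-(3d-1)}P = \gr{-5}P$; such a conic exists because a conic meets $C$ in $2d = 6$ points counted with multiplicity, and the divisor $5P + Q$ on $C$ has degree $6$ and is linearly equivalent to $2 L_O|_C = 6O$ precisely when $\gr{5}P \dot+ Q = O$, which holds by construction. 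Symmetrically $C_2$ is the conic with a $5$-fold contact at $Q$ and transverse intersection at $P$. The existence of $C_1$ as an \emph{honest} conic (irreducible, smooth, not a pair of lines or a double line) is the point requiring care, and I would verify it by explicit computation in the chosen coordinates.

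First I would locate suitable curves in the LMFDB tables (as the authors announce in the introduction, via \cite{lmfdb}): one needs cubics whose Mordell--Weil group, or torsion over a small number field, contains a point of each order $4, 8, 12, 24$. For $r=4$ one can work over $\QQ$; for $r=8,12,24$ one passes to a small extension where the relevant torsion point becomes rational, exactly as flagged before Proposition~\ref{prop:curve_3_dd}. Having fixed $E$, $O$ and $P$, I would write down $P$ and $Q = \gr{-5}P$ in projective coordinates, then solve the linear system cutting out conics through the prescribed osculating data: "conic osculating $C$ to order $5$ at $P$" is $5$ linear conditions on the $6$-dimensional space of conics, and "passing through $Q$" is one more, so generically a unique conic $C_1$; similarly $C_2$. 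This is all done with \texttt{Sagemath}, consistent with the paper's stated use of \cite{sagemath} and \cite{binder}.

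Next I would check the genericity clauses: that $C_1, C_2$ are smooth conics, that $C_1$ and $C_2$ meet each of $C$, and each other, as prescribed (in particular $C \cap C_1 \cap C_2 = \{P,Q\}$ with the stated multiplicities), that the three pairwise intersections among $L_O, C_1, C_2$ are transverse, and that $L_O \cap C_1 \cap C_2 = \emptyset$. Each of these is an open condition, so it suffices to verify it numerically (or symbolically) in the one explicit example for each $r$; the intersection multiplicities are forced by the construction, and the transversality/emptiness conditions are checked by computing the finitely many intersection points and their local intersection numbers. Since $r > 3d = 6$ in cases $r = 8, 12, 24$ (and $r = 4$ does not divide $6$), the arithmetic hypotheses "$r \mid 3d(3d-2) = 24$ and $r \nmid 3d = 6$" of the earlier discussion are satisfied, so the configuration is combinatorially of the claimed type.

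The main obstacle I expect is not conceptual but computational robustness: ensuring that for the chosen $E$ and $P$ the osculating conic $C_1$ does not degenerate (e.g.\ into the tangent line at $P$ counted twice plus a transverse line, which would still satisfy the intersection-multiplicity bookkeeping on $C$ but violate "$C_1$ smooth"), and that the ambient configuration avoids the excluded coincidences $L_O\cap C_1\cap C_2 \ne \emptyset$. Both are handled by an explicit search over the available LMFDB cubics and an explicit Gröbner-basis / resultant check in \texttt{Sagemath}, with the worked examples and verification code recorded in the Appendix and on \texttt{Binder} as the introduction promises. Thus the proof reduces to exhibiting four explicit tuples $(E,P,C_1,C_2)$ and certifying the finitely many open conditions for each.
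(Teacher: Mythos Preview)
Your proposal is correct and follows essentially the same approach as the paper: pick an explicit cubic from the LMFDB (the paper commits to the single curve \texttt{90c3}, whose $\mathbb{Q}$-torsion is $\mathbb{Z}/12$, so $r=4,12$ are rational and $r=8,24$ appear over a degree-$4$ extension), construct $C_1,C_2$ as the osculating conics determined by the divisors $5P+Q$ and $P+5Q$, and verify the smoothness and transversality conditions by a \texttt{Sagemath} computation recorded in the accompanying repository. Your outline is in fact more explicit about the linear-algebra construction of the conics and the potential degenerations to watch for than the paper itself, which simply defers all details to the code.
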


\begin{rem}
One of the  elliptic curves of the above proposition is the cubic \cite[\href{http://www.lmfdb.org/EllipticCurve/Q/11.a2}{90c3}]{lmfdb}, with equation $y^{2} z+ y z^{2} +x y z =x^{3} - x^{2} z  - 122 x z^{2} + 1721 z^{3}$
together with the flex $[0:1:0]$. 
Its torsion over
$\mathbb{Q}$ is isomorphic to $\mathbb{Z}/12$, and we can directly find points of $4$ and $12$-torsion to deal with.
Passing to extensions of order~$4$, we find points of $8$ and $24$-torsion.
The details of the computation have been checked using \texttt{Sagemath}~\cite{sagemath}; details
can be found in the folder \texttt{ExistenceOfCurves} of
\href{https://github.com/enriqueartal/TorsionDivisorsCubicZariskiPairs}%
{\texttt{https:\!/\!/github.com/enriqueartal/TorsionDi\-visorsCubicZar\-iskiPairs}}
and it can be checked following the suitable \texttt{Binder} link~\cite{binder}.
\end{rem}

\begin{cor}\label{cor:high}
For $d=2$, the followings hold. 

\begin{enumerate}[label=\rm\arabic{enumi}.]
	\item There exists a Zariski pair $(\mcC_1,\mcC_2)$, where $\mcC_i$ are curves as in Theorem{\rm~\ref{thm:curve_3_dd}}. 
	\item There exists a Zariski $4$-tuple $(\mcC_1^+,\dots,\mcC_4^+)$, where $\mcC_i^+$ are curves as in Theorem{\rm~\ref{thm:curve_3_1_dd}}. 
\end{enumerate}
\end{cor}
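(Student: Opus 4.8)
The plan is to assemble Corollary~\ref{cor:high} directly from the two Zariski-pair criteria of Theorems~\ref{thm:curve_3_dd} and~\ref{thm:curve_3_1_dd} together with the explicit existence result of Proposition~\ref{prop:curve_3_dd}. First I would recall that for $d=2$ the discussion preceding Theorem~\ref{thm:curve_3_dd} computes, for a point $P$ of order $r\in\{4,8,12,24\}$ with $Q=\gr{2-3d}P=\gr{-5}P$, the invariants $\ord_{E}(P\dot+Q)=r/\gcd(4,r)=1,2,3,6$ respectively, and hence also $\ord_{E}(\gr{3}(P\dot+Q))=1,1,1,2$ for these four cases. So the data needed to separate the curves are already tabulated; the task is to feed the right pairs into the theorems.

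For statement~1, I would pick, using Proposition~\ref{prop:curve_3_dd}, an elliptic curve $E_1$ with a point of order $4$ and an elliptic curve $E_2$ with a point of order $24$ (any two of the four values $r$ whose images under $r\mapsto r/\gcd(4,r)$ are not equal would do), together with the conics $C_{i1},C_{i2}$ satisfying $(\clubsuit_i)$ and the transversality/empty-triple-intersection conditions guaranteed by that proposition. These transversality conditions are exactly what is needed to ensure that $\mcC_1=C_1+C_{11}+C_{12}$ and $\mcC_2=C_2+C_{21}+C_{22}$ have the same combinatorics, so the hypotheses of Theorem~\ref{thm:curve_3_dd} are met; since $\ord_{E_1}(\gr{3}(P_1\dot+Q_1))=1\ne 2=\ord_{E_2}(\gr{3}(P_2\dot+Q_2))$, that theorem yields the Zariski pair. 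For statement~2, I would instead choose four elliptic curves $E_1,\dots,E_4$ realizing all four values $r=4,8,12,24$ (again via Proposition~\ref{prop:curve_3_dd}), add to each the inflectional tangent $L_i$ at $O_i$ to form $\mcC_i^+=C_i+L_i+C_{i1}+C_{i2}$, observe that the combinatorics are still shared (the inflectional tangent contributes a fixed, combinatorially rigid configuration, and the empty-triple-intersection condition keeps the incidences constant), and then apply Theorem~\ref{thm:curve_3_1_dd}: the invariant there is $\ord_{E_i}(P_i\dot+Q_i)=1,2,3,6$, all four values distinct, so pairwise application of the theorem shows $(\mcC_1^+,\dots,\mcC_4^+)$ is a Zariski $4$-tuple.

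The only genuine point requiring care — and the step I expect to be the main (mild) obstacle — is checking that the members of each family really do share combinatorics, i.e. that the intersection pattern of the cubic, the conics, and (in part~2) the inflectional tangent is the same across all the explicit curves produced by Proposition~\ref{prop:curve_3_dd}. This is where the transversality clauses ``any two of $L_O,C_1,C_2$ intersect transversely'' and ``$L_O\cap C_1\cap C_2=\emptyset$'' in Proposition~\ref{prop:curve_3_dd} do their work: together with the defining conditions $(\clubsuit)$ — which fix the local intersection multiplicities $3d-1$ and $1$ of $C$ with $C_{ij}$ at $P_i,Q_i$ and force $C_{i1},C_{i2}$ to meet transversely there (as in the proof of Theorem~\ref{thm:curve_3_dd}) — they pin down every singular point of $\mcC_i$ and $\mcC_i^+$ and its topological type, so the combinatorics are indeed constant. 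Everything else is a direct citation of the already-proved statements, so the corollary follows.

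\begin{proof}
For $d=2$ the computation preceding Theorem~\ref{thm:curve_3_dd} shows that if $P$ is a point of order $r$ and $Q=\gr{2-3d}P$, then $\ord_{E}(P\dot+Q)=r/\gcd(4,r)$, which equals $1,2,3,6$ for $r=4,8,12,24$ respectively; consequently $\ord_{E}(\gr{3}(P\dot+Q))$ equals $1,1,1,2$ for $r=4,8,12,24$.

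1. By Proposition~\ref{prop:curve_3_dd} applied with $r=4$ and $r=24$, there are elliptic curves $E_1,E_2$ with points $P_1\in E_1$ of order $4$ and $P_2\in E_2$ of order $24$, and smooth conics $C_{i1},C_{i2}$ satisfying $(\clubsuit_i)$ such that any two of $L_{O_i},C_{i1},C_{i2}$ meet transversely and $L_{O_i}\cap C_{i1}\cap C_{i2}=\emptyset$. Set $\mcC_i:=C_i+C_{i1}+C_{i2}$. The conditions $(\clubsuit_i)$ fix the local intersection multiplicities of $C_i$ with $C_{ij}$ at $P_i,Q_i$ and force $C_{i1},C_{i2}$ to meet transversely there, while the transversality clauses fix all remaining incidences; hence $\mcC_1$ and $\mcC_2$ have the same combinatorics, and the conics $C_{ij}$ are smooth. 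Since $\ord_{E_1}(\gr{3}(P_1\dot+Q_1))=1\ne 2=\ord_{E_2}(\gr{3}(P_2\dot+Q_2))$, Theorem~\ref{thm:curve_3_dd} shows that $(\mcC_1,\mcC_2)$ is a Zariski pair.

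2. By Proposition~\ref{prop:curve_3_dd} applied with $r=4,8,12,24$, we obtain elliptic curves $E_1,\dots,E_4$ with points $P_i$ of these orders and smooth conics $C_{i1},C_{i2}$ as above. Put $\mcC_i^+:=C_i+L_i+C_{i1}+C_{i2}$, where $L_i$ is the inflectional tangent of $C_i$ at $O_i$. As in part~1, together with the fact that $L_i$ adds the same combinatorially rigid configuration to each member and $L_{O_i}\cap C_{i1}\cap C_{i2}=\emptyset$, the curves $\mcC_1^+,\dots,\mcC_4^+$ all have the same combinatorics, and $C_{ij}$ are smooth. The four values $\ord_{E_i}(P_i\dot+Q_i)=1,2,3,6$ are pairwise distinct, so Theorem~\ref{thm:curve_3_1_dd} applied to each pair shows that $(\mcC_1^+,\mcC_2^+,\mcC_3^+,\mcC_4^+)$ is a Zariski $4$-tuple.
\end{proof}
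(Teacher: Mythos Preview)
Your proof is correct and follows essentially the same route as the paper's: invoke Proposition~\ref{prop:curve_3_dd} to produce the four configurations with $r=4,8,12,24$, check that the transversality clauses force identical combinatorics, and apply Theorems~\ref{thm:curve_3_dd} and~\ref{thm:curve_3_1_dd} using the tabulated values of $\ord_E(P\dot+Q)$ and $\ord_E(\gr{3}(P\dot+Q))$. One harmless slip: in your preliminary table you state $\ord_E(\gr{3}(P\dot+Q))=1$ for $r=8$, but since $\ord_E(P\dot+Q)=2$ and $\gcd(3,2)=1$ this value is $2$, not $1$ (so the correct list is $1,2,1,2$); this does not affect your argument because in part~1 you use $r=4$ and $r=24$, whereas the paper happens to use $r=4$ and $r=8$.
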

\begin{proof}
By Proposition~\ref{prop:curve_3_dd}, there exist smooth cubics 

(with fixed flexes) 

$E_i$, smooth conics $C_{ij}$, inflectional tangents $L_i$ of $E_i$  for $i=1,\dots,4$ and $j=1,2$ such that $C_{i1}$, $C_{i2}$ satisfy $(\clubsuit_i)$, any two of $L_i,C_{i1}, C_{i2}$ intersect transversely, $L_i\cap C_{i1}\cap C_{i2}=\emptyset$, and 
the order of $P_1$, $P_2$, $P_3$ and $P_4$ are $4$, $8$, $12$ and $24$, respectively. 
Then $\mcC_i:=E_i+C_{i1}+C_{i2}$ have the same combinatorics, and $\mcC_i^+:=\mcC_i+L_i$ have the same combinatorics. Moreover, we have 
\[ \ord(P_1\dot{+}Q_1)=1, \quad \ord(P_2\dot{+}Q_2)=2, \quad \ord(P_3\dot{+}Q_3)=3, \quad \ord(P_4\dot{+}Q_4)=6. \]
Hence $(\mcC_1,\mcC_2)$ is a Zariski pair by Theorem~\ref{thm:curve_3_dd}, and $(\mcC_1^+,\dots,\mcC_4^+)$ is a Zariski $4$-tuple by Theorem~\ref{thm:curve_3_1_dd}. 
\end{proof}

\begin{rem}
We note that the examples given above can be considered as a degenerated version of curves studied by I. Shimada in \cite{shimada2} and the third named author in \cite{shirane2016}. In their case $\mcB$ was a smooth curve of degree $d$, where as in our case  $\mcB$ is a reducible curve of degree $2d+1$. The above example allows us to produce Zariski tuples of  the same cardinality however consisting of  curves  with smaller degree compared to Shimada's curves.
\end{rem}

\section{Fundamental groups}\label{sec:fg}

Using the library \texttt{sirocco}~\cite{mbrr:16} of \texttt{Sagemath}~\cite{sagemath} we have computed some fundamental groups. We will use particular curves for the computations. Details and links to the computations are given in the Appendix~\ref{sec:appendix}.

\begin{rem}
We fix as the \emph{triangle} $\mathcal{L}_1$ the one formed by the tangent lines to
the points $P_1,\gr{-2}P_1,\gr{4}P_1$, for which $\gr{3}P_1=P$.
With the notations in page~\pageref{not_triangulo}, the triangle 
$\mathcal{L}'_1$ is formed by the tangent lines to

the points

$P_{9,1},\gr{-2}P_{9,1},\gr{4}P_{9,1}$, also associated with $P$. The triangle
$\mathcal{L}_2$, associated to $\gr{2}P$, is formed by 
the tangent lines to $\gr{2}P_1,\gr{-4}P_1,\gr{8}P_1$.
Finally $\mathcal{L}_3$ is formed by 
the tangent lines to $P_{9,2},\gr{-2}P_{9,2},\gr{4}P_{9,2}$, 
for which $\gr{3}P_{9,2}\neq\gr{\pm 1}P$.
\end{rem}

The code for the proof of next proposition is in Remark~\ref{proof:abelian}.

\begin{prop}\label{prop:abelian}
Let $\mathcal{C}^3$ be the curve which is the union of $C$ and
the triangles $\mathcal{L}_1$ and $\mathcal{L}_3$.
Then $\pi_1(\PP^2\setminus\mathcal{C}^3)$ is abelian and hence
isomorphic to~$\mathbb{Z}^6$.
\end{prop}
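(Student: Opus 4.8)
The plan is to compute $\pi_1(\PP^2\setminus\mathcal{C}^3)$ explicitly and check that it is abelian; the identification with $\ZZ^6$ is then forced. Indeed, $\mathcal{C}^3$ has seven irreducible components, namely the cubic $C$ together with the three lines of $\mathcal{L}_1$ and the three lines of $\mathcal{L}_3$, of degrees $3,1,1,1,1,1,1$, so
\[
H_1(\PP^2\setminus\mathcal{C}^3;\ZZ)\cong\ZZ^{7}/\gr{(3,1,1,1,1,1,1)}\cong\ZZ^{6}.
\]
Since the Zariski--van Kampen method produces a finite presentation, the group $\pi_1(\PP^2\setminus\mathcal{C}^3)$ is finitely generated; hence, to identify it with $\ZZ^6$ it is enough to prove that it is abelian, as then it coincides with its abelianization $H_1(\PP^2\setminus\mathcal{C}^3)\cong\ZZ^6$.

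First I would fix an explicit model over a number field: a concrete smooth cubic $C$ with a concrete flex $O$, so $E=(C,O)$, and concrete points of order $9$ realizing $P_1$ and $P_{9,2}$ from the preceding remark, chosen so that $\gr{3}P_1$ and $\gr{3}P_{9,2}$ generate $E[3]$. From the coordinates of these points one writes down the six tangent lines forming $\mathcal{L}_1$ and $\mathcal{L}_3$ and hence an explicit defining polynomial $F$ of degree $9$ for $\mathcal{C}^3$. On this model one checks that the singular locus of $\mathcal{C}^3$ is exactly as dictated by the combinatorics: the points where $C$ is tangent to one of the six lines, the three vertices of each triangle (where two of its lines cross on $C$), and the transverse crossings of a line of $\mathcal{L}_1$ with a line of $\mathcal{L}_3$ away from $C$; in particular no three of the six lines are concurrent and no line of one triangle passes through a vertex of the other.

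Next I would run the Zariski--van Kampen algorithm as implemented in \texttt{sirocco} inside \texttt{Sagemath}: after a generic linear change of coordinates the algorithm produces a certified computation of the braid monodromy of $F$ along a generic pencil of lines, hence a finite presentation of $\pi_1(\CC^2\setminus\{F=0\})$ for a generic affine chart; incorporating the meridian relation carried by the line at infinity yields a presentation of $\pi_1(\PP^2\setminus\mathcal{C}^3)$. One then simplifies this presentation (Tietze transformations, via the \texttt{GAP} interface) and verifies that the resulting group is abelian, equivalently that modulo the commutators of the generators every defining relator becomes trivial. Combined with the computation of $H_1$ above, this gives $\pi_1(\PP^2\setminus\mathcal{C}^3)\cong\ZZ^6$. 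The explicit equations and the code are recorded in Remark~\ref{proof:abelian} and Appendix~\ref{sec:appendix}, together with the \texttt{Binder} links where the computation can be re-run.

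The delicate point is the trustworthiness of the braid-monodromy step: $\{F=0\}$ has degree $9$ with several non-nodal singularities, in particular the tacnodes produced by the tangencies of $C$ with the six lines, so a naive numerical root-tracking could collide or interchange discriminant points and return a wrong group. This is precisely what \texttt{sirocco} is designed to avoid, since it performs a certified (interval-arithmetic) path tracking of the roots of the discriminant and of the resulting braids, so that the output presentation is rigorous. The remaining care goes into checking that the chosen projection is generic for $\mathcal{C}^3$ (distinct discriminant values, reduced generic fibre, no component contained in a fibre of the pencil or through its centre) and that the numerical curve genuinely realizes the combinatorics of $\mathcal{C}^3$; both verifications are carried out in the accompanying code.
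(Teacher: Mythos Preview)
Your proposal is correct and follows essentially the same approach as the paper: the paper's proof consists precisely of the \texttt{Sagemath}/\texttt{sirocco} computation recorded in Remark~\ref{proof:abelian}, applied to the explicit model $C:x^2y+y^2z+z^2x=0$ with $\mathcal{L}_1=\{xyz=0\}$ and an explicit equation for $\mathcal{L}_3$, yielding a presentation that simplifies to an abelian group. Your write-up merely makes explicit the standard justifications (the $H_1$ computation forcing $\ZZ^6$, the certified nature of \texttt{sirocco}'s path-tracking, and the genericity checks) that the paper leaves implicit in the code.
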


For the other curves we are going to consider the \emph{orbifold}
groups, see~\cite{acm:12} for definitions and properties. 
We may use a na{\"i}ve definition as follows. Let $X$ be a projective smooth surface
and let $D=\sum_{j=1}^r n_i D_i$ a divisor, $n_i>1$. This divisor defines an orbifold 
structure and its \emph{orbifold fundamental group} is defined as
\[
\pi_1^{\text{orb}}(X,D)=
\frac{\pi_1(X\setminus (D_1\cup\dots\cup D_r))}{\langle\mu_{D_i}^{n_i}=1, i=1,\dots,r\rangle},
\]
where $\mu_{D_i}$ is a meridian of $D_i$. We are going to study
the orbifold fundamental groups defined by the divisor defined 
by $3(\mathcal{L}_1+\mathcal{L}_1'+3 C)$ and 
$3(\mathcal{L}_1+\mathcal{L}_2+3 C)$.
To simplify the notations, let us denote 
by $w$ a meridian of $C$. For a specific choice, meridians of $\mathcal{L}_1$
will be denoted by $x_1,x_2,x_3$; 
meridians of $\mathcal{L}'_1$ by $y_1,y_2,y_3$
and meridians of $\mathcal{L}_2$ by $z_1,z_2,z_3$.
Let us define
\[
G_1:=\frac{\pi_1(\PP^2\setminus\mathcal{C}^1)}{\langle x_1^3,x_2^3,x_3^3,y_1^3,y_2^3,y_3^3,w^9
\rangle},\qquad 
G_2:=\frac{\pi_1(\PP^2\setminus\mathcal{C}^2)}{\langle x_1^3,x_2^3,x_3^3,z_1^3,z_2^3,z_3^3,w^9
\rangle}.
\]
Using \texttt{Sagemath} we obtain the following result.

\begin{prop}\label{prop:group}
The groups $G_1$ and $G_2$ are non-abelian of order~$3^8$.
The abelianization sequences are of the form 
\[
0\to\ZZ/3\to G_i\to(\ZZ/3)^5\times\ZZ/9\to 0
\]
which are central extensions. 
The derived subgroup is generated by $t:=[w,x_1]$.
For $G_1$, the following holds:
\begin{enumerate}[label=\rm($G_1$\arabic{enumi})]
 \item $t=[w,x_j]=[w,y_j]^{-1}$
 \item $t=[x_1,x_2]=[x_2,x_3]=[x_3,x_1]$
 \item $t=[y_1,y_2]=[y_2,y_3]=[y_3,y_1]$
 \item $[x_i,y_j]=1$
\end{enumerate}
For $G_2$, the following holds:
\begin{enumerate}[label=\rm($G_2$\arabic{enumi})]
 \item $t=[w,x_j]=[w,z_j]$
 \item $t=[x_1,x_2]=[x_2,x_3]=[x_3,x_1]$
 \item $t=[z_1,z_2]=[z_2,z_3]=[z_3,z_1]$
 \item $[x_i,z_j]=1$
\end{enumerate}
\end{prop}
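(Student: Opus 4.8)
The plan is to compute the presentations of $G_1$ and $G_2$ directly with \texttt{Sagemath}, starting from the presentations of $\pi_1(\PP^2\setminus\mathcal{C}^i)$ obtained via the \texttt{sirocco} library from explicit defining equations of the curves (the ones recorded in the Appendix), then adjoining the relators $x_j^3$, $y_j^3$ (resp. $z_j^3$) and $w^9$. Once the finite presentation is in hand, I would ask \texttt{Sagemath}/\texttt{GAP} to verify that each $G_i$ is finite of order $3^8$, compute the abelianization, and check that it is $(\ZZ/3)^5\times\ZZ/9$; the exact sequence $0\to\ZZ/3\to G_i\to(\ZZ/3)^5\times\ZZ/9\to0$ then follows once one checks that the derived subgroup $[G_i,G_i]$ has order $3$, which \texttt{GAP} can confirm by computing $|G_i^{\mathrm{ab}}|=3^7$ and $|G_i|=3^8$.

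Next I would identify the derived subgroup explicitly. Since $|[G_i,G_i]|=3$, it suffices to exhibit one nontrivial element; I would set $t:=[w,x_1]$ and check in the group that $t\ne1$ and $t^3=1$, so $[G_i,G_i]=\langle t\rangle\cong\ZZ/3$. To see that the extension is central I would verify that $t$ commutes with all generators $w,x_j,y_j$ (resp. $w,x_j,z_j$); this is a finite check in the presentation. Then the listed relations ($G_1$1)--($G_1$4) and ($G_2$1)--($G_2$4) are each a single commutator identity in a group of order $3^8$, so they reduce to evaluating finitely many commutators and comparing them to $t^{\pm1}$ or $1$; \texttt{Sagemath} handles this by rewriting each commutator as an element of the (small, explicitly enumerated) group and matching it against the known central element $t$. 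The sign difference between ($G_1$1), where $[w,y_j]=t^{-1}$, and ($G_2$1), where $[w,z_j]=t$, is exactly the arithmetic phenomenon ($P_{\mcL_1'}=T_1$ versus $P_{\mcL_2}=\gr{2}T_1$) surfacing at the level of fundamental groups, and is precisely what will be seen to force $G_1\not\cong G_2$ (hence reprove that $(\mcC^1,\mcC^2)$ is a Zariski pair): one would compare, e.g., the map $\langle w,x_\bullet\rangle\times\langle w,y_\bullet\rangle\to\langle t\rangle$ induced by commutator with $w$, an invariant not destroyed by any isomorphism.

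The computations themselves are routine once the curves are fixed; the genuine obstacle is getting a \emph{correct and manageable} presentation of $\pi_1(\PP^2\setminus\mathcal{C}^i)$ in the first place. The \texttt{sirocco} braid-monodromy computation requires an explicit cubic with the required $9$-torsion structure over a sufficiently small number field, together with explicit equations for the triangles' tangent lines; assembling these (as in Proposition~\ref{prop:curve_3_dd} and the Appendix) and then ensuring the Zariski--van Kampen presentation is simplified enough for \texttt{GAP} to recognize finiteness is the delicate part. Secondarily, one must make sure that the specific choice of meridians $w,x_j,y_j,z_j$ used in the relations matches the geometric meridians coming from the braid monodromy, so that the stated relations are about the intended loops; this is handled by tracking the generators through the van Kampen algorithm. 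After that, every assertion in the proposition is a decidable statement about an explicit finite group, and the proof is completed by exhibiting the corresponding \texttt{Sagemath} session (reproduced in Remark~\ref{proof:abelian} and the Appendix).
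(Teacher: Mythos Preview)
Your proposal is correct and matches the paper's approach: the proposition is stated in the paper as a computational fact obtained with \texttt{Sagemath}, with the braid-monodromy presentations computed via \texttt{sirocco} from the explicit equations in the Appendix, followed by finite group checks in \texttt{GAP}. One small correction: Remark~\ref{proof:abelian} contains the code for Proposition~\ref{prop:abelian} (the abelian case $\mathcal{C}^3$), while the computations for $G_1,G_2$ are in the subsequent remark and the notebooks \texttt{CubicTriangle1.ipynb}, \texttt{CubicTriangle2.ipynb}.
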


We can reprove Theorem~\ref{thm:main1}.

\begin{proof}[Second proof of Theorem~\ref{thm:main1}]
Let us prove that no homeomorphism $\Phi:\PP^2\to\PP^2$ can satisfy
that $\Phi(\mathcal{C}^i)=\mathcal{C}^j$, for $1\leq i<j\leq 3$.

It is clear if $j=3$, since $\pi_1(\PP^2\setminus\mathcal{C}^3)$
is abelian and $\pi_1(\PP^2\setminus\mathcal{C}^i)$, $i=1,2$, is not.

For the case $i=1$ and $j=2$ it follows from Proposition~\ref{prop:group}.
Since cubic and lines must be preserved, the homeomorphism must send meridians to meridians (or anti-meridians).

In particular it would induce an isomorphism $\Phi_*:G_1\to G_2$.
Since $G_i'$ is central, the following property holds for both groups.
Let $a_1,a_2,b_1,b_2\in G_i$ such that $a_1$ is conjugate to
$a_2$ and $b_1$ is conjugate to $b_2$. Then 
$[a_1,b_1]=[a_2,b_2]$. Then $\Phi_*$ sends $w$ to a conjugate of $w$
and $\{x_1,x_2,x_3,y_1,y_2,y_3\}$ to conjugates of
$\{x_1,x_2,x_3,z_1,z_2,z_3\}$ (in some order), and this is not possible.
\end{proof}

\begin{rem}
We have computed also the fundamental group for the Zariski pair of Theorem~\ref{thm:main2}. 

The fundamental group for $\mcC_5$ is abelian and the fundamental group for $\mcC_4$ is not abelian. However, the tuples of Corollary~\ref{cor:high} cannot be distinguished in this way since in this case, all fundamental groups are abelian.   

\end{rem}

\section{Modular  curves  and connectivity}\label{sec:modular}

The main goal of this section is to provide the tools to prove the connectivity Propositions~\ref{prop:connectivity}
and~\ref{prop:connectivity2}. The goal is to prove the connectedness
of the realization (or moduli) space of smooth cubic curves with some extra data
which is related with moduli stacks of elliptic curves with extra structures.
Most of these results are classical and can be found in several references \cite{serre,shioda72,shimura,cp:80,cox:82}
where the modular groups  are involved, as is the Weil pairing (see~\cite{silverman} for details).

Let $\mathfrak{h}$ be the upper half plane of $\mathbb{C}$, i.e. $\mathfrak{h}:=\{\tau\in\mathbb{C}\mid \Im(\tau)>0\}$, $\Gamma:=\Sl(2;\ZZ)$ 
and let $\mathfrak{G}:=\Gamma\ltimes\ZZ^2$ be the semidirect product 
defined by the right action of $\Gamma$ on $\ZZ^2$ 
(where elements in $\ZZ^2$ are row vectors).
Let us consider the action (see~\cite[p.580]{KII})
\begin{equation}\label{eq:action_h}
\begin{tikzcd}[row sep=0pt]
\mathfrak{G}
\times(\mathfrak{h}\times\CC)\ar[r]&
\mathfrak{h}\times\CC\\
\left(\left(
\begin{pmatrix}
a&b\\
c&d
\end{pmatrix}
,m,n\right),
(\tau,z)
\right)\ar[r,mapsto]&
\left(
\dfrac{a\tau+b}{c\tau+d},
\dfrac{z+m\tau+n}{c\tau+d}
\right).
\end{tikzcd}
\end{equation}
Let us denote $S_2'(\Gamma):=\mathfrak{G}\backslash(\mathfrak{h}\times\CC)$; the natural map 
$\pi_\Gamma:S_2'(\Gamma)\to\CC_{2,3}$ is an orbifold fibration where each
fiber is the quotient of an elliptic curve by the action of $-1$, i.e.,
an orbifold based on $\PP^1$ with four points of indices~$2$. 
For $\tau\in\mathfrak{h}$, let  $L_\tau:=\ZZ \tau+\ZZ$  and $E_\tau =\CC/L_\tau$. Then the  fiber $\pi_\Gamma^{-1}(j(\tau))$
is identified with $Q_{j(\tau)}=Q_{[\tau]}:=E_\tau/\pm 1$.


This action restricts to subgroups of $\Gamma$, in particular,
\begin{gather*}
\Gamma(N):=\left\{\gamma\in\Gamma
|
\gamma\equiv
I_2\bmod{N}
\right\},
\\ 
\Gamma_m(N):=\left\{\gamma\in\Gamma
\vphantom{\begin{pmatrix}
1&0\\
0&1
\end{pmatrix}
}
\right|
\left.
\gamma\equiv
\begin{pmatrix}
1&b\\
0&1
\end{pmatrix}
\bmod{N},
b\equiv 0\bmod{m}
\right\}
\end{gather*}
for $N>2$, $1\leq m<N$, $m | N$.
For each $G=\Gamma(N),\Gamma_1(N)$, we denote by $PG$ its image in $P\Gamma :=\psl(2;\ZZ)$. Note that $\Gamma(N)$ and $\Gamma_1(N)$ are isomorphic to $P\Gamma(N)$ and $P\Gamma_1(N)$ respectively. Furthermore, let
$\mathfrak{G}(G):=G\ltimes\ZZ^2$, $X'(G):=G\backslash\mathfrak{h}$, $S'(G):=\mathfrak{G}(G)\backslash(\mathfrak{h}\times\CC)$.
We obtain the finite covers $X'(G)\to\CC_{2,3}$ and the elliptic fibrations $S'(G)\to X'(G)$, which are nothing but the elliptic fibrations obtained by removing the singular fibers of type $I_b$ and $I_b^\ast$ from the  modular surfaces $S(G)\rightarrow X(G)$ attached to $G$. (See \cite{KII} for the types of singular fibers.)
The maps $\pi_G:S'(G)\to S_2'(\Gamma)$ restricted to each fiber 
are of the type $E_\tau\to Q_{[\tau]}$.

The Mordell-Weil group of $S(\Gamma(N))$ is $(\ZZ/N)^2$~\cite[Thm.~5.5]{shioda72}.
For each $(k,\ell)\bmod{N}$ we have a section $\sigma_{k,\ell}: X(\Gamma(N)) \rightarrow S(\Gamma(N))$ defined by
\[
\begin{tikzcd}
\tau\bmod{\Gamma(N)}\ar[r,mapsto, "\sigma_{k,\ell}"]&
\left(\tau,\dfrac{k \tau+\ell}{N}\right)\bmod{\mathfrak{G}(\Gamma(N))}.
\end{tikzcd}
\]
Let us describe how these maps are related with the moduli spaces 
of cubics with some torsion structure added. 

The first ingredient is the Weil pairing of the $N$-torsion of an elliptic curve~$E$,
see~\cite[\S III.8]{silverman} for details. It is an antisymmetric
non-degenerated bilinear pairing
\[
\weil{N}^E:E[N]\times E[N]\to\mu_N:=\{\zeta\in\CC^*\mid \zeta^N=1\}.
\]
Note that $(u,v)\in E[N]\times E[N]$ is a basis of $E[N]$ if and only if 
$\weil{N}^E(u,v)$ is a primitive root of unity. Moreover 
if $(u\ v)=(u_1\ v_1)A$, $A\in\gl(2;\ZZ/N)$, then 
$\weil{N}^E(u,v)=\exp(\frac{2i\pi\det A}{N})\weil{N}^E(u_1,v_1)$.
In particular $\weil{N}^E(-u,-v)=\weil{N}^E(u,v)$.

The second ingredient concerns the indices of the above subgroups.
Note that $\lvert\Gamma:\Gamma(N)\rvert=\lvert\Sl(2;\ZZ/N)\rvert$ since the natural homomorphism $\Sl(2;\ZZ)\to\Sl(2;\ZZ/N)$ is surjective by \cite[Lemma~1.38]{shimura}. 
The number of elliptic curves with the same $j$-invariant in the fibers of $S'(\Gamma(N))$ is $\frac{1}{2}\#\Sl(2;\ZZ/N)$. 

Let $p=\tau \bmod\Gamma(N)$ in $X'(\Gamma(N))$, and let $(\ell\bmod N)\in(\ZZ/N)^\times$. 
We may assume that $\weil{N}^{E_\tau}(P_1,Q_1)=\zeta_N$, where $P_1:=\frac{\tau}{N}\bmod L_\tau$, $Q_1:=\frac{1}{N}\bmod L_\tau$, and $\zeta_N:=\exp(\frac{2i\pi}{N})$. 
Put a map $\weil{N, \ell}:X'(\Gamma(N))\to\mu_N$ as 
\[ \weil{N,\ell}(p):=\weil{N}^{E_\tau}(\sigma_{1,0}(p),\sigma_{0,\ell}(p)) \]
which is continuous and hence constant. 
The sections $\sigma_{1,0}, \sigma_{0,\ell}$ give a morphism 
\[\bar{\sigma}_{\ell}:=(\sigma_{1,0},\sigma_{0,\ell}):X'(\Gamma(N))\to S'(\Gamma(N))\times_{X'(\Gamma(N))}S'(\Gamma(N)). \]
The image $\bar{\sigma}_\ell(p)$ corresponds to the ordered basis $(\sigma_{1,0}(p),\sigma_{0,\ell}(p))$ of $E_\tau[N]$ with $\weil{N}^{E_\tau}(\sigma_{1,0}(p),\sigma_{0,\ell}(p))=\zeta_N^\ell$. 
Conversely, let $(P,Q)$ be an ordered basis of $E_\tau[N]$ with $\weil{N}^{E_\tau}(P,Q)=\zeta_N^\ell$. 
For $m\in\ZZ$ with $\ell m\equiv1\bmod N$,  $(P,\gr{m}Q)$ is an ordered basis of $E_\tau[N]$ with $\weil{N}^{E_\tau}(P,\gr{m}Q)=\zeta_N$. 
 Choose integers $a, b, c, d\in\ZZ$ such that $P,\gr{m}Q$ corresponds to $\frac{a\tau+b}{N}, \frac{c\tau+d}{N}$, respectively. 
 Note that $\weil{N}^{E_\tau}(P,\gr{m}Q)=\zeta_N$ if and only if $ad-bc\equiv 1\bmod{N}$. 
 Since the natural homomorphism $\Sl(2;\ZZ)\to\Sl(2;\ZZ/N)$ is surjective by \cite[Lemma~1.38]{shimura}, there exists $A\in\Sl(2;\ZZ)$ such that $(a\tau+b, c\tau+d)\equiv(\tau, 1)A \bmod NL_\tau$. 
 Thus $(P,\gr{m}Q)$ corresponds to $(\frac{z_1}{N}, \frac{z_2}{N}) \bmod L_\tau$, where $(z_1, z_2):=(\tau, 1)A$. 
 By dividing $Nz_2$, the basis $(P,\gr{m}Q)$ is $(\frac{A\cdot\tau}{N}, \frac{1}{N}) \bmod{L_{A\cdot\tau}}$. 
 Hence $(P,Q)$ corresponds to an image of the section $\sigma_{0,\ell}$. 
Hence, we can identify $X'(\Gamma(N))$ with the moduli
space of elliptic curves with bases of the $N$-torsion (up to $\pm 1$) with
fixed Weil pairing.

Hence, the moduli of elliptic curves with bases of the $N$-torsion up to sign
with fixed Weil pairing is isomorphic to $X'(\Gamma(N))$.
Similar arguments apply to $X'(\Gamma_m(N))$.
The space $X'(\Gamma_1(N))$ is the moduli space of elliptic
curves with a point of torsion order~$N$, up to sign. 
Note that the covering $X'(\Gamma(N))\to X'(\Gamma_1(N))$
can be seen as a flag map, where an element of $X'(\Gamma(N))$ seen
as a pair $(E,\pm(P,Q))$, where $P,Q$ is a basis of $E[N]$ with Weil pairing
$\exp\frac{2i\pi}{N}$, project to $(E,\pm P)\in X'(\Gamma_1(N))$.
The space $X'(\Gamma_m(N))$ is the moduli space of 
elliptic curves with pairs of elements of order~$N$ (up to sign)
with fixed Weil pairing which has order $m$.

Let us denote by ${\Sigma}_3$ the space of smooth cubics in $\PP^2$. It is well-known that it is a Zariski-open set of a projective space of dimension~$9$ and hence connected.
Let us denote by
\[
\Sigma_3^0:=\{E:=(C,O)\in\Sigma_3\times\PP^2\mid O\text{ is a flex of }E\}.
\]
This space is a $9$-fold unramified covering of $\Sigma_3$. In order to check that $\Sigma_3^0$ is connected, it is enough
to connect the fibers of any point. Let $C$ be the cubic of equation $x^3+y^3+z^3=0$. Its flexes are 
$[-1:0:\alpha]$, $[0:-1:\alpha]$, $[-1:\alpha:0]$, $\alpha^3=1$. There is a subgroup of projective transformations fixing $C$ 
and acting transitively on the set of flexes. Since $\pgl(3;\CC)$ is connected, the result follows.
We are interested in the connectedness properties of several covers of $\Sigma_3^0$:
\begin{gather*}
\Sigma_3^{N}:=\{(\overbrace{C,O}^{E},P)\in\Sigma_3^0\times\PP^2\mid E\in\Sigma_3^0, P\in E[N], P\notin E[N']\text{ if }N'\text{ divides }N\},\\
\Sigma_3^{N,N}:=\{(\underbrace{C,O}_{E},P,Q)\in\Sigma_3^0\times(\PP^2)^2\mid E\in\Sigma_3^0, P,Q \text{ basis of }E[N]
\},\\
\Sigma_3^{N,N}(\zeta):=\{(C,O,P,Q)\in\Sigma_3^{N,N}\mid \weil{N}^E(P,Q)=\zeta\},
\end{gather*}
where $\zeta$ is a primitive $N$-th root of unity. And for $m$ such that $m | N$ and an primitive $m$-th root if unity $\xi$,
\begin{gather*}
\Sigma_3^{N,m}\!:=\!\!\{\!(\underbrace{C,O}_{E},P,Q)\!\in\!\Sigma_3^0\!\times\!(\PP^2)^2\!\!\mid\!\! (E,P)\!\in\!\Sigma_3^{N}, (E,Q)\!\in\!\Sigma_3^{m}, \gr{\scriptstyle\frac{N}{m}}P,Q \text{ basis of }E[m]
\},\\
\Sigma_3^{N,m}(\xi):=\{(C,O,P,Q)\in\Sigma_3^{N,m}\mid \weil{N}^E(P,Q)=\xi\}.
\end{gather*}
There is a natural involution $(E,P,Q)\mapsto(E,-P,-Q)$, fixing the Weil pairing, and let us denote $\Lambda_3^{N,N}$, $\Lambda_3^{N,N}(\zeta)$, the quotients
by this involution.

\begin{lem}\label{lemma:2}
The spaces $\Sigma_3^{N,N}$ and $\Lambda_3^{N,N}$ have the same number of connected components.
The same happens for $\Sigma_3^{N,N}(\zeta)$ and $\Lambda_3^{N,N}(\zeta)$, where $\zeta$ is a primitive $N$-th root of unity.
\end{lem}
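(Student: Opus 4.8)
The plan is to prove that the quotient maps $q\colon\Sigma_3^{N,N}\to\Lambda_3^{N,N}$ and $q\colon\Sigma_3^{N,N}(\zeta)\to\Lambda_3^{N,N}(\zeta)$ by the involution $\iota\colon(C,O,P,Q)\mapsto(C,O,-P,-Q)$ are bijective on connected components. Since each $q$ is a continuous surjection it is automatically surjective on $\pi_0$, so the whole point is to show that $\iota$ maps every connected component of $\Sigma_3^{N,N}$ (resp.\ of $\Sigma_3^{N,N}(\zeta)$) to itself. Once this is known the conclusion is formal: for a component $\bar A$ of $\Lambda_3^{N,N}$ the preimage $q^{-1}(\bar A)$ is an $\iota$-stable union of components of $\Sigma_3^{N,N}$ whose quotient $q^{-1}(\bar A)/\iota=\bar A$ is connected, forcing $q^{-1}(\bar A)$ to consist of a single component; hence $\bar A\mapsto q^{-1}(\bar A)$ is the required bijection, and likewise with~$\zeta$.

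To show that $\iota$ preserves components I would connect each $x=(C,O,P,Q)$ to $\iota(x)$ by a path, and first reduce to one convenient cubic. The forgetful map $\Sigma_3^{N,N}\to\Sigma_3^0$ is a finite covering, and $\Sigma_3^0$ was shown above to be connected; so, choosing a path in $\Sigma_3^0$ from $(C,O)$ to the Fermat cubic $C_F\colon x^3+y^3+z^3=0$ together with its flex $O_F=[1:-1:0]$ and lifting it, and then applying $\iota$ to the lift, it is enough to join $x_F$ to $\iota(x_F)$ for one point $x_F=(C_F,O_F,P_F,Q_F)$ over $(C_F,O_F)$.

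The key observation is that the coordinate transposition $\sigma\colon[x:y:z]\mapsto[y:x:z]$ is a projective automorphism of $C_F$ fixing $O_F$, and as an automorphism of the elliptic curve $(C_F,O_F)$ it is a nontrivial involution, hence equals multiplication by $-1$ (the automorphism group of a pointed elliptic curve is cyclic, with $-1$ its only element of order~$2$); in particular $\sigma$ acts as $-1$ on the $N$-torsion. Since $\pgl(3;\CC)$ is connected, I would pick a path $\sigma_t$ in $\pgl(3;\CC)$ with $\sigma_0=\mathrm{id}$ and $\sigma_1=\sigma$, and consider $t\mapsto\bigl(\sigma_t(C_F),\sigma_t(O_F),\sigma_t(P_F),\sigma_t(Q_F)\bigr)$. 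Each $\sigma_t$ is a linear change of coordinates carrying the pointed cubic $(C_F,O_F)$ isomorphically onto $(\sigma_t(C_F),\sigma_t(O_F))$, hence is an isomorphism of elliptic curves and sends the basis $(P_F,Q_F)$ of the $N$-torsion to a basis; so this is a genuine path in $\Sigma_3^{N,N}$. It starts at $x_F$, and since $\sigma$ fixes $(C_F,O_F)$ and acts by $-1$ on torsion it ends at $\iota(x_F)$. This finishes the reduction and proves that $\iota$ preserves every component of $\Sigma_3^{N,N}$.

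For the versions with a fixed Weil pairing, recall that $(C,O,P,Q)\mapsto\weil{N}^{E}(P,Q)$ is locally constant on $\Sigma_3^{N,N}$ (isomorphisms of elliptic curves preserve the Weil pairing), so $\Sigma_3^{N,N}(\zeta)$ is a union of connected components of $\Sigma_3^{N,N}$; moreover $\iota$ preserves it since $\weil{N}^{E}(-P,-Q)=\weil{N}^{E}(P,Q)$, and the path above stays in $\Sigma_3^{N,N}(\zeta)$ for the same reason. Hence $\iota$ preserves each component of $\Sigma_3^{N,N}(\zeta)$ too, and the first paragraph applies verbatim. The only delicate point in making this rigorous is the bookkeeping needed to justify that $\Sigma_3^{N,N}\to\Sigma_3^0$ is honestly a topological covering (so that path lifting is available) and that the explicit path constructed above takes values in $\Sigma_3^{N,N}$ and respects the Weil pairing; both are immediate from the moduli description of these spaces.
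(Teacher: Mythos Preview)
Your proof is correct and rests on the same key idea as the paper's: the involution $-1$ on a plane cubic with a chosen flex is realized by an element of $\pgl(3;\CC)$, and since $\pgl(3;\CC)$ is connected a path from the identity to that element yields a path in $\Sigma_3^{N,N}$ from any point to its image under~$\iota$. The paper carries this out slightly more directly: instead of first transporting to the Fermat cubic via path-lifting along the covering $\Sigma_3^{N,N}\to\Sigma_3^0$, it simply puts an arbitrary $(C,O)$ into Weierstrass form $y^2z=f_3(x,z)$ by a projective change of coordinates and observes that $[x:y:z]\mapsto[x:-y:z]$ is then the $-1$ map. Your reduction step is thus unnecessary (though not wrong), and in fact you could avoid invoking the covering property altogether, since any smooth cubic with a flex can be moved to your chosen model by an element of $\pgl(3;\CC)$ just as well as to Weierstrass form.
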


\begin{proof}
It is enough to check that in the $2$-fold cover 
$\Sigma_3^{N,N}(\zeta)\to\Lambda_3^{N,N}(\zeta)$, the two points of any fiber can be connected by a path in $\Sigma_3^{N,N}(\zeta)$. Pick up 
$E\in\Sigma_3^{0}$; after a change of coordinates we can assume that $C$
has equation $y^2 z=f_3(x,z)$ ($f_3$ square-free) and $O=[0:1:0]$. Multiplication by $-1$ is realized by the pojective transformation $[x:y:z]\mapsto[x:-y:z]$
and the result follows taking a path from the identity to this automorphism.
\end{proof}

\begin{thm}\label{thm:modular}
Let $\zeta$ be an $N$-th primitive
root of unity. Then, $\Sigma_3^{N,N}(\zeta)$ is connected.
\end{thm}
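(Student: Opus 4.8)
The plan is to identify $\Sigma_3^{N,N}(\zeta)$, up to the involution handled by Lemma~\ref{lemma:2}, with the analytic model $X'(\Gamma(N))$ coming from the modular surface $S(\Gamma(N))$, and then to deduce connectedness from the fact that the modular curve $X(\Gamma(N))$ is connected (it is an irreducible quasi-projective curve, the quotient $\Gamma(N)\backslash\mathfrak{h}$ of the connected space $\mathfrak{h}$). Concretely, first I would set up the classifying map: given a point $(C,O,P,Q)\in\Sigma_3^{N,N}(\zeta)$, the pair $E=(C,O)$ determines a point of the coarse moduli of elliptic curves, and via a choice of period lattice we may write $E\cong E_\tau$ for some $\tau\in\mathfrak{h}$; the ordered basis $(P,Q)$ of $E[N]$ with $\weil{N}^E(P,Q)=\zeta$ then picks out a well-defined point of $X'(\Gamma(N))$, using the identification established in the paragraph preceding Lemma~\ref{lemma:2} (moduli of elliptic curves with bases of the $N$-torsion up to sign with fixed Weil pairing $=X'(\Gamma(N))$). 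One must be careful that $\zeta$ need not equal $\zeta_N=\exp(2\pi i/N)$; but if $\zeta=\zeta_N^\ell$ with $\gcd(\ell,N)=1$, replacing $Q$ by $\gr{m}Q$ for $\ell m\equiv 1\bmod N$ reduces to the normalized case, and this substitution is an algebraic automorphism of $\Sigma_3^{N,N}(\zeta_N^\ell)\to\Sigma_3^{N,N}(\zeta_N)$, so without loss of generality $\zeta=\zeta_N$.

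Next I would check that this classifying map $\Sigma_3^{N,N}(\zeta)\to X'(\Gamma(N))$ is a continuous \emph{fibration} whose fibers are connected. The fiber over a point $[\tau]$ consists of all ways of realizing the abstract datum $(E_\tau,\pm(P_1,Q_1))$ as an actual smooth plane cubic with a flex marked as origin together with the two torsion points as actual points of $\PP^2$; this is exactly a $\pgl(3;\CC)$-orbit of embeddings (any two smooth plane cubics with the same $j$-invariant differ by a projective transformation, and such a transformation carrying one flex to another and respecting the group law is unique up to the stabilizer), hence connected since $\pgl(3;\CC)$ is connected. Combined with the fact that the base $X'(\Gamma(N))$ is connected (remove finitely many points, the cusps, from the connected Riemann surface $X(\Gamma(N))$), a standard argument — a continuous surjection with connected fibers from a space admitting local sections, or alternatively the long exact homotopy sequence of the fibration — gives that $\Sigma_3^{N,N}(\zeta)$ is connected. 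Finally, Lemma~\ref{lemma:2} is not even needed in this direction, but it confirms that $\Lambda_3^{N,N}(\zeta)$ is connected as well.

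The main obstacle, I expect, is the careful bookkeeping in the first two steps: making precise that the map to $X'(\Gamma(N))$ is well-defined (independent of the choice of period lattice / of the isomorphism $E\cong E_\tau$, which is where the ``up to $\pm1$'' enters and where the Weil-pairing normalization must be tracked), and verifying that it is a genuine fibration with the stated fibers rather than merely a set-theoretic bijection on components. The surjectivity onto $X'(\Gamma(N))$ — i.e. that \emph{every} elliptic curve with the given torsion data can be realized as a smooth plane cubic with a flex origin — is classical (embed by the complete linear system $|3O|$), and the identification of $X'(\Gamma(N))$ with this moduli problem has already been carried out in the excerpt, so those pieces can be invoked rather than reproved. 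Once connectedness of the base and of the fibers is in hand, the conclusion is immediate.
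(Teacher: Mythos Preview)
Your proposal is correct and is essentially the paper's approach: both reduce connectedness of $\Sigma_3^{N,N}(\zeta)$ to connectedness of $X'(\Gamma(N))$ via the observation that the fibres of the classifying map are $\pgl(3;\CC)$-orbits. The paper packages this slightly differently---it passes first to $\Lambda_3^{N,N}(\zeta)$ via Lemma~\ref{lemma:2}, applies Stein factorization to the $j$-map $\Lambda_3^{N,N}(\zeta)\to\CC_{2,3}$ to obtain an intermediate curve~$\mathcal{C}$, and then shows $\mathcal{C}\to X'(\Gamma(N))$ is birational by a degree count (invoking Katz--Mazur representability for the map), so that the ``connected fibres'' step is absorbed into the definition of Stein factorization rather than checked as a topological fibration property---but the underlying geometry is identical to what you describe.
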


\begin{proof}
By Lemma~\ref{lemma:2}, it is enough to prove that $\Lambda_3^{N,N}(\zeta)$ is connected.
The $j$-invariant induces a map of $\Lambda_3^{N,N}(\zeta)$ onto $\CC_{2,3}$ to which we apply Stein factorization to a quasi-projective curve~$\mathcal{C}$:
\begin{equation}\label{eq:main_diagram}
\begin{tikzcd}
S_2'(\Gamma)
\ar[ddd]
&&&&S'(\Gamma(N))
\ar[llll,"\pi_{\Gamma(N)}"]
\ar[ddd]
\\
&\Lambda_3^{N,N}(\zeta)
\ar[ul,"\tilde{j}"]
\ar[ddl,"j"]
\ar[dr,"f"]
&&\Sigma_3^{N,N}(\zeta)
\ar[ll]
\ar[ur,"\tilde{\Phi}"]
&\\
&&\mathcal{C}
\ar[dll,"f'"]
\ar[drr,"\Phi"]
&&\\
\mathbb{C}_{2,3}&&&&
\ar[llll]
X'(\Gamma(N))
\end{tikzcd}\end{equation}
The map $\Phi$ in \eqref{eq:main_diagram} is constructed using
the representability of the functor of the naive level~$N$ moduli problem,
see~\cite[Corollary~4.7.2]{katz-mazur}. 
Note that the composition $\Sigma_3^{N,N}\to\Lambda_3^{N,N}\to\mathcal{C}$ corresponds to the quotient of $\Sigma_3^{N,N}$ by the action of $\pgl(3;\CC)$. 
Thus the cover $f':\mathcal{C}\to\CC_{2,3}$ is of degree $\frac{1}{2}\sharp\Sl(2;\ZZ/N)$. 
Hence $\Phi:\mathcal{C}\to X'(\Gamma(N))$ is birational, and $\mathcal{C}$ is connected since $X'(\Gamma(N))$ is connected. 
Therefore $\Lambda_3^{N,N}$ and $\Sigma_3^{N,N}$ are connected. 
%
%
\end{proof}

\begin{cor}
 If $m|N$ and $\xi$
is an primitive $m$-root of unity, 
the spaces $\Sigma_3^{N,m}(\xi)$ are connected.\end{cor}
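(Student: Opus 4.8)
The plan is to deduce the connectedness of $\Sigma_3^{N,m}(\xi)$ from the connectedness of $\Sigma_3^{N,N}(\zeta)$ established in Theorem~\ref{thm:modular}, by exhibiting a surjection from a suitable $\Sigma_3^{N,N}(\zeta)$ onto $\Sigma_3^{N,m}(\xi)$ with connected fibres, or, alternatively, by running exactly the same Stein-factorization argument with $\Gamma(N)$ replaced by $\Gamma_m(N)$. First I would fix notation: given $m\mid N$ and a primitive $m$-th root of unity $\xi$, choose a primitive $N$-th root of unity $\zeta$ with $\zeta^{N/m}=\xi$. Recall from the discussion preceding the corollary that $X'(\Gamma_m(N))$ is the moduli space of elliptic curves equipped with a pair of elements of order $N$ (up to sign) whose Weil pairing has order exactly $m$; and that the (modular) surface $S'(\Gamma_m(N))\to X'(\Gamma_m(N))$ is obtained from $S(\Gamma_m(N))$ by deleting the singular fibres of type $I_b$ and $I_b^*$.

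The key steps, in order. First, observe that an element of $\Sigma_3^{N,m}(\xi)$ is by definition a tuple $(C,O,P,Q)$ with $(E,P)\in\Sigma_3^N$, $(E,Q)\in\Sigma_3^m$, $\gr{\frac{N}{m}}P$ and $Q$ a basis of $E[m]$, and $\weil{N}^E(P,Q)=\xi$; using antisymmetry and bilinearity of the Weil pairing one checks that this last condition is exactly the statement that the pair of order-$N$ elements $(P, \text{a suitable order-}N\text{ lift of }Q)$ pairs to something of order $m$, so that $\Sigma_3^{N,m}(\xi)$ maps to $X'(\Gamma_m(N))$ and, forgetting the cubic/flex data, this map is the quotient by $\pgl(3;\CC)$. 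Second, I would form the analogue of diagram~\eqref{eq:main_diagram}: apply Stein factorization to the $j$-map $\Lambda_3^{N,m}(\xi)\to\CC_{2,3}$, getting a quasi-projective curve $\mathcal{C}_m$, and use the representability of the level-$N$ (resp. level $\Gamma_m(N)$) moduli problem, \cite[Corollary~4.7.2]{katz-mazur}, to obtain a map $\mathcal{C}_m\to X'(\Gamma_m(N))$. Third, count degrees: the number of elliptic curves with a fixed $j$-invariant carrying the required $\Gamma_m(N)$-structure, up to isomorphism and up to sign, equals the degree of $X'(\Gamma_m(N))\to\CC_{2,3}$, which is $\frac{1}{2}\lvert\Gamma:\Gamma_m(N)\rvert$; since the $\pgl(3;\CC)$-quotient of $\Sigma_3^{N,m}(\xi)$ realizes exactly these, $\mathcal{C}_m\to X'(\Gamma_m(N))$ is birational. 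Fourth, since $X'(\Gamma_m(N))=\Gamma_m(N)\backslash\mathfrak{h}$ is the quotient of the connected space $\mathfrak{h}$, it is connected; hence $\mathcal{C}_m$ is connected, hence so are $\Lambda_3^{N,m}(\xi)$ and (by the same $\pm1$-realized-by-a-projective-transformation argument as in Lemma~\ref{lemma:2}) $\Sigma_3^{N,m}(\xi)$.

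I expect the main obstacle to be the bookkeeping in the third step — matching the degree of the geometric cover $\mathcal{C}_m\to\CC_{2,3}$ with the index $\frac{1}{2}\lvert\Gamma:\Gamma_m(N)\rvert$ — because one must be careful that the ``up to sign'' identification on the $\Sigma$-side corresponds precisely to passing from $\Gamma_m(N)$ to its image in $P\Gamma$, and that no extra automorphisms of generic $E$ intervene; once the degrees agree the birationality and the connectedness conclusion are formal. A cleaner alternative, which I would present if the direct argument gets heavy, is to note that the forgetful/multiplication map $\Sigma_3^{N,N}(\zeta)\to\Sigma_3^{N,m}(\xi)$, $(C,O,P,Q)\mapsto(C,O,P,\gr{\ell}Q)$ for an appropriate $\ell$ with $\weil{N}^E(P,\gr{\ell}Q)$ of order $m$, is surjective: every $\Gamma_m(N)$-structure lifts to a full level-$N$ structure because $\Sl(2;\ZZ)\to\Sl(2;\ZZ/N)$ is surjective by \cite[Lemma~1.38]{shimura}. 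A continuous surjective image of the connected space $\Sigma_3^{N,N}(\zeta)$ is connected, which finishes the proof immediately.
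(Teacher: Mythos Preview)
Your primary argument --- repeating the Stein-factorization proof of Theorem~\ref{thm:modular} with $\Gamma(N)$ replaced by $\Gamma_m(N)$ --- is exactly the paper's approach; its proof is the single line ``We follow the lines of the proof of Theorem~\ref{thm:modular} replacing $\Gamma(N)$ by $\Gamma_m(N)$.'' Your alternative surjection $\Sigma_3^{N,N}(\zeta)\to\Sigma_3^{N,m}(\xi)$ is also valid (taking $\ell=N/m$ and adjusting the preimage by $E[N/m]$ to fix the Weil pairing gives surjectivity), but the paper does not use it.
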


\begin{proof}
We follow the lines of the proof of Theorem~\ref{thm:modular} replacing 
$\Gamma(N)$ by $\Gamma_m(N)$.
\end{proof}

\begin{rem}
We have been following the ideas in \cite{cp:80} for $\Gamma_m(N)$.
\end{rem}

We are going to use the contents of this section for the following proofs.

\begin{proof}[Proof of Proposition{\rm~\ref{prop:connectivity}}] 
An element of $\Sigma_1$ is a curve $\mcC$ with a decomposition
$[\mcC]:=(C;\mcL,\mcL')$, $C$ a smooth cubic, and $\mcL,\mcL'$ triangles.
We have that $\Sigma_1$ is the union of three spaces $\Sigma_{1,j}$, $j=1,2,3$ corresponding to the three cases of Theorem~\ref{thm1}~\ref{thm1-1}, namely, 
\[
\mcC\!\in\Sigma_{1,1}\Leftrightarrow \bar{P}_{\mcL}\!=\!\bar{P}_{\mcL'},\quad 
\mcC\!\in\Sigma_{1,2}\Leftrightarrow \bar{P}_{\mcL}\!=\!2\bar{P}_{\mcL'},\quad 
\mcC\!\in\Sigma_{1,3}\Leftrightarrow \langle \bar{P}_{\mcL},\bar{P}_{\mcL'}\rangle\!=\!\pic^0(C)[3]. 
\]
By Remark~\ref{rem:rel-to-mthm}, we may consider the elliptic curve $E=(C,O)$ for a fixed inflectional point $O\in C$, and consider torsion points of $E$ instead of Pic-points. 

First let us study  $\Sigma_{1,1}$. Fix $\xi_3:=\exp\frac{2i\pi}{3}$ and consider the map
$\Sigma_3^{9,3}(\xi_3)\to\Sigma_{1,1}$ defined as follows. Consider $(C,O,P,Q)\in\Sigma_3^{9,3}(\xi_3)$;
its image is the curve defined by $C$ and the two triangles determined by $P$ and $P\dot{+}Q$.
Note that $P_{\mcL}=\gr{3}P$ and $P_{\mcL'}=\gr{3}(P\dot{+}Q)=\gr{3}P$, and the map is well-defined.

Pick up $\mcC\in\Sigma_{1,1}$ and let $P,P'$ be vertices of $\mcL,\mcL'$, $\gr{3}(P\dot{-}P')=O$
but $P\neq P'$, i.e. $Q:=P'-P$ is of order~$3$ in $E$. 
As they represent distinct triangles the subgroup of $E[9]$ generated by $P,P'$
is of order~$27$ and 
$\xi_3':=\weil{9}^E(P,P')$ is a primitive cubic root of unity.
Then $\weil{9}^E(P,Q)=\weil{9}^E(P,P')=\xi_3'$; switching
the points we have as Weil pairings $\xi_3',(\xi_3')^{-1}$ and one of them equals $\xi_3$.
So, either $(C,O,P,Q)$ or $(C,O,P',\dot{-}Q)$ are sent to $\mcC$. This proves
the connectivity of $\Sigma_{1,1}$

Next, let us consider $\Sigma_{1,2}$. Fix $\zeta_9:=\exp\frac{2i\pi}{9}$ and consider the map
$\Sigma_3^{9}(\zeta_9)\to\Sigma_{1,2}$ defined as follows. Consider $(C,O,P)\in\Sigma_3^{9}$;
its image is the curve defined by $C$ and the two triangles determined by $P$ and $\gr{8}P$.
Note that $P_{\mcL}=\gr{3}P$ and $P_{\mcL'}=\gr{6}P$, and the map is well-defined.

Pick up $\mcC\in\Sigma_{1,2}$ and let $P,P'$ be vertices of $\mcL,\mcL'$, $\gr{3}(P\dot{+}P')=O$.
If $P\dot{+}P'=0$, we see that $\mcC$ is in the image of the map.
If not, let $O':=\dot{-}(P\dot{+}P')\neq O$. Let $E^*:=(C,O')$. If we denote by $\gr{}^*$ and $\pm^*$ the operations in $E^*$
we have 
\[
P_1+^*P_2=P_1\dot{+}P_2\dot{-}O',\quad -^*P_1=O'\dot{-}P_1
\]
for $P_1,Q_1\in C$. Hence,
\[
P+^*P'=P\dot{+}P'\dot{-}O'=\gr{-2}O'=O'
\]
and $\mcC$ is obtained as the image of $(C,O',P)$ and the result follows.

Now we study  $\Sigma_{1,3}$. Fix $\zeta_9:=\exp\frac{2i\pi}{9}$ and consider the map
$\Sigma_3^{9,9}(\zeta_9)\to\Sigma_{1,3}$ defined as follows. Consider $(C,O,P,Q)\in\Sigma_3^{9,9}(\zeta_9)$;
its image is the curve defined by $C$ and the two triangles determined by $P$ and $Q$.

Pick up $\mcC\in\Sigma_{1,3}$ and let $P,P'$ be vertices of $\mcL,\mcL'$. Let $\zeta_9:=\weil{9}^E(P,P')$
which is a primitive $9$-root of unity. The other vertices of $\mcL$ are $\gr{7}P,\gr{4}P$ so
we obtain the Weil pairings $\zeta_9,\zeta_9^{7},\zeta_9^{4}$ and switching
$\mathcal{L}$ and $\mathcal{L}'$ we obtain its inverses. As all the primitive $9$-roots of unity 
are obtained in this way, we have the surjectivity and $\Sigma_{3,1}$ is connected.
\end{proof}

\begin{proof}[Proof of Proposition{\rm~\ref{prop:connectivity2}}]
Let $\overline{\Sigma}_2$ be the realization space of the curves $\mcC$ consisting of a smooth cubic $C$, two inflectional tangents
$L_1,L_2$
and a triangle $\mcL$. 
Note that $\Sigma_2$ is a Zariski open in $\overline{\Sigma}_2$ since the condition that lines are concurrent is a closed condition. 
Hence it is enough to prove that $\overline{\Sigma}_2$ has two connected components. 

Let $\mcC\in\overline{\Sigma}_2$. We follow the description in Remark~\ref{rem:not-pic0}\ref{rem:not-pic0_corr}.
We have that $\overline{\Sigma}_2$ is the union of two spaces $\overline{\Sigma}_{2,j}$, $j=1,2$ corresponding to the two cases of Theorem~\ref{thm1}~\ref{thm1-2} as in the proof of Proposition~\ref{prop:connectivity}. 
%
For $\mcC:=C+L_{T}+L_{T'}+\mcL\in\overline{\Sigma}_2$,
\begin{gather*}
\mcC\in\overline{\Sigma}_{2,1}\Leftrightarrow \bar{P}_{\mcL}=T-T' \text{ or } \bar{P}_{\mcL}=T'-T,\\
\mcC\in\overline{\Sigma}_{2,2}\Leftrightarrow \bar{P}_{\mcL}\text{ and }T-T'\text{ generate }\pic^0(C)[3].
\end{gather*}
For an elliptic curve $E=(C,O)$ and $T\in E[3]$, let us denote $L_T$ its tangent line and for $P\in E[9]$, we denote $\mcL_P$ the triangle having $P$ as a vertex.
The following maps (for $\xi_3=\exp\frac{2i\pi}{3}$)
\[
\begin{tikzcd}[row sep=0pt]
\Sigma_3^9\ar[r,"\theta_{2,1}"]&\overline{\Sigma}_{2,1}\\
(C,O,P)\ar[r,mapsto]&C+L_{\gr{3}P}+L_{\gr{-3}P}+\mcL_{P}\\[5pt]
\Sigma_3^{9,3}(\xi_3)\ar[r,"\theta_{2,2}"]&\overline{\Sigma}_{2,2}\\
(C,O,P,Q)\ar[r,mapsto]&C+L_{\gr{3}P}+L_{Q}+\mcL_{P}
\end{tikzcd}
\]
are well-defined. 
For $\mcC:=C+L_{T}+L_{T'}+\mcL_P\in\overline{\Sigma}_{2,1}$, we have $\theta_{2,1}(C,O,P)=\mcC$, where $O$ is the flex point of $C$ collinear with $T$ and $T'$. 
For $\mcC:=C+L_{T}+L_{T'}+\mcL_P\in\overline{\Sigma}_{2,2}$, either $(C,O,P,T')\in\Sigma_{3}^{9,3}(\xi_3)$ or $(C,O',P,T)\in\Sigma_{3}^{9,3}(\xi_3)$, where $O$ (resp. $O'$) is the flex point of $C$ such that $3(P-O)=T-O$ (resp. $3(P-O')=T'-O'$) in $\pic^0(C)$. If $(C,O,P,T')\in\Sigma_3^{9,3}(\xi_3)$ (resp. $(C,O',P,T)\in\Sigma_3^{9,3}(\xi_3)$), we have $\theta_{2,2}(C,O,P,T')=\mcC$ (resp. $\theta_{2,2}(C,O',P,T)=\mcC$). 
Hence $\theta_{2,1}$ and $\theta_{2,2}$ are surjective and the result follows. 
\end{proof}

\section*{Acknowledgement}
The authors thank K. Mitsui and M. Avenda{\~n}o for their helpful comments. 
They also thank the referee for carefully reading the paper and giving the suitable suggestions for improvements, especially
for \S\ref{sec:modular}. 

The first named author is partially supported by MTM2016-76868-C2-2-P and Gobierno de Arag{\'o}n (Grupo de referencia ``{\'A}lgebra y Geometr{\'i}a'') cofunded by Feder 2014-2020 ``Construyendo Europa desde Arag{\'o}n''.
The second named author is partially supported by Grant-in-Aid for Scientific Research C (18K03263). 
The fourth named author is partially supported by Grant-in-Aid for Scientific Research C (17K05205)

\appendix
\section{Appendix: Equations}\label{sec:appendix}

We provide the equations for members of the Zariski tuples in Theorem~\ref{thm:main1}
which have been used for the computations of their fundamental
groups. The details are in a notebook in the folder \texttt{9torsion}
of 
\\
\url{https://github.com/enriqueartal/TorsionDivisorsCubicZariskiPairs}.
\\
In order to do this, we work with the smooth cubic $C$ 
defined by $x^2 y+y^2 z+ z^2 x=0$. 
The points in $E[9]$, $E:=(C,O)$ for any choice of a flex $O$,   are computed
and their equations have coefficients in $\mathbb{K}:=\mathbb{Q}[\alpha]$
where the minimal polynomial of $\alpha$ is
\begin{gather*}
t^{18} - 9 \, t^{17} + 36 \, t^{16} - 69 \, t^{15} + 360 \, t^{13} - 993 \, t^{12} + 1287 \, t^{11} - 225 \, t^{10} - 2557 \, t^{9} + \\
5886 \, t^{8} - 7713 \, t^{7} + 6960 \, t^{6} - 4473 \, t^{5} + 2007 \, t^{4} - 588 \, t^{3} + 99 \, t^{2} - 9 \, t + 1.
\end{gather*}
This curve have been chosen since the points
$[1:0:0],[0:1:0],[0:0:1]$ are $9$-torsion points. Hence $x y z=0$ is the equation of the triangle $\mathcal{L}_1$.

We compute the flexes considering $C\cap\hess(C)$. The $3$-torsion and the above points generate
a subgroup $H\subset E[9]$ isomorphic to $\ZZ/9\times\ZZ/3$, which allows to compute equations 
for $\mathcal{L}_1'$, where the minimal polynomial of $\beta\in\mathbb{K}$ is $t^{2} - 3 t + 9$,
\[
x^{3} + y^{3} + z^{3} +\beta x y z=0,
\]
and $\mathcal{L}_2$,
\[
x^3 - 3 x y^2 + y^3 - 3 x^2 z - 3 x y z - 3 y z^2 + z^3=0.
\]
To obtain the triangle $\mathcal{L}_3$ we need to compute a generator system for $E[9]$
and we need to find a point in $E[9]\setminus H$. 

Let 
$P\neq O$ be another flex. 

We look for a conic 

$C_2$ such that $(C_2,E)_O=2$,
$(C_2,E)_P=1$, and $(C_2,E)_R=3$, 

for some point~$R$. We look for one such $C$ and $R$.
The triangle $\mathcal{L}_3$ is obtained starting from~$R$ and it has 
a long equation which can be found in Remark~\ref{proof:abelian}.
The equation has coefficients in a subfield of $\mathbb{K}$ generated by an element
with minimal polynomial
\[
t^6 + 84 t^5 + 2193 t^4 - 236 t^3 - 75 t^2 + 3 t + 1.
\]

Some of the computations are too heavy for a personal computer and we have used
the computer server of IUMA (Mathematical Institute of the University of Zaragoza) with a CPU Intel\textsuperscript{\tiny\textregistered} Xeon\textsuperscript{\tiny\texttrademark} CPU E5-2650 v4 at 2.20GHz using 16 cores.

\begin{rem}\label{proof:abelian}
The proof of Proposition~\ref{prop:abelian} is done using the following \texttt{Sagemath}
code in 11 minutes and 54 seconds.

\begin{python}
R0.<t0>=QQ[]
p0=t0^6 + 84*t0^5 + 2193*t0^4 - 236*t0^3 - 75*t0^2 + 3*t0 + 1
a0=p0.roots(QQbar)[0][0]
L.<u0>=NumberField(p0,embedding=a0)
S1.<x,y>=L[]
f=x^2*y+y^2+x
T1=x*y
T2=(u0)*x^3 + (11836/51219*u0^5 + 995026/51219*u0^4
+ 26024155/51219*u0^3 - 142942/7317*u0^2 - 272470/51219*u0
- 38320/51219)*x^2*y + (12833/17073*u0^5 + 1076900/17073*u0^4
+ 28052858/17073*u0^3 - 766769/2439*u0^2 - 391100/17073*u0 
+ 82429/17073)*x*y^2 + (4399/17073*u0^5 + 369490/17073*u0^4
+ 9644752/17073*u0^3 - 157303/2439*u0^2 - 474916/17073*u0 
+ 65078/17073)*y^3 + (59/271*u0^5 + 14924/813*u0^4
+ 392872/813*u0^3 + 27197/271*u0^2 - 12053/813*u0 - 2500/813)*x^2 
+ (-4399/17073*u0^5 - 369490/17073*u0^4 - 9644752/17073*u0^3 
+ 157303/2439*u0^2 + 457843/17073*u0 - 82151/17073)*x*y 
+ (25987/51219*u0^5 + 2177044/51219*u0^4 + 56497537/51219*u0^3 
- 2705659/7317*u0^2 + 840755/51219*u0 + 120014/51219)*y^2 
+ (-35768/51219*u0^5 - 3007628/51219*u0^4 - 78702035/51219*u0^3 
+ 216818/7317*u0^2 + 1045979/51219*u0 + 122189/51219)*x +
(-16550/17073*u0^5 - 1390304/17073*u0^4 - 36303170/17073*u0^3 
+ 521996/2439*u0^2 + 644213/17073*u0 - 29929/17073)*y + 1
C=Curve((f*T1*T2)(x=x+y))
g=C.fundamental_group()
A=[_.Tietze() for _ in g.simplified().relations()]
print(A)
\end{python}
\end{rem}

\begin{rem}
The computation of the groups of Proposition~\ref{prop:group} have been done using \texttt{Sagemath}.
The first part of the computation of $G_1$ has been done with the IUMA server, taking 1 minute 
and 45 seconds of CPU time, with the following code:

\begin{python}
R0.<t0>=QQ[]
p0=t0^2 - 3*t0 + 9
a0=p0.roots(QQbar)[0][0]
L.<u0>=NumberField(p0,embedding=a0)
S1.<x,y>=L[]
f=x^2*y+y^2+x
T1=x*y
T2=x^3 + y^3 + u0*x*y + 1
F=(f*T1*T2)(x=x+y)
from sage.schemes.curves import zariski_vankampen
disc = zariski_vankampen.discrim(F)
segs = zariski_vankampen.segments(disc)
tr1=[zariski_vankampen.braid_in_segment(F,*s) for s in segs]
print([_.Tietze() for _ in tr1])
\end{python}
The computation for $G_1$ is completed in the notebook \texttt{CubicTriangle1.ipynb} and the
whole computation for $G_2$ is in the notebook \texttt{CubicTriangle2.ipynb}.
Both are located in the folder \texttt{Triangles} of 
\\
\href{https://github.com/enriqueartal/TorsionDivisorsCubicZariskiPairs}%
{\texttt{https:\!/\!/\!github.com/enriqueartal/TorsionDivisorsCubicZariskiPairs}}
\\
and it can be checked following the suitable \texttt{Binder} link~\cite{binder}.
\end{rem}


\begin{thebibliography}{10}
\providecommand{\url}[1]{\texttt{#1}}
\providecommand{\urlprefix}{URL }
\expandafter\ifx\csname urlstyle\endcsname\relax
  \providecommand{\doi}[1]{doi:\discretionary{}{}{}#1}\else
  \providecommand{\doi}{doi:\discretionary{}{}{}\begingroup
  \urlstyle{rm}\Url}\fi

\bibitem{artal94}
E.~Artal, \textit{Sur les couples de {Z}ariski}, J. Algebraic Geom. \textbf{3}
  (1994), no.~2, 223--247.

\bibitem{acm:12}
E.~Artal, J.~Cogolludo-Agust{\'i}n, and D.~Matei, \textit{Orbifold groups,
  quasi-projectivity and covers}, J. Singul. \textbf{5} (2012), 33--47.
  \urlprefix\url{https://doi.org/10.5427/jsing.2012.5c}.

\bibitem{survey}
E.~Artal, J.~Cogolludo-Agust{\'i}n, and H.~Tokunaga, \textit{A survey on
  {Z}ariski pairs}, \textit{Algebraic geometry in {E}ast {A}sia---{H}anoi
  2005}, \textit{Adv. Stud. Pure Math.}, vol.~50, Math. Soc. Japan, Tokyo,
  2008. 1--100.

\bibitem{pic0}
E.~Artal et~al., \textit{Torsion divisors of plane curves and {Z}ariski pairs},
  2019. Available at arXiv:1910.06490.

\bibitem{bannai-tokunaga2019}
S.~Bannai and H.~Tokunaga, \textit{Zariski tuples for a smooth cubic and its
  tangent lines}, Proc. Japan Acad. Ser. A Math. Sci. \textbf{96} (2020),
  no.~2, 18--21. \urlprefix\url{https://doi.org/10.3792/pjaa.96.004}.

\bibitem{bgst}
S.~Bannai et~al., \textit{On the topology of arrangements of a cubic and its
  inflectional tangents}, Proc. Japan Acad. Ser. A Math. Sci. \textbf{93}
  (2017), no.~6, 50--53. \urlprefix\url{https://doi.org/10.3792/pjaa.93.50}.

\bibitem{cox:82}
D.~Cox, \textit{Mordell-{W}eil groups of elliptic curves over {${\bf C}(t)$}
  with {$p_{g}=0$} or {$1$}}, Duke Math. J. \textbf{49} (1982), no.~3,
  677--689. \urlprefix\url{http://projecteuclid.org/euclid.dmj/1077315384}.

\bibitem{cp:80}
D.~Cox and W.~Parry, \textit{Torsion in elliptic curves over {$k(t)$}},
  Compositio Math. \textbf{41} (1980), no.~3, 337--354.
  \urlprefix\url{http://www.numdam.org/item?id=CM_1980__41_3_337_0}.

\bibitem{degtyarev}
A.~Degtyar{\"e}v, \textit{On deformations of singular plane sextics}, J.
  Algebraic Geom. \textbf{17} (2008), no.~1, 101--135.


\bibitem{oka2}
C.~{Eyral} and M.~{Oka}, \textit{{Fundamental groups of the complements of
  certain plane non-tame torus sextics.}}, {Topology Appl.} \textbf{153}
  (2006), no.~11, 1705--1721.

\bibitem{ben-shi}
B.~Guerville-Ball\'{e} and T.~Shirane, \textit{Non-homotopicity of the linking
  set of algebraic plane curves}, J. Knot Theory Ramifications \textbf{26}
  (2017), no.~13, 1750089, 13.
  \urlprefix\url{https://doi.org/10.1142/S0218216517500894}.

\bibitem{binder}
Jupyter~et~al., \textit{Binder 2.0 - {R}eproducible, interactive, sharable
  environments for science at scale}, Proc. 17th Python in Science Conf.
  (2018). {\tt 10.25080/Majora-4af1f417-011}.

\bibitem{katz-mazur}
N.~Katz and B.~Mazur, \textit{Arithmetic moduli of elliptic curves},
  \textit{Annals of Mathematics Studies}, vol. 108, pp.xiv+514, Princeton
  University Press, Princeton, NJ, 1985, \doi{10.1515/9781400881710}.
  \urlprefix\url{https://doi.org/10.1515/9781400881710}.

\bibitem{KII}
K.~Kodaira, \textit{On compact analytic surfaces. {II}}, Ann. of Math. (2)
  \textbf{77} (1963), 563--626.
  \urlprefix\url{https://doi.org/10.2307/1970500}.

\bibitem{lmfdb}
T.~{LMFDB Collaboration}, \textit{The {L}-functions and modular forms
  database}, \url{http://www.lmfdb.org}, 2019, [Online; accessed 30 October
  2019].

\bibitem{mbrr:16}
M.~Marco and M.~Rodr{\'i}guez, \textit{{\emph{\texttt{SIROCCO}}}: A library for
  certified polynomial root continuation}, \textit{Mathematical Software - ICMS
  2016}, \textit{Lecture Notes in Comput. Sci.}, vol. 9725, Springer-Verlag,
  Berlin, 2016. 191--197.

\bibitem{serre}
J.-P. Serre, \textit{Cours d'arithm\'{e}tique}, \textit{Collection SUP: ``Le
  Math\'{e}maticien''}, vol.~2, pp.188, Presses Universitaires de France,
  Paris, 1970.

\bibitem{shimada2}
I.~{Shimada}, \textit{{Equisingular families of plane curves with many
  connected components.}}, {Vietnam J. Math.} \textbf{31} (2003), no.~2,
  193--205.

\bibitem{shimura}
G.~Shimura, \textit{Introduction to the arithmetic theory of automorphic
  functions}, pp.xiv+267, Publications of the Mathematical Society of Japan,
  No. 11. Iwanami Shoten, Publishers, Tokyo; Princeton University Press,
  Princeton, N.J., 1971. Kan\^{o} Memorial Lectures, No. 1.

\bibitem{shioda72}
T.~Shioda, \textit{On elliptic modular surfaces}, J. Math. Soc. Japan
  \textbf{24} (1972), 20--59.
  \urlprefix\url{https://doi.org/10.2969/jmsj/02410020}.

\bibitem{shirane2016}
T.~Shirane, \textit{A note on splitting numbers for {G}alois covers and
  {$\pi_1$}-equivalent {Z}ariski {$k$}-plets}, Proc. Amer. Math. Soc.
  \textbf{145} (2017), no.~3, 1009--1017.
  \urlprefix\url{https://doi.org/10.1090/proc/13298}.

\bibitem{silverman}
J.~Silverman, \textit{The arithmetic of elliptic curves}, \textit{Graduate
  Texts in Mathematics}, vol. 106, pp.xii+400, Springer-Verlag, New York, 1986,
  \doi{10.1007/978-1-4757-1920-8}.
  \urlprefix\url{https://doi.org/10.1007/978-1-4757-1920-8}.

\bibitem{sagemath}
{The Sage Developers}, \textit{{S}ageMath, the {S}age {M}athematics {S}oftware
  {S}ystem ({V}ersion 9.0)}, 2020. \url{https://www.sagemath.org}.

\bibitem{zariski}
O.~Zariski, \textit{On the problem of existence of algebraic functions of two
  variables possessing a given branch curve}, Amer. J. Math. \textbf{51}
  (1929), 305--328.

\end{thebibliography}

\end{document}